\providecommand{\tabularnewline}{\\}
\numberwithin{equation}{section}
\numberwithin{figure}{section}
\theoremstyle{definition}
\newtheorem{defn}{\protect\definitionname}[section]
\theoremstyle{remark}
\newtheorem{rem}{\protect\remarkname}[section]
\theoremstyle{plain}
\newtheorem{thm}{\protect\theoremname}[section]
\theoremstyle{plain}
\newtheorem{prop}{\protect\propositionname}[section]
\theoremstyle{plain}
\newtheorem{cor}{\protect\corollaryname}[section]
\theoremstyle{plain}
\newtheorem{lem}{\protect\lemmaname}[section]
\theoremstyle{definition}
\newtheorem{example}{\protect\examplename}[section]
\providecommand{\corollaryname}{Corollary}
\providecommand{\definitionname}{Definition}
\providecommand{\examplename}{Example}
\providecommand{\lemmaname}{Lemma}
\providecommand{\propositionname}{Proposition}
\providecommand{\remarkname}{Remark}
\providecommand{\theoremname}{Theorem}
\begin{document}
\title{Asymptotic universal moment matching properties of normal distributions}
\author{Xuan Liu\thanks{Nomura Securities, Hong Kong SAR. Email: \protect\href{mailto:chamonixliu@163.com}{chamonixliu@163.com}}}
\maketitle
\begin{abstract}
Moment matching is an easy-to-implement and usually effective method
to reduce variance of Monte Carlo simulation estimates. On the other
hand, there is no guarantee that moment matching will always reduce
simulation variance for general integration problems at least asymptotically,
i.e. when the number of samples is large. We study the characterization
of conditions on a given underlying distribution $X$ under which
asymptotic variance reduction is guaranteed for a general integration
problem $\mathbb{E}[f(X)]$ when moment matching techniques are applied.
We show that a sufficient and necessary condition for such asymptotic
variance reduction property is $X$ being a normal distribution. Moreover,
when $X$ is a normal distribution, formulae for efficient estimation
of simulation variance for (first and second order) moment matching
Monte Carlo are obtained. These formulae allow estimations of simulation
variance as by-products of the simulation process, in a way similar
to variance estimations for plain Monte Carlo. Moreover, we propose
non-linear moment matching schemes for any given continuous distribution
such that asymptotic variance reduction is guaranteed.
\end{abstract}

\section{\label{sec:}Introduction}

Monte Carlo simulation is a general and versatile method for solving
numerical integration problems, particularly when these problems can
be formulated as expectations of random variables. It has been widely
applied across various fields, including operations research, statistical
physics, and engineering. In finance, Monte Carlo methods have become
a valuable tool for financial derivatives pricing and risk management,
especially in the pricing of path dependent derivatives, for which
closed-form solutions typically do not exist. Compared to grid based
methods such as finite difference methods, Monte Carlo simulation
is relatively less susceptible to the curse-of-dimension. However,
given a fixed computational budget, its convergence rate deteriorates
as the problem dimension increases. Variance reduction is therefore
of particular importance when applying Monte Carlo simulation to problems
like financial derivatives pricing due to their high-dimension nature
arising from both the number of underlying assets and time discretization
of driving stochastic differential equations. Several variance reduction
techniques have been proposed and extensively studied in the literature,
including antithetic variables, control variate, moment matching,
importance sampling, quasi Monte Carlo, and others. Each of these
methods has its own strengths and limitations. For instance, antithetic
variable and moment matching are easy to implement. However, they
do not guarantee lower simulation variance for general integration
problems; their effectiveness depends on the properties of the underlying
random variable and the integrand. The control variate method, in
contrast, always reduces variance when the optimal weight is used.
But the extent of variance reduction achieved depends on the choice
of a suitable control variate, which typically requires some knowledge
of the integrand. Additionally, extra computation effort is needed
to obtain a satisfactory estimation of the optimal weight. Most of
the aforementioned methods exhibit the standard Monte Carlo convergence
rate of $\text{O}(N^{-1/2})$. In contrast, quasi Monte Carlo methods
often achieve a faster convergence rate: $\text{O}(N^{-3/2+\epsilon})$
convergence rate was proved in \cite{Ow97} for any $\epsilon>0$
provided that the integrand function is sufficiently smooth. This
improvement comes at the cost of introducing dependence among samples,
which complicates the accurate quantification of simulation error.
It is worth noting that moment matching also introduces sample dependence,
making variance estimation in moment matching Monte Carlo less straightforward
than in plain Monte Carlo. See \cite{FH83,BBG97,RSS85} for more discussions
on the antithetic variable method, and \cite{DS98,DGS01} for the
moment matching method. For discussions from the theoretical aspect
of quasi Monte Carlo methods, see \cite{Ow92,Ow97,OR21}. A systematic
and comprehensive discussion on variance reduction techniques is given
in \cite{Gla04}.

For a general Monte Carlo simulation system---such as a derivatives
pricing library used in the finance industry---it is important to
determine in advance whether applying a particular variance reduction
technique will indeed lower the simulation variance. Accurate estimation
of simulation error is also critical in finance, not only for practical
reliability but because it is a regulatory requirement. In this paper,
we are concerned with the variance reduction property of moment matching
methods for general integrand functions, and efficient estimation
of simulation error. Our main results are Theorem \ref{thm:}, Theorem
\ref{thm:-1}, Proposition \ref{prop:-1}, and Proposition \ref{prop:-2}.
Theorem \ref{thm:} and Theorem \ref{thm:-1} state that moment matching
with respect to an underlying distribution always asymptotically reduces
variance (i.e. when samples size is large) if and only if this distribution
is a normal distribution. Proposition \ref{prop:-1} and Proposition
\ref{prop:-2} provide efficient estimations of the simulation variances
for moment matching Monte Carlo when the underlying distribution is
normal. A direct implication of Theorem \ref{thm:} and Theorem \ref{thm:-1}
is that, for any continuous underlying distribution, applying a non-linear
moment matching scheme guarantees asymptotic variance reduction for
general integrand functions.

In the rest of this section, we introduce notations and definitions
employed throughout the paper. Suppose that $X=(X_{1},\dots,X_{n})^{\text{T}}$
is an $n$-dimensional random vector with $\mathbb{E}(|X|^{2})<\infty$
and non-singular covariance matrix. We denote its expectation by
\[
\mathbb{E}(X_{i})=\mu_{i},\;i=1,\dots,n,
\]
and its covariance matrix by
\[
\Sigma=(\Sigma_{ij})_{ij},\;\Sigma_{ij}=\text{Cov}(X_{i},X_{j}),\;1\le i,j\le n.
\]
Consider the problem of estimating the integral $\mathbb{E}[f(X)]$
for a given integrand function $f(x)$. The plain Monte Carlo method
proceeds by simulating $N$ i.i.d. samples $\{X(k):1\le k\le N\}$
from the distribution of $X$, and computing the sample mean $I_{N}=N^{-1}\sum_{k=1}^{N}f(X(k))$.
The estimator $I_{N}$ satisfies $\lim_{N\to\infty}I_{N}=\mathbb{E}[f(X)]$
a.s., $\mathbb{E}(I_{N})=\mathbb{E}[f(X)]$, and $\text{Var}(I_{N})=N^{-1}\text{Var}[f(X)]$.
When certain moments of $X$ are known in advance, the moment matching
Monte Carlo leverages this information to adjust the sample values.
By enforcing consistency with the known moments, one can hopefully
reduce the multiplicative constant in $\text{Var}(I_{N})=\text{O}(N^{-1})$,
thereby improving the estimator\textquoteright s efficiency. 

The following definitions of moment matching were first proposed and
tested by \cite{Bar95}. A similar method, known as empirical martingale
simulation, was proposed in \cite{DS98}. The empirical martingale
simulation method can be viewed as moment matching in the $\log$-space.
It is worth pointing out that, due to the presence of a drift term
in the exponential martingale, the empirical martingale simulation
partially matches the first and the second moments of the driving
Brownian motion. 
\begin{defn}
\label{def:-1}The \emph{first order moment matching} estimator is
given by
\begin{equation}
\tilde{I}_{N}^{(1)}=\frac{1}{N}\sum_{k=1}^{N}f(\tilde{X}^{(1)}(k)),\label{eq:-1}
\end{equation}
where 
\begin{align}
\tilde{X}^{(1)}(k) & =X(k)-\bar{X}+\mu,\label{eq:-31}
\end{align}
and
\begin{equation}
\bar{X}=\frac{1}{N}\sum_{k=1}^{N}X(k).\label{eq:-2}
\end{equation}
\end{defn}
Clearly, the sample mean of the first order moment matched samples
$\tilde{X}^{(1)}(k)$ equals the known expectation $\mu$. It is known
that, under certain technical assumptions, $\lim_{N\to\infty}\tilde{I}_{N}^{(1)}=\lim_{N\to\infty}\mathbb{E}(\tilde{I}_{N}^{(1)})=\mathbb{E}[f(X)]$
a.s. 
\begin{defn}
The \emph{second order moment matching} estimator is given by
\begin{equation}
\tilde{I}_{N}^{(2)}=\frac{1}{N}\sum_{k=1}^{N}f(\tilde{X}^{(2)}(k)),\label{eq:-32}
\end{equation}
where
\begin{equation}
\tilde{X}^{(2)}(k)=\Sigma^{1/2}\bar{\Sigma}^{-1/2}[X(k)-\bar{X}]+\mu,\label{eq:-33}
\end{equation}
and 
\begin{equation}
\bar{\Sigma}=\frac{1}{N}\sum_{k=1}^{N}X(k)X(k)^{\mathrm{T}}-\bar{X}\bar{X}^{\text{T}}.\label{eq:-34}
\end{equation}
\end{defn}
Both the sample mean and the sample variance of the second order moment
matched samples $\tilde{X}^{(2)}(k)$ match the known expectation
$\mu$ and variance $\Sigma$. Note that we adopt the biased estimator
(\ref{eq:-34}) for the sample variance $\bar{\Sigma}$ just for convenience.
The conclusions in this paper remain the same if an unbiased estimator
is used for $\bar{\Sigma}$. 

We are mainly concerned with the following asymptotic universal moment
matching property. Throughout this paper, the support of a function
$f$ will be denoted by $\text{supp}(f)$, and the interior of a subset
$E\subseteq\mathbb{R}^{n}$ will be denoted by $E^{o}$ .
\begin{defn}
\label{def:}A distribution $X$ with density function $p(x)$ is
said to have the \emph{(first order) asymptotic universal moment matching
property}, if 
\begin{equation}
\mathrm{Var}\Big[\frac{1}{N}\sum_{k=1}^{N}f(\tilde{X}^{(1)}(k))\Big]\le A_{N}\mathrm{Var}\Big[\frac{1}{N}\sum_{k=1}^{N}f(X(k))\Big]=\frac{A_{N}}{N}\cdot\mathrm{Var}[f(X)],\label{eq:-3}
\end{equation}
for any smooth function $f$ with compact support $\text{supp}(f)\subseteq\text{supp}(p)^{o}$,
where the constant $A_{N}$ satisfies 
\begin{equation}
\lim_{N\to\infty}A_{N}=1,\label{eq:-4}
\end{equation}
and the strict inequality 
\begin{equation}
\lim_{N\to\infty}N\mathrm{Var}\Big[\frac{1}{N}\sum_{k=1}^{N}f(\tilde{X}^{(1)}(k))\Big]<\mathrm{Var}[f(X)],\label{eq:-61}
\end{equation}
holds for some $f$. Similarly, $X$ is said to have the \emph{(second
order) asymptotic universal moment matching property}, if
\begin{equation}
\mathrm{Var}\Big[\frac{1}{N}\sum_{k=1}^{N}f(\tilde{X}^{(2)}(k))\Big]\le A_{N}\mathrm{Var}\Big[\frac{1}{N}\sum_{k=1}^{N}f(X(k))\Big]=\frac{A_{N}}{N}\cdot\mathrm{Var}[f(X)],\label{eq:-35}
\end{equation}
for any smooth function $f$ with compact support $\text{supp}(f)\subseteq\text{supp}(p)^{o}$,
where the constant $A_{N}$ satisfies (\ref{eq:-4}).
\end{defn}
\begin{rem}
\label{rem:-3}The condition (\ref{eq:-61}) is to exclude the less
interesting cases where first order moment matching is asymptotically
equivalent to plain Monte Carlo; as we will see in Section \ref{sec:-1},
this can only happens for uniform distributions. On the other hand,
for second order moment matching, as we will see by Example \ref{exa:-1}
in Section \ref{sec:-3}, such less interesting cases are automatically
eliminated by the condition (\ref{eq:-35}).
\end{rem}
We make some final comments on the requirement of $\text{supp}(f)\subseteq\text{supp}(p)^{o}$
in Definition \ref{def:}. Consider a distribution $X$ with density
$p(x)$ supported in the interval $B=(-1,1)$. The definition of $f$
outside of $B$ is immaterial for plain Monte Carlo since all samples
of $X$ will be in $B$. However, for moment matching Monte Carlo,
it is possible to have $\tilde{X}(k)\not\in B$, which means that
the variance of moment matching Monte Carlo depends on specific extensions
of $f$ outside of $\text{supp}(p)$. Therefore, it is natural to
restrict the integrand functions $f$ to those supported in the interior
of $\text{supp}(p)$ when considering asymptotic universal moment
matching properties. On the other hand, we will see that such restriction
can be removed once we have proved that normal distributions are the
only distribution satisfying the asymptotic universal moment matching
properties.

\section{\label{sec:-6}Main results}

We summarize the main results in this section. Proofs of the main
results will be deferred to Section \ref{sec:-1} and Section \ref{sec:-3}.
\begin{thm}
\label{thm:}(i) Suppose that $X$ is a normal distribution. Then
$X$ has the first order asymptotic universal moment matching property.
More specifically, for any smooth $f$ with compact support,
\begin{equation}
\mathrm{Var}\Big[\frac{1}{N}\sum_{k=1}^{N}f(\tilde{X}^{(1)}(k))\Big]=\frac{A_{N}}{N}\big(\mathrm{Var}[f(X)]-\mathbb{E}[\partial f(X)]\Sigma\,\mathbb{E}[\partial f(X)]^{\mathrm{T}}\big),\label{eq:-18}
\end{equation}
with $A_{N}=1+\mathrm{O}(N^{-1/2})$. 

(ii) Conversely, suppose that $X$ is a continuous distribution with
differentiable density $p(x)$. If $X$ satisfies the first order
asymptotic universal moment matching property, then $X$ is a normal
distribution.
\end{thm}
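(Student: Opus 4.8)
The plan is to prove both directions from a single expansion of the matched estimator around the plain one. Write $Y(k):=X(k)-\mu$ and $\bar Y:=N^{-1}\sum_{k=1}^{N}Y(k)$, so that $\tilde X^{(1)}(k)=X(k)-\bar Y$. Taylor's theorem gives $f(X(k)-\bar Y)=f(X(k))-\bar Y^{\mathrm T}\nabla f(X(k))+\rho_{k}$ with $|\rho_{k}|\le\min\{\tfrac12 C|\bar Y|^{2},\,2L|\bar Y|\}$, where $C=\sup|\partial^{2}f|$ and $L$ is the Lipschitz constant of $f$ (both finite since $f$ is smooth with compact support). Averaging over $k$ and replacing the empirical gradient mean $N^{-1}\sum_{k}\nabla f(X(k))$ by $g:=\mathbb{E}[\nabla f(X)]$ leads to
\[
\tilde I_{N}^{(1)}=\frac1N\sum_{k=1}^{N}h(X(k))+E_{N},\qquad h(x):=f(x)-g^{\mathrm T}(x-\mu),
\]
where $E_{N}$ equals $-\bar Y^{\mathrm T}\big(N^{-1}\sum_{k}\nabla f(X(k))-g\big)$ plus $N^{-1}\sum_{k}\rho_{k}$. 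Since $\mathbb{E}|X|^{2}<\infty$ gives $\bar Y=\text{O}_{p}(N^{-1/2})$ and $N^{-1}\sum_{k}\nabla f(X(k))\to g$ almost surely, both pieces of $E_{N}$ are $\text{o}_{p}(N^{-1/2})$, so $\sqrt N\,E_{N}\to 0$ in probability; moreover $\mathrm{Var}[h(X)]=\mathrm{Var}[f(X)]-2g^{\mathrm T}c_{f}+g^{\mathrm T}\Sigma g$, where $c_{f}:=\mathbb{E}[(X-\mu)f(X)]$.

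For part (i), a normal $X$ has moments of all orders, so the display upgrades to an $L^{2}$ statement: $\mathbb{E}[E_{N}^{2}]=\text{O}(N^{-2})$ — the remainder is dominated by $\tfrac12 C|\bar Y|^{2}$ with $\mathbb{E}|\bar Y|^{4}=\text{O}(N^{-2})$, and the other term is $\text{O}(N^{-2})$ by a direct index count — so that $\mathrm{Var}(\tilde I_{N}^{(1)})=N^{-1}\mathrm{Var}[h(X)]+\text{O}(N^{-3/2})$. Gaussian integration by parts (Stein's identity) gives $c_{f}=\Sigma g$, hence $\mathrm{Var}[h(X)]=\mathrm{Var}[f(X)]-g^{\mathrm T}\Sigma g=\mathrm{Var}[f(X)]-\mathbb{E}[\partial f(X)]\Sigma\,\mathbb{E}[\partial f(X)]^{\mathrm T}$; this is (\ref{eq:-18}) with $A_{N}=1+\text{O}(N^{-1/2})$, the degenerate case $\mathrm{Var}[h(X)]=0$ forcing $f\equiv 0$. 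Positive definiteness of $\Sigma$ yields (\ref{eq:-3})--(\ref{eq:-4}), and the strict inequality (\ref{eq:-61}) holds for any $f$ with $\mathbb{E}[\nabla f(X)]\neq 0$.

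For part (ii), suppose $X$ has differentiable density $p$ and the first order asymptotic universal moment matching property. By the expansion above and Slutsky's theorem, $\sqrt N(\tilde I_{N}^{(1)}-\mathbb{E}[f(X)])$ converges in law to $\mathcal{N}(0,\mathrm{Var}[h(X)])$; a careful bound on the $\text{O}(N^{-1})$ bias (splitting on $\{|\bar Y|\le\delta_{N}\}$ with $\delta_{N}\to 0$ slowly, using that $(N|\bar Y|^{2})_{N}$ is uniformly integrable) shows that recentering at $\mathbb{E}\tilde I_{N}^{(1)}$ does not change the limit law, so by Fatou's lemma $\liminf_{N}N\,\mathrm{Var}(\tilde I_{N}^{(1)})\ge\mathrm{Var}[h(X)]$. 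Together with the property's $\limsup_{N}N\,\mathrm{Var}(\tilde I_{N}^{(1)})\le\mathrm{Var}[f(X)]$, this gives $\mathrm{Var}[h(X)]\le\mathrm{Var}[f(X)]$, i.e.
\[
g^{\mathrm T}\Sigma g\le 2\,g^{\mathrm T}c_{f}\qquad\text{for all smooth }f\text{ with }\text{supp}(f)\subseteq\text{supp}(p)^{o}.
\]
The key step is to integrate by parts: since $f$ has compact support inside the open set $\text{supp}(p)^{o}$ on which $p$ is differentiable, $g=\mathbb{E}[\nabla f(X)]=-\int f\,\nabla p\,dx$, so both $f\mapsto g$ and $f\mapsto c_{f}$ are linear in $f$. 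Applying the inequality to functions $f$ with $c_{f}=0$ forces $g^{\mathrm T}\Sigma g\le 0$, hence $g=0$; so $f\mapsto g$ vanishes on the kernel of $f\mapsto c_{f}$, and as the latter maps onto $\mathbb{R}^{n}$ ($\Sigma$ being non-singular), there is a fixed matrix $K$ with $g=Kc_{f}$ for every $f$. Unwinding the integration by parts, $-\int f\,\nabla p=K\int(x-\mu)f\,p$ for all test $f$, whence $\nabla\log p(x)=-K(x-\mu)$ on $\text{supp}(p)^{o}$; the left side being a gradient forces $K=K^{\mathrm T}$, integration yields $p\propto\exp\!\big(-\tfrac12(x-\mu)^{\mathrm T}K(x-\mu)\big)$ on its support, and continuity of $p$ together with a boundary argument forces $\text{supp}(p)=\mathbb{R}^{n}$ and $p$ to be the $\mathcal{N}(\mu,K^{-1})$ density. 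Matching covariance matrices gives $K=\Sigma^{-1}$, so $X$ is the normal distribution $\mathcal{N}(\mu,\Sigma)$.

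I expect the main obstacle to be the asymptotics — establishing $N\,\mathrm{Var}(\tilde I_{N}^{(1)})\to\mathrm{Var}[h(X)]$, and the $\text{O}(N^{-1})$ bias bound, under only the second-moment hypothesis $\mathbb{E}|X|^{2}<\infty$: a naive treatment of the Taylor remainder would require a finite fourth moment of $\bar Y$, which need not exist, and this is exactly what the truncation and uniform-integrability arguments are meant to circumvent. The characterization step itself is short, the decisive point being that restricting to test functions with $c_{f}=0$ collapses the quadratic inequality and forces $g=0$, after which Stein's identity pins down the remaining constant.
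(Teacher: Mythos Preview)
Your proof of part (i) is essentially the paper's: both Taylor-expand $f(\tilde X^{(1)}(k))$ around $f(X(k))$, reduce to $\mathrm{Var}[h(X)]$ with $h(x)=f(x)-g^{\mathrm T}(x-\mu)$, and finish with Stein's identity $c_f=\Sigma g$ in the Gaussian case.

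Part (ii) is where you genuinely diverge. The paper, having derived (as you do) the inequality $g^{\mathrm T}\Sigma g\le 2\,g^{\mathrm T}c_f$ for all test $f$, keeps only the weaker consequence $g^{\mathrm T}c_f\ge 0$, then \emph{localizes}: it plugs in $f=a_1\phi_\epsilon(x_1-\cdot)+a_2\phi_\epsilon(x_2-\cdot)$, lets $\epsilon\to 0$, and observes that the resulting bilinear form $(a_1,a_2)\mapsto\big(\sum a_j\partial_i p(x_j)\big)\big(\sum a_j(\mu_i-x_{ij})p(x_j)\big)$ is sign-definite; a short Hilbert-space lemma (if $\langle\alpha,\beta_1\rangle\langle\alpha,\beta_2\rangle\ge 0$ for all $\alpha$ then $\beta_1,\beta_2$ are proportional) then yields the pointwise ODE $p'(x)=c(\mu-x)p(x)$. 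The multi-dimensional case is handled separately in an appendix, with $n-1$ auxiliary delta-bumps used to kill all but one coordinate of $c_f$. Your route is more global and linear-algebraic: you keep the full quadratic inequality, note that $c_f=0$ forces $g=0$, and conclude that the linear map $f\mapsto g$ factors through $f\mapsto c_f$ via a fixed matrix $K$, giving $\nabla\log p=-K(x-\mu)$ in one stroke for all $n$. This is cleaner and avoids both the approximate-identity machinery and the appendix; the paper's approach, on the other hand, is more hands-on and extracts the ODE from a strictly weaker inequality (only the sign of $g^{\mathrm T}c_f$).

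Two minor remarks. First, your anticipated ``main obstacle'' is milder than you suggest: since $|\rho_k|\le\tfrac12 C|\bar Y|^2$ and $\mathbb E|\bar Y|^2=N^{-1}\mathrm{tr}\,\Sigma$, the bias is $\mathrm O(N^{-1})$ without any truncation, and $\sqrt N E_N\to 0$ in probability follows directly; the Fatou step then gives $\liminf N\,\mathrm{Var}(\tilde I_N^{(1)})\ge\mathrm{Var}[h(X)]$ as you want. (The paper instead proves the full convergence $N\,\mathrm{Var}\to\mathrm{Var}[h(X)]$ by tracking $\mathrm o_2(N^{-1/2})$ remainders in $L^2$.) Second, you should make explicit that the strict-inequality clause (\ref{eq:-61}) in the definition is what rules out $K=0$ (the uniform case), paralleling the paper's use of Example \ref{exa:}; your ``boundary argument'' together with integrability then forces $K$ positive definite and $\mathrm{supp}(p)=\mathbb R^n$.
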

\begin{rem}
\label{rem:}Given a normal distribution, (\ref{eq:-18}) tells about
when (first order) moment matching gives strict variance reduction.
Let $F(x)=\mathbb{E}[f(x+X)]$ for any $x\in\mathbb{R}^{n}$. Then
(\ref{eq:-18}) implies that the first order moment matching strictly
reduces variance when $\partial F(0)=\mathbb{E}[\partial f(X)]\not=0$,
i.e. when $x=0$ is a non-critical point of $F(x)$.
\end{rem}
The variance formula (\ref{eq:-18}) can be written in a different
form which generalizes to rough integrand functions $f$ satisfying
a mild integrability condition.
\begin{prop}
\label{prop:-1}Suppose that $X$ is a normal distribution with zero
mean and non-singular $\mathrm{Var}(X)=\Sigma$. Let $f$ be a Lebesgue
measurable function such that $\mathbb{E}[(1+|X|)^{4}f(X)^{2}]<\infty$.
Then
\begin{equation}
\mathrm{Var}\Big[\frac{1}{N}\sum_{k=1}^{N}f(\tilde{X}^{(1)}(k))\Big]=\frac{A_{N}}{N}\big(\mathrm{Var}[f(X)]-\mathbb{E}[Xf(X)]^{\mathrm{T}}\Sigma^{-1}\mathbb{E}[Xf(X)]\big),\label{eq:-27}
\end{equation}
with $A_{N}=1+\mathrm{O}(N^{-1/2})$. 
\end{prop}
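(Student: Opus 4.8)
The plan is to establish (\ref{eq:-27}) by a direct second-moment computation that uses only the joint Gaussianity of the samples; this simultaneously reproves (\ref{eq:-18}) for smooth compactly supported $f$ and extends it to the rougher class at hand. Two preliminary reductions are convenient. First, for $f$ smooth with compact support the two formulae agree by Gaussian integration by parts: Stein's identity gives $\mathbb{E}[Xf(X)]=\Sigma\,\mathbb{E}[\partial f(X)]^{\mathrm T}$, hence $\mathbb{E}[Xf(X)]^{\mathrm T}\Sigma^{-1}\mathbb{E}[Xf(X)]=\mathbb{E}[\partial f(X)]\,\Sigma\,\mathbb{E}[\partial f(X)]^{\mathrm T}$, so it is enough to prove (\ref{eq:-27}). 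Second, writing $X=\Sigma^{1/2}\xi$ with $\xi\sim N(0,I_n)$ and $g(\xi):=f(\Sigma^{1/2}\xi)$, the hypothesis is equivalent to $\mathbb{E}[(1+|\xi|)^4 g(\xi)^2]<\infty$, the right-hand side of (\ref{eq:-27}) becomes $\mathrm{Var}[g(\xi)]-|\mathbb{E}[\xi g(\xi)]|^2$, and $\tilde I_N^{(1)}$ takes the same form with $\xi$ and $g$ in place of $X$ and $f$; so we may assume $\mu=0$, $\Sigma=I_n$ and $\tilde X^{(1)}(k)=\xi(k)-\bar\xi$ with $\bar\xi=N^{-1}\sum_k\xi(k)$.

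By exchangeability of $\{\xi(k)-\bar\xi\}_{k=1}^N$,
\[
N\,\mathrm{Var}[\tilde I_N^{(1)}]=\mathrm{Var}\big[g(\xi(1)-\bar\xi)\big]+(N-1)\,\mathrm{Cov}\big[g(\xi(1)-\bar\xi),\,g(\xi(2)-\bar\xi)\big].
\]
A short covariance computation shows that $(\xi(1)-\bar\xi,\xi(2)-\bar\xi)$ is centred Gaussian with $\mathrm{Var}(\xi(i)-\bar\xi)=c_N I_n$ and $\mathrm{Cov}(\xi(1)-\bar\xi,\xi(2)-\bar\xi)=-N^{-1}I_n$, where $c_N:=1-N^{-1}$; equivalently it has the law of $\sqrt{c_N}\,(W_1,W_2)$ for jointly Gaussian $W_1,W_2\sim N(0,I_n)$ with correlation $r_N:=-(N-1)^{-1}$, so $(N-1)r_N=-1$. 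Set $g_N:=g(\sqrt{c_N}\,\cdot\,)$, which lies in $L^2$ of the standard Gaussian since $\mathbb{E}[g_N(\xi)^2]\le c_N^{-n/2}\,\mathbb{E}[g(\xi)^2]<\infty$, and expand $g_N=\sum_{\alpha\in\mathbb{N}^n}\hat g_N(\alpha)H_\alpha$ in the orthonormal multivariate Hermite basis. Mehler's formula gives $\mathrm{Var}[g_N(W_1)]=\sum_{|\alpha|\ge1}\hat g_N(\alpha)^2$ and $\mathrm{Cov}[g_N(W_1),g_N(W_2)]=\sum_{|\alpha|\ge1}\hat g_N(\alpha)^2 r_N^{|\alpha|}$, hence
\[
N\,\mathrm{Var}[\tilde I_N^{(1)}]=\sum_{|\alpha|\ge1}\hat g_N(\alpha)^2\big(1+(N-1)r_N^{|\alpha|}\big)=\sum_{|\alpha|\ge2}\hat g_N(\alpha)^2\big(1+(N-1)r_N^{|\alpha|}\big),
\]
the first-chaos terms cancelling because $1+(N-1)r_N=0$.

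It remains to identify the right-hand side. For $|\alpha|\ge2$ we have $|(N-1)r_N^{|\alpha|}|\le(N-1)^{-1}$, so replacing the bracket by $1$ costs at most $(N-1)^{-1}\sum_{|\alpha|\ge2}\hat g_N(\alpha)^2\le(N-1)^{-1}\mathrm{Var}[g_N(\xi)]=\mathrm{O}(N^{-1})$; and $\sum_{|\alpha|\ge2}\hat g_N(\alpha)^2=\mathrm{Var}[g_N(\xi)]-|\mathbb{E}[\xi g_N(\xi)]|^2$ since the degree-one normalized Hermite polynomials are the coordinate functions. Finally the moments of $g_N(\xi)=g(\sqrt{c_N}\xi)$ are compared with those of $g(\xi)$ via the exact identity $\mathbb{E}[\psi(\sqrt{c_N}\xi)]=c_N^{-n/2}\,\mathbb{E}\big[\psi(\xi)\,e^{-|\xi|^2/(2(N-1))}\big]$ (change of variables) together with $0\le 1-e^{-|\xi|^2/(2(N-1))}\le|\xi|^2/(2(N-1))$: taking $\psi=g^2,\ g,\ \xi_i g$ yields $\mathrm{Var}[g_N(\xi)]=\mathrm{Var}[g(\xi)]+\mathrm{O}(N^{-1})$ and $\mathbb{E}[\xi g_N(\xi)]=\mathbb{E}[\xi g(\xi)]+\mathrm{O}(N^{-1})$, where the hypothesis $\mathbb{E}[(1+|\xi|)^4 g(\xi)^2]<\infty$ is precisely what bounds $\mathbb{E}[|\xi|^2 g(\xi)^2]$ and, through Cauchy--Schwarz, $\mathbb{E}[|\xi|^3|g(\xi)|]$. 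Collecting the estimates gives $N\,\mathrm{Var}[\tilde I_N^{(1)}]=\mathrm{Var}[g(\xi)]-|\mathbb{E}[\xi g(\xi)]|^2+\mathrm{O}(N^{-1})$, i.e.\ (\ref{eq:-27}) with $A_N=1+\mathrm{O}(N^{-1})$, a fortiori $A_N=1+\mathrm{O}(N^{-1/2})$; in the degenerate case where $\mathrm{Var}[f(X)]-\mathbb{E}[Xf(X)]^{\mathrm T}\Sigma^{-1}\mathbb{E}[Xf(X)]=0$, the function $f$ is affine a.e., $\tilde I_N^{(1)}$ is a.s.\ constant, and both sides of (\ref{eq:-27}) vanish.

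I expect the main obstacle to be the last step: transferring the Hermite data of the rescaled integrand $g_N$ back to that of $g$ when $g$ is only square-integrable against a polynomial weight, and controlling this transfer error uniformly in $N$. That is exactly what the fourth-moment assumption delivers; the remaining ingredients---the exchangeability identity, the two-sample Gaussian covariance, and Mehler's formula---are routine.
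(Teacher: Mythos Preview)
Your argument is correct and takes a genuinely different route from the paper's proof. The paper works on the product space $\mathbb{R}^N$: it introduces the partial shift $T(x)=(x_1-\bar x,x_2-\bar x,x_3,\dots,x_N)$, changes variables, and Taylor-expands the density ratio $p(T^{-1}X)/p(X)$ term by term to extract the $N^{-1}$ contributions to $\mathbb{E}[f(\tilde X^{(1)}(1))f(\tilde X^{(1)}(2))]$. Your approach instead exploits the fact that first-order moment matching is a \emph{linear} map, so $(\xi(1)-\bar\xi,\xi(2)-\bar\xi)$ stays jointly Gaussian; you then diagonalize via the Hermite basis and Mehler's formula. This makes the mechanism of variance reduction transparent --- the first Wiener chaos cancels exactly because $1+(N-1)r_N=0$ --- and it delivers a sharper remainder $A_N=1+\mathrm O(N^{-1})$ rather than the paper's $\mathrm O(N^{-1/2})$. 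The paper's change-of-variables technique, on the other hand, is more portable: it carries over (with an approximate inverse) to the second-order case of Proposition~\ref{prop:-2}, where the map $X(k)\mapsto\Sigma^{1/2}\bar\Sigma^{-1/2}(X(k)-\bar X)$ is nonlinear and the transformed pair is no longer Gaussian, so your Mehler argument would not apply directly there.
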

It is easily seen that, when $f$ is a smooth function with compact
support, (\ref{eq:-27}) follows from (\ref{eq:-18}) and integration
by parts. For rough integrand $f$, it is tempting to approximate
a rough integrand $f$ by smooth functions $f_{\epsilon}$ and apply
a density argument. However, such argument does not work because,
as we will see in Section \ref{sec:-1}, the coefficient $A_{N}$
in (\ref{eq:-18}) involves second order derivatives of $f$, which
is not bounded even under the $L^{1}$ norm. In Section \ref{sec:-1},
we will prove Proposition \ref{prop:-1} by a duality argument.

For the second order moment matching, we have parallel results.
\begin{thm}
\label{thm:-1}(i) Suppose that $X$ is normal distribution. Then
$X$ has the second order asymptotic universal moment matching property.
Moreover, for any smooth $f$ with compact support,
\begin{equation}
\begin{aligned}\mathrm{Var}\Big[\frac{1}{N}\sum_{k=1}^{N}f(\tilde{X}^{(2)}(k))\Big] & =\frac{A_{N}}{N}\Big(\mathrm{Var}[f(X)]-\mathbb{E}[\partial f(X)]\Sigma\,\mathbb{E}[\partial f(X)]^{\mathrm{T}}-\frac{1}{2}\mathrm{tr}\big[(\Sigma\,\mathbb{E}[\partial^{2}f(X)])^{2}\big]\Big)\end{aligned}
,\label{eq:-50}
\end{equation}
with $A_{N}=1+\text{O}(N^{-1/2})$.

(ii) Conversely, suppose that $X$ is a continuous distribution with
differentiable density $p(x)$. If $X$ satisfies the second order
asymptotic universal moment matching property, then $X$ is a normal
distribution.
\end{thm}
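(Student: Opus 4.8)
The plan is to parallel the proof of Theorem \ref{thm:}, carrying along the extra terms produced by the covariance rescaling $\Sigma^{1/2}\bar\Sigma^{-1/2}$. Throughout, take $\mu=0$ (by translation). For part (i): by the laws of large numbers and the central limit theorem, $\bar X=\mathrm{O}_p(N^{-1/2})$ and $\bar\Sigma-\Sigma=\mathrm{O}_p(N^{-1/2})$, so $M:=\Sigma^{1/2}\bar\Sigma^{-1/2}=I+M_1+\mathrm{O}_p(N^{-1})$, where $M_1$ is the image of $\bar\Sigma-\Sigma$ under the derivative of $A\mapsto\Sigma^{1/2}A^{-1/2}$ at $A=\Sigma$. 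Taylor-expanding $f(MX(k)-M\bar X)$ to second order about $f(X(k))$, averaging over $k$, and replacing the empirical averages of $\partial f$, $\partial^2 f$, $X_i\partial_j f(X)$ and of the quadratic monomials in $X$ by their expectations, I would show that to leading order $\tilde I_N^{(2)}$ equals the empirical mean of i.i.d.\ copies of
\[
Y \;=\; f(X)-\mathbb{E}[\partial f(X)]\,X+\mathrm{tr}\!\big(K(XX^{\mathrm T}-\Sigma)\big),
\]
where the symmetric matrix $K=K(\Sigma,\mathbb{E}[\partial^2f(X)])$ comes from $M_1$ together with the Gaussian identity $\mathbb{E}[X_i\partial_j f(X)]=(\Sigma\,\mathbb{E}[\partial^2f(X)])_{ij}$; thus $Y$ is $f(X)$ with its $L^2$-projection onto the mean-zero polynomials of degree $\le 2$ subtracted. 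Computing $\mathrm{Var}[Y]$ by first- and second-order Gaussian integration by parts and Wick's formula, and checking that the non-leading contributions only perturb the prefactor by $\mathrm{O}(N^{-1/2})$ (here the compact support and smoothness of $f$ bound the Taylor remainders uniformly), gives (\ref{eq:-50}). Since $\mathbb{E}[\partial f(X)]\Sigma\,\mathbb{E}[\partial f(X)]^{\mathrm T}\ge0$ and $\mathrm{tr}[(\Sigma\,\mathbb{E}[\partial^2f(X)])^2]=\mathrm{tr}[(\Sigma^{1/2}\mathbb{E}[\partial^2f(X)]\Sigma^{1/2})^2]\ge0$, the bound (\ref{eq:-35}) follows with $A_N\to1$.

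For part (ii), run the same expansion for a general continuous $X$ with differentiable density $p$: now $K=K(f)$ is a $p$-dependent linear functional of $f$, and $\lim_N N\,\mathrm{Var}[\tilde I_N^{(2)}]=\mathrm{Var}[Y]$ with $Y=f(X)-P_f(X)$, where $P_f$ is a mean-zero polynomial of degree $\le 2$ whose coefficients are linear functionals of $f$. The property (\ref{eq:-35}) forces the quadratic form $Q(f):=\mathrm{Var}[P_f(X)]-2\,\mathrm{Cov}[f(X),P_f(X)]$ to be negative semidefinite on $C_c^\infty$. Perturbing $f$ inside the kernel of $f\mapsto P_f$ leaves $P_f$ unchanged, so $Q$ is affine along such lines; vanishing of the linear coefficient gives $\mathrm{Cov}[h(X),P_f(X)]=0$ for $h$ in that kernel, and since $f\mapsto P_f$ surjects onto the mean-zero polynomials of degree $\le 2$ this yields
\[
\mathbb{E}[Xh(X)]=0,\qquad \mathbb{E}[XX^{\mathrm T}h(X)]=\mathbb{E}[h(X)]\,\Sigma,
\]
for every such $h$. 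Writing these vanishing conditions out (a linear functional vanishing on a finite-codimensional subspace is a combination of the defining functionals) and integrating by parts turns them into an overdetermined system of linear second-order PDEs for $p$, of the form $x\,p(x)=\mathcal C(\nabla p(x))+\mathcal D(\nabla^2 p(x))$ and $(xx^{\mathrm T}-\Sigma)p(x)=\mathcal E(\nabla p(x))+\mathcal G(\nabla^2 p(x))$ for fixed linear maps $\mathcal C,\mathcal D,\mathcal E,\mathcal G$. It then remains to show that this system forces $\nabla\log p$ to be affine, hence $p$ Gaussian; the crux is to prove $\mathcal D=0$, after which the first equation gives $\nabla\log p(x)=-\Sigma^{-1}x$ (using that $X$ has non-singular covariance), so $X\sim N(0,\Sigma)$, exactly as in Theorem \ref{thm:}(ii).

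The main obstacle is precisely this last step. The map $\mathcal D$ is a second-order Stein kernel of $p$, and killing it requires exploiting the negative semidefiniteness of $Q$ beyond what the above kernel perturbations see --- for instance by combining the two PDEs with the explicit $\Sigma$-dependent form of the composition $\bar\Sigma-\Sigma\mapsto M_1\mapsto K$, or by bootstrapping the moment relations to show that all moments of $X$ agree with those of $N(0,\Sigma)$ and then invoking moment determinacy. A secondary technicality, already present for Theorem \ref{thm:}(ii), is the surjectivity of $f\mapsto P_f$ onto the mean-zero degree-$\le 2$ polynomials: it fails only when $p$ satisfies a nontrivial linear PDE $\sum_i a_i\partial_ip+\sum_{ij}b_{ij}\partial_i\partial_jp=0$ (which never happens for $p=N(0,\Sigma)$), and such densities must be excluded by a direct check that they violate (\ref{eq:-35}).
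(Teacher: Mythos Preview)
Your plan for part (i) is essentially the paper's approach: Taylor expand, linearize using $\bar\Sigma^{-1/2}-I\approx\tfrac12(I-\bar\Sigma)$, reduce to $\mathrm{Var}[Y]$ for the projected variable, then simplify using Gaussian integration by parts and the fourth-moment identity. The paper packages this as Lemma~\ref{lem:-1} (valid for general $X$) followed by the Gaussian specialization via (\ref{eq:-51})--(\ref{eq:-52}).

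For part (ii) there is a genuine gap, which you yourself flag. Your kernel-perturbation argument correctly extracts from $Q(f)\le0$ the conditions $\mathbb{E}[Xh(X)]=0$ and $\mathbb{E}[(XX^{\mathrm T}-\Sigma)h(X)]=0$ for all $h$ with $P_h=0$, but (a) the resulting relations are \emph{first}-order in $p$, not second-order --- the defining functionals of the kernel are $\int h\,\partial_ip$ and $\int h\,[\partial_i(x_jp)+\partial_j(x_ip)]$, so the span contains no $\nabla^2p$ and your ``$\mathcal D(\nabla^2p)$'' term never arises; and (b) even so, the first-order system $x_ip=\sum_k a_{ik}\partial_kp+\sum_{k,l}b_{ikl}[2\delta_{kl}p+x_k\partial_lp+x_l\partial_kp]$ does not by itself force $p$ Gaussian (already for $n=1$ it allows $p'/p$ to be a non-linear rational function of $x$), and you have no mechanism to pin down the free constants. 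Your proposed remedies (moment bootstrapping, Stein kernels) are speculative.

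The paper's proof of (ii) avoids this via two concrete ideas you are missing. First, in the variance expansion of Lemma~\ref{lem:-1}, the three ``$\mathrm{Var}[P_f]$''-type terms are shown to be jointly nonnegative \emph{for every} $X$ with the prescribed first two moments, by completing the square together with the Cauchy--Schwarz bound $\mathbb{E}(X^3)^2\le\mathbb{E}(X^2)\,\mathbb{E}[(X^2-1)^2]=\mathbb{E}(X^4)-1$ (equations (\ref{eq:-55})--(\ref{eq:-56})). Hence $Q(f)\le0$ forces the single residual inequality
\[
\mathbb{E}[f'(X)X]\,\mathbb{E}[f(X)(1-X^2)]-2\,\mathbb{E}[f'(X)]\,\mathbb{E}[f(X)X]\le 0
\]
for all test $f$. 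Second, this is converted to an ODE by localizing with bump functions at \emph{three} points $x_1,x_2,u$ and choosing the weight at the fixed auxiliary point $u$ to annihilate the $(1-x^2)p(x)$ factor; what remains is a bilinear form in the two free weights to which Lemma~\ref{lem:-2} applies, yielding $p'(x)/p(x)$ as a rational function whose integrability and moment constraints force it to be linear. The Cauchy--Schwarz step is the crux: it discards the positive part of $Q$ and isolates a sign-definite bilinear functional, structure that your abstract kernel argument does not see.
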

We should point out that the last term $\mathrm{tr}\big[(\Sigma\,\mathbb{E}[\partial^{2}f(X)])^{2}\big]$
in (\ref{eq:-50}) is non-negative due to the identity $\mathrm{tr}\big[(\Sigma\,\mathbb{E}[\partial^{2}f(X)])^{2}\big]=\mathrm{tr}\big[(\Sigma^{1/2}\mathbb{E}[\partial^{2}f(X)]\Sigma^{1/2})^{2}\big]$.
\begin{rem}
\label{rem:-2}Let t $F(x)=\mathbb{E}[f(x+X)]$, $x\in\mathbb{R}^{n}$.
Then (\ref{eq:-50}) implies that the second order moment matching
strictly reduces variance when $\partial F(0)\not=0$ or $\partial^{2}F(0)\not=0$.
\end{rem}
\begin{prop}
\label{prop:-2}Suppose that $X$ is a normal distribution with zero
mean and non-singular $\mathrm{Var}(X)=\Sigma$. Let $f$ be a Lebesgue
measurable function such that $\mathbb{E}[(1+|X|)^{8}f(X)^{2}]<\infty$.
Then 
\begin{equation}
\begin{aligned}\mathrm{Var}\Big[\frac{1}{N}\sum_{k=1}^{N}f(\tilde{X}^{(2)}(k))\Big] & =\frac{A_{N}}{N}\Big(\mathrm{Var}[f(X)]-\mathbb{E}[Xf(X)]^{\mathrm{T}}\Sigma^{-1}\mathbb{E}[Xf(X)]\\
 & \quad-\frac{1}{2}\mathrm{tr}\big[\mathbb{E}\big((\Sigma^{-1}XX^{\mathrm{T}}-I)f(X)\big)^{2}\big]\Big),
\end{aligned}
\label{eq:-65}
\end{equation}
with $A_{N}=1+\mathrm{O}(N^{-1/2})$. 
\end{prop}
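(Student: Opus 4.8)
The plan is to reduce $\mathrm{Var}[\tilde I_N^{(2)}(f)]$ to a two-point functional of a uniformly random point configuration and then run the same duality/density argument as in the proof of Proposition \ref{prop:-1}. For Proposition \ref{prop:-1} the first order moment matched samples are themselves jointly Gaussian in the Gaussian case, which makes the relevant two-point computation transparent; the new difficulty here is that the second order moment matched samples are \emph{not} jointly Gaussian, so the first task is to identify their joint law. Normalise $\mu=0$ and write $X(k)=\Sigma^{1/2}Z(k)$ with $Z(k)$ i.i.d.\ standard normal. Since for Gaussian $X$ the sample mean $\bar X$ is independent of the centred samples $Y(k):=X(k)-\bar X$, and $\tilde X^{(2)}(k)=\Sigma^{1/2}\bar\Sigma^{-1/2}Y(k)$ with $\bar\Sigma=\frac1N\sum_kY(k)Y(k)^{\mathrm T}$ a function of $\{Y(k)\}$ alone, the estimator $\tilde I_N^{(2)}$ depends only on $\{Y(k)\}$. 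Introduce the whitened samples $U(k):=\bar\Sigma^{-1/2}Y(k)$, so that $\sum_kU(k)=0$, $\frac1N\sum_kU(k)U(k)^{\mathrm T}=I$ and $\tilde X^{(2)}(k)=\Sigma^{1/2}U(k)$. The key observation is that $(U(1),\dots,U(N))$ is uniformly (i.e.\ invariantly) distributed on the doubly constrained Stiefel-type manifold $\mathcal V_N=\{(u_1,\dots,u_N)\in(\mathbb R^n)^N:\sum_ku_k=0,\ \sum_ku_ku_k^{\mathrm T}=NI\}$, \emph{independently of $\Sigma$}. This follows by conditioning on the empirical covariance $\hat S$ of the $Z(k)-\bar Z$: given $\hat S=s$, the $n\times N$ matrix $[\,Z(1)-\bar Z\mid\cdots\mid Z(N)-\bar Z\,]$ equals $(Ns)^{1/2}\Theta$ for a $\Theta$ uniform on the mean-zero Stiefel manifold $\{\Theta\Theta^{\mathrm T}=I,\ \Theta\mathbf{1}=0\}$, hence $[\,U(1)\mid\cdots\mid U(N)\,]=N^{1/2}A\Theta$ with $A=(\Sigma^{1/2}s\Sigma^{1/2})^{-1/2}\Sigma^{1/2}s^{1/2}\in O(n)$, and $A\Theta\stackrel d=\Theta$ by left $O(n)$-invariance, so both $s$ and $\Sigma$ drop out. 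Consequently, writing $g:=f\circ\Sigma^{1/2}$ (so $g(W)\stackrel d=f(X)$ for $W$ standard normal) and using exchangeability,
\[
N\,\mathrm{Var}\big[\tilde I_N^{(2)}(f)\big]=\mathrm{Var}[g(U(1))]+(N-1)\,\mathrm{Cov}[g(U(1)),g(U(2))].
\]

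The technical heart is then to expand the one- and two-point marginal densities $q_N,q_N^{(2)}$ of the uniform law on $\mathcal V_N$; these are explicit matrix-variate Beta / Stiefel-column densities. Letting $\varphi$ denote the standard Gaussian density on $\mathbb R^n$, I would establish $q_N(u)=\varphi(u)\big(1+N^{-1}\alpha(u)+r_N(u)\big)$ and $q_N^{(2)}(u_1,u_2)=\varphi(u_1)\varphi(u_2)\big(1+N^{-1}\beta(u_1,u_2)+r_N^{(2)}(u_1,u_2)\big)$, where $\alpha,\beta$ are explicit polynomials of degree $\le4$ in each argument and the remainders admit uniform (in $N\ge N_0$) polynomially weighted bounds $|r_N(u)|\le CN^{-3/2}(1+|u|)^6$ and $|r_N^{(2)}(u_1,u_2)|\le CN^{-3/2}(1+|u_1|)^6(1+|u_2|)^6$ on the (moving) supports, together with the crude dominations $q_N\le C(1+|u|)^4\varphi$ and $q_N^{(2)}\le C(1+|u_1|)^4(1+|u_2|)^4\,\varphi(u_1)\varphi(u_2)$. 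The degree $\le2$ part of $\beta$ — the only part that survives in the limit — is pinned down by the moment identities $\mathbb E[U(1)U(2)^{\mathrm T}]=-\tfrac1{N-1}I$ and $\mathbb E[(U(1)U(1)^{\mathrm T}-I)\otimes(U(2)U(2)^{\mathrm T}-I)]=-\tfrac1{N-1}\,\mathbb E[(WW^{\mathrm T}-I)\otimes(WW^{\mathrm T}-I)]+\mathrm O(N^{-2})$, which themselves follow from the constraints defining $\mathcal V_N$ plus exchangeability.

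Given these inputs, the conclusion is a soft argument, exactly as for Proposition \ref{prop:-1}. The dominations together with the hypothesis $\mathbb E[(1+|X|)^8f(X)^2]<\infty$ — note $\mathbb E[(1+|W|)^4|g(W)|]^2\le\mathbb E[(1+|W|)^8g(W)^2]<\infty$ handles the two-point term, which is where the exponent $8=4+4$ enters — give $N\,\mathrm{Var}[\tilde I_N^{(2)}(f)]\le C\,\mathbb E[(1+|X|)^8f(X)^2]$ uniformly in $N$. Substituting the expansions, $\mathrm{Var}[g(U(1))]\to\mathrm{Var}[g(W)]=\mathrm{Var}[f(X)]$, and, with the Wiener chaos decomposition $g=\sum_m\Pi_mg$ (Hermite projections w.r.t.\ standard normal), $(N-1)\,\mathrm{Cov}[g(U(1)),g(U(2))]\to\int\!\!\int g(u_1)g(u_2)\beta(u_1,u_2)\varphi(u_1)\varphi(u_2)\,du_1\,du_2=-\|\Pi_1g\|^2-\|\Pi_2g\|^2$, the higher chaos contributions being $\mathrm O(N^{-2})$ after multiplication by $N-1$; here $\|\Pi_1g\|^2=\mathbb E[Xf(X)]^{\mathrm T}\Sigma^{-1}\mathbb E[Xf(X)]$ and $\|\Pi_2g\|^2=\tfrac12\mathrm{tr}\big[\mathbb E((\Sigma^{-1}XX^{\mathrm T}-I)f(X))^2\big]$ by a short direct computation, and for smooth compactly supported $f$ these equal, via Gaussian integration by parts, the correction terms in (\ref{eq:-50}) of Theorem \ref{thm:-1}(i), so the two variance formulae agree on a dense class. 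The $\mathrm O(N^{-3/2})$ remainders, multiplied by $N-1$, yield $A_N=1+\mathrm O(N^{-1/2})$. Alternatively one may avoid the explicit limit and argue purely by duality: smooth compactly supported functions are dense in $L^2((1+|x|)^8p(x)\,dx)$, the identity (\ref{eq:-65}) holds there by Theorem \ref{thm:-1}(i) and integration by parts, and the uniform bound plus polarisation propagate it to all admissible $f$.

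I expect the main obstacle to be the density expansion described in the second paragraph: obtaining the few-point marginals of the doubly constrained manifold $\mathcal V_N$ with the first $N^{-1}$ correction made explicit and — crucially — with uniform polynomially weighted Gaussian control near the moving boundary of the support of $q_N^{(2)}$ (where the relevant Gram determinant degenerates), so that no mass escapes to infinity and the $\mathrm O(N^{-3/2})$ remainder survives integration against a merely $L^2$-weighted integrand. The structural reduction of the first paragraph (once the whitening/invariance trick is in place) and the functional-analytic wrap-up of the third are comparatively routine.
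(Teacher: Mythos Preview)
Your approach is correct in outline and takes a genuinely different route from the paper. The paper never identifies the invariant law on $\mathcal V_N$; instead it works directly in $\mathbb{R}^N$ with a \emph{partial} moment-matching map $T$ that transforms only the first two coordinates (equation~(\ref{eq:-17})), computes $\det[\partial T]$ explicitly, builds an approximate inverse $S$ satisfying $X=SY+\mathrm O(N^{-3/2})$, and Taylor-expands the Gaussian density ratio $p(SX)/p(X)$ using $\partial p=-xp$, $\partial^2 p=(xx^{\mathrm T}-I)p$. The resulting polynomial expectations are dispatched by Lemma~\ref{lem:-4}, and everything is carried out for $n=1$ first, the general case following from $Y=\Sigma^{-1/2}X$. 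Your Stiefel reduction is more structural: it explains \emph{why} $\Sigma$ drops out (left $O(n)$-invariance of the uniform law on $\mathcal V_N$) and why the limiting correction is precisely $-\|\Pi_1 g\|^2-\|\Pi_2 g\|^2$, and it handles general $n$ uniformly. What the paper's route buys is that it sidesteps your acknowledged main obstacle entirely --- no Stiefel marginals, no boundary behaviour of $q_N^{(2)}$ --- replacing the expansion of $q_N^{(2)}$ by a Taylor expansion of one explicit Gaussian ratio in $\mathbb R^N$, which is elementary once the approximate inverse $S$ is written down. One small caution on your wrap-up: the uniform bound $N\,\mathrm{Var}[\tilde I_N^{(2)}(f)]\le C\,\mathbb E[(1+|X|)^8 f(X)^2]$ does \emph{not} follow from the crude dominations $q_N^{(2)}\le C(1+|u_1|)^4(1+|u_2|)^4\varphi\otimes\varphi$ alone --- those give only an $\mathrm O(N)$ bound on $(N-1)\,\mathrm{Cov}[g(U(1)),g(U(2))]$; you need at least the $N^{-1}$ scale of $q_N^{(2)}-q_N\otimes q_N$, i.e.\ part of the expansion itself, before the uniform bound and hence the duality alternative become available.
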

Similar to (\ref{eq:-50}), The term $\mathrm{tr}\big[\mathbb{E}\big((\Sigma^{-1}XX^{\mathrm{T}}-I)f(X)\big)^{2}\big]$
in (\ref{eq:-65}) is non-negative. As a corollary of (\ref{eq:-18})
and (\ref{eq:-50}), it is seen that second order moment matching
provides more variance reduction than first order moment matching
does. Another consequence of Proposition \ref{prop:-1} and Proposition
\ref{prop:-2} is that they allow efficient computation of simulation
variance.
\begin{cor}
\label{cor:}Suppose that $X$ is a normal distribution with mean
$\mu=0$ and variance $\Sigma=I$. Then the variance of the first
order moment matching Monte Carlo estimator can be estimated by
\begin{equation}
\begin{aligned}\mathrm{Var}\Big[\frac{1}{N}\sum_{k=1}^{N}f(\tilde{X}^{(1)}(k))\Big] & \approx\frac{1}{N}\Big[\frac{1}{N}\sum_{k=1}^{N}f(\tilde{X}^{(1)}(k))^{2}-\Big(\frac{1}{N}\sum_{k=1}^{N}f(\tilde{X}^{(1)}(k))\Big)^{2}\\
 & \quad-\Big(\frac{1}{N}\sum_{k=1}^{N}\tilde{X}^{(1)}(k)f(\tilde{X}^{(1)}(k))\Big)^{\text{T}}\Sigma^{-1}\Big(\frac{1}{N}\sum_{k=1}^{N}\tilde{X}^{(1)}(k)f(\tilde{X}^{(1)}(k))\Big)\Big].
\end{aligned}
\label{eq:-29}
\end{equation}
Moreover, the variance of the second order moment matching Monte Carlo
estimator can be estimated by
\begin{equation}
\begin{aligned}\mathrm{Var}\Big[\frac{1}{N}\sum_{k=1}^{N}f(\tilde{X}^{(2)}(k))\Big] & \approx\frac{1}{N}\Big\{\frac{1}{N}\sum_{k=1}^{N}f(\tilde{X}^{(2)}(k))^{2}-\Big(\frac{1}{N}\sum_{k=1}^{N}f(\tilde{X}^{(2)}(k))\Big)^{2}\\
 & \quad-\Big(\frac{1}{N}\sum_{k=1}^{N}\tilde{X}^{(2)}(k)f(\tilde{X}^{(2)}(k))\Big)^{\text{T}}\Sigma^{-1}\Big(\frac{1}{N}\sum_{k=1}^{N}\tilde{X}^{(2)}(k)f(\tilde{X}^{(2)}(k))\Big)\\
 & \quad-\frac{1}{2}\mathrm{tr}\Big[\Big(\frac{1}{N}\sum_{k=1}^{N}[\Sigma^{-1}\tilde{X}^{(2)}(k)\tilde{X}^{(2)}(k)^{\mathrm{T}}-I]f(\tilde{X}^{(2)}(k))\Big)^{2}\Big]\Big\}.
\end{aligned}
\label{eq:-92}
\end{equation}
\end{cor}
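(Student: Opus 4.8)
The plan is to read off the right-hand sides of (\ref{eq:-29}) and (\ref{eq:-92}) as the empirical, plug-in versions of the exact variance formulae (\ref{eq:-27}) and (\ref{eq:-65}) of Proposition~\ref{prop:-1} and Proposition~\ref{prop:-2}, and to justify the substitution by a law of large numbers for the moment-matched samples. Fix $\mu=0$ and $\Sigma=I$ as in the statement, and abbreviate by $V^{(1)}=\mathrm{Var}[f(X)]-\mathbb{E}[Xf(X)]^{\mathrm T}\Sigma^{-1}\mathbb{E}[Xf(X)]$ and $V^{(2)}=V^{(1)}-\tfrac12\mathrm{tr}\big[\mathbb{E}\big((\Sigma^{-1}XX^{\mathrm T}-I)f(X)\big)^{2}\big]$ the bracketed quantities in (\ref{eq:-27}) and (\ref{eq:-65}); by those propositions $\mathrm{Var}[\tilde I_{N}^{(j)}]=(A_{N}/N)V^{(j)}$ with $A_{N}\to1$. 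Writing $\hat V_{N}^{(1)}$, $\hat V_{N}^{(2)}$ for the quantities inside the outer brackets on the right of (\ref{eq:-29}), (\ref{eq:-92}), it then suffices to prove that $\hat V_{N}^{(j)}\to V^{(j)}$ (almost surely, say); the asserted approximations follow at once.

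Each of $\hat V_{N}^{(1)}$, $\hat V_{N}^{(2)}$ is a fixed polynomial in finitely many empirical averages of the form $S_{N}(\phi):=N^{-1}\sum_{k=1}^{N}\phi(\tilde X^{(j)}(k))$, with $\phi$ running over $f$, $f^{2}$, the components of $xf(x)$ and the entries of $xx^{\mathrm T}f(x)$. By the continuous mapping theorem it is enough to show $S_{N}(\phi)\to\mathbb{E}[\phi(X)]$ a.s.\ for each such $\phi$. I would argue this through the empirical measures $\hat\mu_{N}=N^{-1}\sum_{k}\delta_{X(k)}$. By the strong law of large numbers $\hat\mu_{N}$ converges weakly to the law of $X$ a.s., and, using $\mathbb{E}|X|^{2}<\infty$ together with the stronger moment hypotheses of Propositions~\ref{prop:-1} and \ref{prop:-2}, the relevant integrals $\int|x|^{m}|f(x)|^{\ell}\,d\hat\mu_{N}$ also converge a.s.\ to their expectations. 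In particular $\bar X\to0$ and $\bar\Sigma\to\Sigma$ a.s., so by continuity of the matrix square root and inverse at the positive-definite $\Sigma$ one has $\Sigma^{1/2}\bar\Sigma^{-1/2}\to I$ a.s.; hence $\tilde X^{(j)}(k)=M_{N}X(k)+v_{N}$ for a random matrix $M_{N}\to I$ and vector $v_{N}\to0$ (a.s., uniformly in $k$), and the moment-matched empirical measure $\tilde\mu_{N}^{(j)}$ is the push-forward of $\hat\mu_{N}$ under the affine map $x\mapsto M_{N}x+v_{N}$.

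Since $\hat\mu_{N}$ converges weakly to $\mathrm{Law}(X)$ with converging moments and the affine maps $x\mapsto M_{N}x+v_{N}$ converge to the identity, $\tilde\mu_{N}^{(j)}$ likewise converges weakly to $\mathrm{Law}(X)$ with converging moments; combined with uniform integrability this gives $S_{N}(\phi)=\int\phi\,d\tilde\mu_{N}^{(j)}\to\mathbb{E}[\phi(X)]$ for every $\phi$ that is $\mathrm{Law}(X)$-a.e.\ continuous and whose growth is dominated by the available moments. The hypotheses $\mathbb{E}[(1+|X|)^{4}f(X)^{2}]<\infty$ and $\mathbb{E}[(1+|X|)^{8}f(X)^{2}]<\infty$ are exactly what makes this work with a margin: by Cauchy--Schwarz they bound $\mathbb{E}[(1+|X|)^{2}|f(X)|]$ and $\mathbb{E}[(1+|X|)^{4}|f(X)|]$, and the extra powers furnish the uniform integrability needed to pass to the limit under the vanishing affine perturbation. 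Feeding $S_{N}(\phi)\to\mathbb{E}[\phi(X)]$ back into the polynomial expressions defining $\hat V_{N}^{(j)}$, using $\mu=0$ and $\Sigma=I$ and matching terms (in particular $S_{N}(xx^{\mathrm T}f(x))\to\mathbb{E}[XX^{\mathrm T}f(X)]$ together with $S_{N}(f)\to\mathbb{E}[f(X)]$ for the trace term in (\ref{eq:-92})), yields $\hat V_{N}^{(1)}\to V^{(1)}$ and $\hat V_{N}^{(2)}\to V^{(2)}$.

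I expect the only real obstacle to be this last passage to the limit: because each summand $\phi(\tilde X^{(j)}(k))$ depends on the whole sample through $\bar X$ (and $\bar\Sigma$), the plain strong law does not apply, and because $\phi$ grows polynomially a naive continuity argument is not available either; what is required is a uniform (over a small neighbourhood of $(I,0)$) integrability estimate for the family of functions $x\mapsto\phi(Mx+v)$ against the Gaussian density, which holds because all these densities admit a common integrable envelope. A minor point, dealt with by the Portmanteau theorem, is that discontinuities of $f$ on a Lebesgue-null set are harmless since the limit law is absolutely continuous. With these two points settled, the remainder is routine bookkeeping.
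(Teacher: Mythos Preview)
Your approach is correct and matches the paper's intent: the paper gives no separate proof of Corollary~\ref{cor:} at all, treating (\ref{eq:-29}) and (\ref{eq:-92}) as the obvious plug-in estimators obtained by replacing the expectations in (\ref{eq:-27}) and (\ref{eq:-65}) with sample averages over the moment-matched samples. Your sketch supplies exactly the justification the paper omits---consistency of those sample averages via the affine-perturbation and uniform-integrability argument---so there is nothing to add beyond noting that the paper is content with the informal ``$\approx$'' and leaves the convergence unargued.
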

Note that in the process of the first order (or second order) moment
matching Monte Carlo, values of $\tilde{X}^{(1)}(k)$ and $f(\tilde{X}^{(1)}(k))$
(respectively $\tilde{X}^{(2)}(k)$ and $f(\tilde{X}^{(2)}(k))$)
are readily available. Therefore, Corollary \ref{cor:} yields an
estimation of the simulation error as a by-product of estimation of
$\mathbb{E}[f(X)]$ using moment matching. In other words, (\ref{eq:-29})
and (\ref{eq:-92}) requires only $\text{O}(1)$ Monte Carlo cycles
to compute. Here a Monte Carlo cycle means, for a fixed number of
samples $N$, the procedure of simulation of $N$ samples, followed
by $N$ evaluations of the integrand function. It is worth mentioning
that the matrix operations in (\ref{eq:-29}) and (\ref{eq:-92})
have only marginal computation cost: they are of roughly the same
cost as applying the correlation matrix to get correlated samples.
In contrast, formula (\ref{eq:-18}) in Theorem \ref{thm:} requires
$\text{O}(n)$ Monte Carlo cycles, and formula (\ref{eq:-50}) in
Theorem \ref{thm:-1} requires $\text{O}(N^{-2})$ cycles. This is
because the estimation of each expectation $\mathbb{E}[\partial_{i}f(X)]$
and $\mathbb{E}[\partial_{ij}^{2}f(X)]$, $1\le i,j\le n$ requires
$\text{O}(1)$ Monte Carlo cycles. Besides higher computation costs,
the effectiveness of formulae (\ref{eq:-18}) and (\ref{eq:-50})
is sensitive to the smoothness of the integrand. More specifically,
consider the example of $f$ being the Heaviside function $\chi_{[0,\infty)}$.
The expectation $\mathbb{E}[\partial f(X)]$ is an integral of the
Dirac delta function, for which an integrand smoothing technique is
usually required before applying Monte Carlo methods, at the cost
of introducing bias.

Another implication of Theorem \ref{thm:} is that for any continuous
random variable $Y$, there is a non-linear moment matching scheme
which guarantees asymptotic variance reduction. 
\begin{cor}
\label{cor:-1}Let $Y$ be a continuous one-dimensional random variable
with cumulative probability function $F_{Y}(y)=\mathbb{P}(Y\le y)$.
Let 
\[
\mathcal{N}(x)=\frac{1}{\sqrt{2\pi}}\int_{-\infty}^{x}e^{-z^{2}/2}dz,
\]
 be the cumulative probability function of the standard normal distribution,
and $X=\mathcal{N}^{-1}(F_{Y}(Y))$. Then $X$ is a standard normal
distribution. Moreover, for any bounded smooth function $f$, let
\begin{align}
\tilde{Y}^{(1)}(k) & =(F_{Y}^{-1}\mathcal{N})(X(k)-\bar{X}),\label{eq:-30}\\
\tilde{Y}^{(2)}(k) & =(F_{Y}^{-1}\mathcal{N})[\bar{\sigma}^{-1}(X(k)-\bar{X})]\label{eq:-93}
\end{align}
where $X(k)=\mathcal{N}^{-1}(F_{Y}(Y(k)))$ and 
\[
\bar{X}=\frac{1}{N}\sum_{k=1}^{N}X(k),\quad\bar{\sigma}^{2}=\frac{1}{N}\sum_{k=1}^{N}X(k)^{2}-\bar{X}^{2}.
\]
Then
\begin{align}
\tilde{I}_{N}^{(1)} & =\frac{1}{N}\sum_{k=1}^{N}f(\tilde{Y}^{(1)}(k)),\label{eq:-94}\\
\tilde{I}_{N}^{(2)} & =\frac{1}{N}\sum_{k=1}^{N}f(\tilde{Y}^{(2)}(k)),\label{eq:-95}
\end{align}
are estimators of $\mathbb{E}[f(Y)]$ such that $\lim_{N\to\infty}\tilde{I}_{N}^{(1)}=\lim_{N\to\infty}\mathbb{E}(\tilde{I}_{N}^{(1)})=\mathbb{E}[f(Y)]$
a.s., and
\[
\lim_{N\to\infty}N\mathrm{Var}(\tilde{I}_{N}^{(2)})\le\lim_{N\to\infty}N\mathrm{Var}(\tilde{I}_{N}^{(1)})\le\mathrm{Var}[f(Y)].
\]
\end{cor}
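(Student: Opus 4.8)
The plan is to reduce everything to the one-dimensional, $\mu=0$, $\Sigma=1$ instances of the results already proved for normal distributions. Since $Y$ is continuous, $F_{Y}(Y)$ is uniform on $(0,1)$, so $X=\mathcal{N}^{-1}(F_{Y}(Y))$ is standard normal and $\mathcal{N}(X)=F_{Y}(Y)$ a.s.; moreover $F_{Y}^{-1}(F_{Y}(Y))=Y$ a.s., the exceptional event being contained in the countable union of the half-open intervals on which $F_{Y}$ is constant, each of which carries no $Y$-mass. Setting $g=f\circ F_{Y}^{-1}\circ\mathcal{N}$ we get $g(X)=f(Y)$ a.s., hence $\mathbb{E}[g(X)]=\mathbb{E}[f(Y)]$ and $\mathrm{Var}[g(X)]=\mathrm{Var}[f(Y)]$. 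The crucial algebraic observation is that, since $\mu=0$, $\Sigma=1$ and $\bar{\Sigma}=\bar{\sigma}^{2}$ for the variable $X$, the first and second order moment matched samples of $X$ are $\tilde{X}^{(1)}(k)=X(k)-\bar{X}$ and $\tilde{X}^{(2)}(k)=\bar{\sigma}^{-1}(X(k)-\bar{X})$, so that $\tilde{Y}^{(i)}(k)=(F_{Y}^{-1}\mathcal{N})(\tilde{X}^{(i)}(k))$ and $f(\tilde{Y}^{(i)}(k))=g(\tilde{X}^{(i)}(k))$ for $i=1,2$. In other words, $\tilde{I}_{N}^{(1)}$ and $\tilde{I}_{N}^{(2)}$ are exactly the first and second order moment matching Monte Carlo estimators of $\mathbb{E}[g(X)]$ for the standard normal $X$.

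With this identification, the variance chain follows directly from Proposition~\ref{prop:-1} and Proposition~\ref{prop:-2} applied to the integrand $g$. Because $f$ is bounded, $g$ is bounded, so $\mathbb{E}[(1+|X|)^{8}g(X)^{2}]<\infty$ and both propositions apply with $n=1$, $\Sigma=1$. They give $\lim_{N\to\infty}N\,\mathrm{Var}(\tilde{I}_{N}^{(1)})=\mathrm{Var}[g(X)]-\mathbb{E}[Xg(X)]^{2}$ and $\lim_{N\to\infty}N\,\mathrm{Var}(\tilde{I}_{N}^{(2)})=\mathrm{Var}[g(X)]-\mathbb{E}[Xg(X)]^{2}-\tfrac{1}{2}\mathbb{E}[(X^{2}-1)g(X)]^{2}$. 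Since $\mathbb{E}[Xg(X)]^{2}\ge0$, $\mathbb{E}[(X^{2}-1)g(X)]^{2}\ge0$ and $\mathrm{Var}[g(X)]=\mathrm{Var}[f(Y)]$, the inequalities $\lim_{N\to\infty}N\,\mathrm{Var}(\tilde{I}_{N}^{(2)})\le\lim_{N\to\infty}N\,\mathrm{Var}(\tilde{I}_{N}^{(1)})\le\mathrm{Var}[f(Y)]$ follow.

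For consistency and asymptotic unbiasedness I would argue directly, rather than invoke the ``technical assumptions'' mentioned after Definition~\ref{def:-1}, since $g$ need not be smooth. The function $g$ is bounded and, being the composition of the smooth maps $f$ and $\mathcal{N}$ with the nondecreasing $F_{Y}^{-1}$, it is continuous off a countable (hence $\mathcal{N}(0,1)$-null) set. By the strong law of large numbers $\bar{X}\to0$ and $\bar{\sigma}^{2}\to1$ a.s. (so $\bar{\sigma}>0$ for all large $N$ a.s.), and the empirical measures $\tfrac{1}{N}\sum_{k}\delta_{X(k)}$ converge weakly to $\mathcal{N}(0,1)$ a.s.; shifting by $-\bar{X}$ and rescaling by $\bar{\sigma}^{-1}$ preserves this weak limit, so $\tfrac{1}{N}\sum_{k}\delta_{\tilde{X}^{(i)}(k)}\Rightarrow\mathcal{N}(0,1)$ a.s. for $i=1,2$. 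Since $g$ is bounded and its discontinuity set is null for the limit law, the (extended) portmanteau theorem yields $\tilde{I}_{N}^{(i)}=\int g\,d\big(\tfrac{1}{N}\sum_{k}\delta_{\tilde{X}^{(i)}(k)}\big)\to\int g\,d\mathcal{N}(0,1)=\mathbb{E}[f(Y)]$ a.s. For the means, exchangeability gives $\mathbb{E}(\tilde{I}_{N}^{(1)})=\mathbb{E}[g(X(1)-\bar{X})]$; as $\bar{X}\to0$ a.s. and $g$ is a.s. continuous at $X(1)$ (which has a density), bounded convergence gives $\mathbb{E}(\tilde{I}_{N}^{(1)})\to\mathbb{E}[g(X(1))]=\mathbb{E}[f(Y)]$.

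The only real obstacle is the possible irregularity of $g=f\circ F_{Y}^{-1}\circ\mathcal{N}$: one cannot apply Theorems~\ref{thm:} or~\ref{thm:-1} to it, which is why the variance statements must be routed through Propositions~\ref{prop:-1} and~\ref{prop:-2} (and why boundedness of $f$ is assumed, so that the integrability hypotheses hold), and why consistency and asymptotic unbiasedness need the weak-convergence argument above rather than a direct law-of-large-numbers citation. Checking measurability of $g$ and the consistent use of the generalized inverse $F_{Y}^{-1}$ is routine.
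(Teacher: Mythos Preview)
Your argument is correct and is exactly the route the paper intends: the corollary is stated without proof in Section~\ref{sec:-6}, and your reduction $g=f\circ F_{Y}^{-1}\circ\mathcal{N}$, together with the identification $f(\tilde{Y}^{(i)}(k))=g(\tilde{X}^{(i)}(k))$, is the natural way to read ``implication of Theorem~\ref{thm:}'' there. Your choice to invoke Propositions~\ref{prop:-1} and~\ref{prop:-2} rather than Theorems~\ref{thm:} and~\ref{thm:-1} is the right call, since $g$ inherits boundedness but not compact support or smoothness from $f$; this is a point the paper glosses over, and your weak-convergence argument for consistency is a clean way to handle the possible discontinuities of $F_{Y}^{-1}$.
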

From the implementation perspective, simulation of random samples
of $Y$ proceeds by sampling random uniform distributions and then
converting these samples into samples of $Y$. Corollary \ref{cor:-1}
suggests an intermediate step involving a transformation to normal
distributions, which is a common step in stochastic processes modeling.
The moment matching technique is then applied to samples from this
intermediate normal distribution, rather than directly to samples
of $Y$. The non-linear moment matching schemes in Corollary \ref{cor:-1}
can be applied to continuous multi-dimensional distributions by applying
the one-dimensional scheme to the conditional distributions one by
one.

\section{\label{sec:-1}Proof of main results: first order moment matching}

For any sequence $\{\xi_{N}\}_{N}$ of random variables, we shall
adopt the notation $\xi_{N}=\text{O}_{r}(N^{-m})$ for $m>0$, if
\begin{equation}
\mathbb{E}(|\xi_{N}|^{p})^{1/p}\le c_{p}N^{-m},\label{eq:-5}
\end{equation}
for any $1\le p\le r$, where $c_{p}>0$ is a constant depending on
$p$ and the sequence $\{\xi_{N}\}_{N}$. Similarly, we denote $\xi_{N}=\text{o}_{r}(N^{-m})$
if
\begin{equation}
\lim_{N\to\infty}N^{m}\mathbb{E}(|\xi_{N}|^{p})^{1/p}=0,\label{eq:}
\end{equation}
for any $1\le p\le r$. 
\begin{lem}
\label{lem:}Suppose that $X$ is a continuous random vector with
$\mathbb{E}(|X|^{2})<\infty$. Let $f$ be a smooth function with
compact support. Then 
\begin{equation}
\begin{aligned}\mathrm{Var}\Big[\frac{1}{N}\sum_{k=1}^{N}f(\tilde{X}^{(1)}(k))\Big] & =\frac{1}{N}\mathrm{Var}[f(X)]-\frac{2}{N}\mathbb{E}[\partial f(X)]\mathbb{E}[(X-\mu)f(X)]\\
 & \quad+\frac{1}{N}\mathbb{E}[\partial f(X)]\Sigma\,\mathbb{E}[\partial f(X)]^{\text{T}}+\mathrm{o}(N^{-1}).
\end{aligned}
\label{eq:-20}
\end{equation}
If, in addition $\mathbb{E}(|X|^{4})<\infty$, then the remainder
in (\ref{eq:-20}) is of order $\text{O}(N^{-3/2})$.
\end{lem}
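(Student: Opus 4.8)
The plan is to \emph{linearize} the moment-matched estimator around the plain one and then read the variance off a single i.i.d.\ sum. Write $\tilde X^{(1)}(k)=X(k)-D$ with $D:=\bar X-\mu=\frac1N\sum_{k}(X(k)-\mu)$; one always has $\mathbb E|D|^{2}=N^{-1}\mathrm{tr}\,\Sigma$, and $D=\mathrm O_{r}(N^{-1/2})$ whenever $\mathbb E(|X|^{r})<\infty$ by the Marcinkiewicz--Zygmund (or Rosenthal) inequality. Since $f$ has compact support, $f,\partial f,\partial^{2}f$ are bounded; a second-order Taylor expansion gives $f(X(k)-D)=f(X(k))-\partial f(X(k))D+R_{k}$, where $R_k$ satisfies \emph{both} $|R_{k}|\le\tfrac12\|\partial^{2}f\|_{\infty}|D|^{2}$ and $|R_{k}|\le 2\|f\|_{\infty}+\|\partial f\|_{\infty}|D|$ (keeping both bounds is important, since without fourth moments $\mathbb E|D|^4$ may be infinite). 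Averaging, $\tilde I_{N}^{(1)}=I_{N}-G_{N}D+R_{N}$, with $I_{N}=\frac1N\sum_k f(X(k))$, $G_{N}=\frac1N\sum_k\partial f(X(k))$, and $R_{N}=\frac1N\sum_k R_k$ inheriting both bounds. Splitting $G_{N}D=gD+(G_{N}-g)D$ with $g:=\mathbb E[\partial f(X)]$ yields the key decomposition
\[
\tilde I_{N}^{(1)}=(I_{N}-gD)+E_{N},\qquad E_{N}:=R_{N}-(G_{N}-g)D .
\]

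The main term is an exact i.i.d.\ sum: $I_{N}-gD=\frac1N\sum_{k}h(X(k))$ with $h(x):=f(x)-g(x-\mu)$, and since $\mathbb E[g(X-\mu)]=0$ one gets $\mathrm{Var}(I_{N}-gD)=\frac1N\mathrm{Var}[h(X)]$ with
$\mathrm{Var}[h(X)]=\mathrm{Var}[f(X)]-2\,\mathbb E[\partial f(X)]\,\mathbb E[(X-\mu)f(X)]+\mathbb E[\partial f(X)]\,\Sigma\,\mathbb E[\partial f(X)]^{\mathrm T}$, i.e.\ exactly the three displayed terms of \eqref{eq:-20}. It then remains to show that $\mathrm{Var}(\tilde I_{N}^{(1)})-\mathrm{Var}(I_{N}-gD)=2\,\mathrm{Cov}(I_{N}-gD,E_{N})+\mathrm{Var}(E_{N})$ is $\mathrm o(N^{-1})$ in general and $\mathrm O(N^{-3/2})$ under the fourth-moment hypothesis. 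As $\mathrm{Var}(I_{N}-gD)=\mathrm O(N^{-1})$, Cauchy--Schwarz gives $|\mathrm{Cov}(I_{N}-gD,E_{N})|\le\mathrm O(N^{-1/2})\,\mathbb E[E_{N}^{2}]^{1/2}$ and $\mathrm{Var}(E_{N})\le\mathbb E[E_{N}^{2}]$, so everything reduces to estimating $\mathbb E[E_{N}^{2}]\le 2\,\mathbb E[R_{N}^{2}]+2\,\mathbb E[|(G_{N}-g)D|^{2}]$.

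For the cross term, expand $\mathbb E[|(G_{N}-g)D|^{2}]=N^{-4}\sum_{j,j',k,k'}\mathbb E[(\partial f(X(j))-g)\!\cdot\!(\partial f(X(j'))-g)\,(X(k)-\mu)\!\cdot\!(X(k')-\mu)]$; independence and the zero means annihilate every summand in which some index occurs exactly once, leaving $\mathrm O(N^{2})$ surviving tuples, each of them $\mathrm O(1)$ because $\partial f$ is bounded and $\mathbb E|X|^{2}<\infty$. Hence $\mathbb E[|(G_{N}-g)D|^{2}]=\mathrm O(N^{-2})$ using only a second moment. For $R_{N}$, if $\mathbb E(|X|^{4})<\infty$ then simply $\mathbb E[R_{N}^{2}]\le\tfrac14\|\partial^{2}f\|_{\infty}^{2}\,\mathbb E|D|^{4}=\mathrm O(N^{-2})$, which together with the above produces the $\mathrm O(N^{-3/2})$ remainder.

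The main obstacle is the general case, where only $\mathbb E|X|^{2}<\infty$ and a crude bound gives merely $\mathbb E[R_{N}^{2}]=\mathrm O(N^{-1})$; here I would use a uniform-integrability refinement. Set $Z_{N}:=\sqrt N\,D$, so $\mathbb E|Z_{N}|^{2}=\mathrm{tr}\,\Sigma$ for every $N$ while $Z_{N}\Rightarrow\mathcal N(0,\Sigma)$ by the central limit theorem; by the Vitali/Scheff\'e-type converse, convergence in law together with convergence of the $L^{1}$-norms of $|Z_{N}|^{2}$ to $\mathrm{tr}\,\Sigma$ forces $\{|Z_{N}|^{2}\}_{N}$ to be uniformly integrable, hence $\mathbb E[|Z_{N}|^{2}\mathbf 1_{\{|Z_{N}|>t_{N}\}}]\to0$ for any $t_{N}\to\infty$. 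Split $\mathbb E[R_N^2]$ according to $|D|\le N^{-1/4}$, $N^{-1/4}<|D|\le1$, and $|D|>1$: on the first event use $|R_{N}|\le\tfrac12\|\partial^{2}f\|_{\infty}|D|^{2}$ and $|D|^{4}\le N^{-1/2}|D|^{2}$ to get a contribution $\mathrm O(N^{-3/2})$; on the other two use the same quadratic bound on the event $\{N^{-1/4}<|D|\le 1\}$ and the linear bound $|R_N|\le 2\|f\|_\infty+\|\partial f\|_\infty|D|$ on $\{|D|>1\}$, and in both cases the uniform integrability of $|Z_{N}|^{2}$ (applied with $t_N=N^{1/4}$, resp.\ $t_N=\sqrt N$) shows the contribution is $\mathrm o(N^{-1})$. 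Thus $\mathbb E[R_{N}^{2}]=\mathrm o(N^{-1})$, so $\mathbb E[E_{N}^{2}]=\mathrm o(N^{-1})$, and the two Cauchy--Schwarz estimates above give the general remainder $\mathrm o(N^{-1})$, completing the proof.
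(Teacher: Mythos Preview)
Your proof is correct, and while it shares the core idea with the paper (Taylor-expand $f(X(k)-D)$ around $X(k)$ and control the remainder), it is organized quite differently. The paper writes $\frac1N\sum_k f(\tilde X^{(1)}(k))=\frac1N\sum_k f(X(k))-\frac1N\sum_k\partial f(X(k))\bar X+\cdots$, squares this sum, and computes the expectations of the resulting cross terms one by one, invoking the CLT to replace $\frac1N\sum_k\partial f(X(k))$ by $\mathbb E[\partial f(X)]$; it then checks separately that $\mathbb E(\tilde I_N^{(1)})=o(N^{-1/2})$ so that the variance is the second moment up to negligible error. Your key simplification is to recognize that the leading piece $I_N-gD$ with $g=\mathbb E[\partial f(X)]$ is an \emph{exact} i.i.d.\ average $\frac1N\sum_k h(X(k))$ for $h(x)=f(x)-g(x-\mu)$, so the three displayed terms of the lemma are simply $\frac1N\mathrm{Var}[h(X)]$; this spares you all the cross-expectation bookkeeping and the separate treatment of the mean. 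For the general second-moment case, the paper controls the Taylor remainder by a one-line appeal to dominated convergence; your uniform-integrability argument for $\{|\sqrt N\,D|^2\}$ (via Skorokhod/Scheff\'e) is precisely what makes that step rigorous, and your three-region splitting of $\mathbb E[R_N^2]$ is a clean way to execute it. In short, both arguments are Taylor-based, but your i.i.d.\ decomposition is a genuine streamlining, and your remainder analysis is more explicit.
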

\begin{proof}
By replacing $f$ with $f-c$, we may assume $\mathbb{E}[f(X)]=0$.
Without loss of generality, we assume $\mathbb{E}(X)=0$. Clearly
$\bar{X}=\text{O}_{2}(N^{-1/2})$. Therefore, by Taylor's formula,
\[
f(\tilde{X}^{(1)}(k))=f(X(k))-\sum_{i}\partial_{i}f(\xi(k))\bar{X}_{i},
\]
where $|\xi(k)-X(k)|\le|\bar{X}|$. Note that, by the dominated convergence
theorem, $[\partial_{i}f(\xi(k))-\partial_{i}(X(k))]\bar{X}_{i}=\text{o}_{2}(N^{-1/2})$.
Therefore,
\begin{equation}
f(\tilde{X}^{(1)}(k))=f(X(k))-\sum_{i}\partial_{i}f(X(k))\bar{X}_{i}+\text{o}_{2}(N^{-1/2}).\label{eq:-7}
\end{equation}
and
\begin{equation}
\frac{1}{N}\sum_{k=1}^{N}f(\tilde{X}^{(1)}(k))=\frac{1}{N}\sum_{k=1}^{N}f(X(k))-\frac{1}{N}\sum_{k=1}^{N}\sum_{i}\partial_{i}f(X(k))\bar{X}_{i}+\text{o}_{2}(N^{-1/2}).\label{eq:-8}
\end{equation}
Note that, by $\mathbb{E}[f(X)]=0$, we have $\frac{1}{N}\sum_{k=1}^{N}f(X(k))=\text{O}_{2}(N^{-1/2})$,
which implies
\begin{equation}
\begin{aligned}\Big(\frac{1}{N}\sum_{k=1}^{N}f(\tilde{X}^{(1)}(k))\Big)^{2} & =\Big(\frac{1}{N}\sum_{k=1}^{N}f(X(k))\Big)^{2}+\Big(\frac{1}{N}\sum_{k=1}^{N}\sum_{i}\partial_{i}f(X(k))\bar{X}_{i}\Big)^{2}\\
 & \quad-2\sum_{i}\Big(\frac{1}{N}\sum_{k=1}^{N}f(X(k))\Big)\Big(\frac{1}{N}\sum_{k=1}^{N}\partial_{i}f(X(k))\bar{X}_{i}\Big)+\text{o}_{1}(N^{-1}).
\end{aligned}
\label{eq:-9}
\end{equation}
We now compute the expectation of individual terms on the right side
of (\ref{eq:-9}). For the first term,
\begin{equation}
\mathbb{E}\Big[\Big(\frac{1}{N}\sum_{k=1}^{N}f(X(k))\Big)^{2}\Big]=\frac{1}{N}\text{Var}[f(X)].\label{eq:-10}
\end{equation}
For the second term, we have 
\begin{equation}
\frac{1}{N}\sum_{k=1}^{N}\partial_{i}f(X(k))=\mathbb{E}[\partial_{i}f(X)]+\text{O}_{2}(N^{-1/2}),\label{eq:-14}
\end{equation}
by the central limit theorem. This gives
\begin{equation}
\begin{aligned}\mathbb{E}\Big[\Big(\frac{1}{N}\sum_{k=1}^{N}\sum_{i}\partial_{i}f(X(k))\bar{X}_{i}\Big)^{2}\Big] & =\mathbb{E}\Big[\Big(\sum_{i}\mathbb{E}[\partial_{i}f(X)]\bar{X}_{i}\Big)^{2}\Big]+\text{O}(N^{-3/2})\\
 & =\frac{1}{N}\sum_{i,j}\Sigma_{ij}\mathbb{E}[\partial_{i}f(X)]\mathbb{E}[\partial_{j}f(X)]+\text{O}(N^{-3/2}).
\end{aligned}
\label{eq:-11}
\end{equation}
Similarly,
\begin{equation}
\begin{aligned}\mathbb{E}\Big[\Big(\frac{1}{N}\sum_{k=1}^{N}f(X(k))\Big)\Big(\frac{1}{N}\sum_{k=1}^{N}\partial_{i}f(X(k))\bar{X}_{i}\Big)\Big] & =\mathbb{E}\Big[\Big(\frac{1}{N}\sum_{k=1}^{N}f(X(k))\Big)\big(\mathbb{E}[\partial_{i}f(X)]\bar{X}_{i}\big)\Big]+\text{O}(N^{-3/2})\\
 & =\mathbb{E}[\partial_{i}f(X)]\mathbb{E}[f(X(1))\bar{X}_{i}]+\text{O}(N^{-3/2})\\
 & =\frac{1}{N}\mathbb{E}[\partial_{i}f(X)]\mathbb{E}[f(X(1))X_{i}(1)]+\text{O}(N^{-3/2})\\
 & =\frac{1}{N}\mathbb{E}[\partial_{i}f(X)]\mathbb{E}[X_{i}f(X)]+\text{O}(N^{-3/2}).
\end{aligned}
\label{eq:-12}
\end{equation}
Combining (\ref{eq:-9}), \pageref{eq:-10}, (\ref{eq:-11}), (\ref{eq:-12})
gives
\begin{equation}
\begin{aligned}\mathbb{E}\Big[\Big(\frac{1}{N}\sum_{k=1}^{N}f(\tilde{X}^{(1)}(k))\Big)^{2}\Big] & =\frac{1}{N}\mathrm{Var}[f(X)]-\frac{2}{N}\sum_{i}\mathbb{E}[\partial_{i}f(X)]\mathbb{E}[X_{i}f(X)]\\
 & \quad+\frac{1}{N}\sum_{i,j}\Sigma_{ij}\mathbb{E}[\partial_{i}f(X)]\mathbb{E}[\partial_{j}f(X)]+\text{o}(N^{-1}).
\end{aligned}
\label{eq:-15}
\end{equation}
By (\ref{eq:-7}) and (\ref{eq:-14}),
\[
\frac{1}{N}\sum_{k=1}^{N}f(\tilde{X}^{(1)}(k))=\frac{1}{N}\sum_{k=1}^{N}f(X(k))+\sum_{i}\mathbb{E}[\partial_{i}f(X)]\bar{X}_{i}+\text{o}_{2}(N^{-1/2}),
\]
which, together with $\mathbb{E}[f(X)]=0$ and $\mathbb{E}(\bar{X})=0$,
implies that
\begin{equation}
\mathbb{E}\Big(\frac{1}{N}\sum_{k=1}^{N}f(\tilde{X}^{(1)}(k))\Big)=\text{o}(N^{-1/2}).\label{eq:-16}
\end{equation}
Now (\ref{eq:-20}) follows readily from (\ref{eq:-15}) and (\ref{eq:-16}).

Suppose in addition that $\mathbb{E}(|X|^{4})<\infty$. Then $\bar{X}=\text{O}_{4}(N^{-1/2})$.
To see (\ref{eq:-5}) for $p>2$, by the Burkholder--David--Gundy
inequality and Minkowski's inequality, 
\[
\begin{aligned}N^{p}\mathbb{E}(|\bar{X}|^{p}) & =\mathbb{E}\Big(\Big|\sum_{k=1}^{N}X(k)\Big|^{p}\Big)\\
 & \le c_{p}\mathbb{E}\Big(\Big|\sum_{k=1}^{N}X(k)^{2}\Big|^{p/2}\Big)\le c_{p}\Big(\sum_{k=1}^{N}\mathbb{E}[|X(k)|^{p}]^{2/p}\Big)^{p/2}
\end{aligned}
\]
which implies (\ref{eq:-5}). Therefore, by Taylor's formula,
\[
\begin{aligned}f(\tilde{X}^{(1)}(k)) & =f(X(k))-\sum_{i}\partial_{i}f(X(k))\bar{X}_{i}+\sum_{i,j}\partial_{ij}^{2}f(\xi(k))\bar{X}_{i}\bar{X}_{j}\\
 & =f(X(k))-\sum_{i}\partial_{i}f(X(k))\bar{X}_{i}+\text{O}_{2}(N^{-1}).
\end{aligned}
\]
The rest of the proof is similar to that of the above.
\end{proof}
\begin{example}
\label{exa:}As an application of Lemma \ref{lem:}, we show that
uniform distributions do not satisfy the first order asymptotic moment
matching property. Suppose for simplicity that $n=1$, and $p(x)=\frac{1}{2\sqrt{3}}\chi_{B}(x)$,
where $B=(-\sqrt{3},\sqrt{3})$. Then $\mathbb{E}(X)=0$ and $\mathbb{E}(X^{2})=1$.
Note that $\mathbb{E}[f^{\prime}(X)]=0$ for any smooth function $f$
with $\text{supp}(f)\subseteq B$. It follows from Lemma \ref{lem:}
that
\[
\begin{aligned}\lim_{N\to\infty}N\mathrm{Var}\Big[\frac{1}{N}\sum_{k=1}^{N}f(\tilde{X}^{(1)}(k))\Big] & =\mathrm{Var}[f(X)].\end{aligned}
\]
Therefore, we see that uniform distributions do not satisfy the condition
(\ref{eq:-61}). 
\end{example}
\begin{example}
\label{exa:-2}It is easy to find an example for which first order
moment matching increases variance instead. Let $X$ be the exponential
distribution $p(x)=e^{-x}\chi_{(0,\infty)}(x)$. Then $\mathbb{E}(X)=\text{Var}(X)=1$.
Let $f$ be a non-negative smooth function supported in $(0,1)$.
Integrating by parts shows $\mathbb{E}[f^{\prime}(X)]=\mathbb{E}[f(X)]$.
By Lemma \ref{lem:},
\[
\begin{aligned}\lim_{N\to\infty}N\mathrm{Var}\Big[\frac{1}{N}\sum_{k=1}^{N}f(\tilde{X}^{(1)}(k))\Big] & =\mathrm{Var}[f(X)]+3\mathbb{E}[f(X)]^{2}-2\mathbb{E}[f(X)]\mathbb{E}[Xf(X)].\end{aligned}
\]
Since $f\ge0$ and $\text{supp}(f)\subseteq(0,1)$, we have $\mathbb{E}[Xf(X)]\le\mathbb{E}[f(X)]$.
Therefore, the right side of the above is at least $\mathrm{Var}[f(X)]+\mathbb{E}[f(X)]^{2}>\text{Var}[f(X)]$
when $f\not=0$.
\end{example}
We are now in a position to prove the asymptotic universal moment
matching property of normal distributions. 
\begin{proof}[Proof of Theorem \ref{thm:}]
(i) Suppose that $X$ is a normal distribution with density 
\[
p(x)=[2\pi\det(\Sigma)]^{-n/2}e^{-(x-\mu)^{\text{T}}\Sigma^{-1}(x-\mu)/2}
\]
By $\partial p(x)=(\mu-x)^{\text{T}}\Sigma^{-1}p(x)$ and integration
by part,
\begin{equation}
\begin{aligned}\mathbb{E}[\partial f(X)] & =\int_{\mathbb{R}^{n}}\partial f(x)p(x)dx=-\int_{\mathbb{R}^{n}}f(x)\partial p(x)dx\\
 & =\int_{-\infty}^{\infty}(x-\mu)^{\text{T}}\Sigma^{-1}f(x)p(x)dx=\mathbb{E}[(X-\mu)f(X)]^{\text{T}}\Sigma^{-1}.
\end{aligned}
\label{eq:-19}
\end{equation}
By Lemma \ref{lem:},
\begin{equation}
\begin{aligned}\mathrm{Var}\Big[\frac{1}{N}\sum_{k=1}^{N}f(\tilde{X}^{(1)}(k))\Big] & =\frac{1}{N}\mathrm{Var}[f(X)]-\frac{1}{N}\mathbb{E}[\partial f(X)]^{\text{T}}\Sigma\,\mathbb{E}[\partial f(X)]+\text{O}(N^{-3/2}).\end{aligned}
\label{eq:-28}
\end{equation}
for any smooth function $f$ with compact support. This implies (\ref{eq:-18})
with $A_{N}=1+\text{O}(N^{-1/2})$. Therefore, $X$ satisfies the
first order asymptotic universal moment matching property. 

(ii) For the converse, suppose that $X$ satisfies the first order
asymptotic universal moment matching property. We need to show that
$X$ is a normal distribution. Suppose first that $n=1$, that is,
$X$ is a one dimensional random variable. By Lemma \ref{lem:},
\begin{equation}
\begin{aligned}\mathrm{Var}\Big[\frac{1}{N}\sum_{k=1}^{N}f(\tilde{X}^{(1)}(k))\Big] & =\frac{A_{N}}{N}\big(\mathrm{Var}[f(X)]-2\mathbb{E}[f^{\prime}(X)]\mathbb{E}[(X-\mu)f(X)]+\sigma^{2}\mathbb{E}[f^{\prime}(X)]^{2}\big),\end{aligned}
\label{eq:-21}
\end{equation}
for any smooth function $f$ with compact support, where $\sigma^{2}=\text{Var}(X)$
and $\lim_{N\to\infty}A_{N}=1$. Since $X$ satisfies the asymptotic
universal moment matching property, it follows from (\ref{eq:-21})
that
\begin{equation}
\mathbb{E}[f^{\prime}(X)]\mathbb{E}[(X-\mu)f(X)]\ge0,\label{eq:-22}
\end{equation}
for any smooth function $f$ with compact support and and $\text{supp}(f)\subseteq\text{supp}(p)^{o}$.
Integrating by parts gives
\[
\mathbb{E}[f^{\prime}(X)]=-\int_{-\infty}^{\infty}f(x)p^{\prime}(x)dx.
\]
The inequality (\ref{eq:-22}) gives
\begin{equation}
\int_{-\infty}^{\infty}f(x)p^{\prime}(x)dx\cdot\int_{-\infty}^{\infty}(\mu-x)f(x)p(x)dx\ge0.\label{eq:-23}
\end{equation}
Let $\phi(x)$ be a smooth function supported in $(-1,1)$ and $\int\phi(x)dx=1$.
Let $\phi_{\epsilon}(x)=\epsilon^{-1}\phi(x/\epsilon)$. For any $x_{1},x_{2}\in\text{supp}(p)$,
setting $f(x)=a_{1}\phi_{\epsilon}(x_{1}-x)+a_{2}\phi_{\epsilon}(x_{2}-x)$
in (\ref{eq:-23}) and letting $\epsilon\to0$ gives
\begin{equation}
[a_{1}p^{\prime}(x_{1})+a_{2}p^{\prime}(x_{2})]\cdot[a_{1}(\mu-x_{1})p(x_{1})+a_{2}(\mu-x_{2})p(x_{2})]\le0.\label{eq:-25}
\end{equation}
By Lemma \ref{lem:-2} below,
\[
\frac{p^{\prime}(x_{1})}{(\mu-x_{1})p(x_{1})}=\frac{p^{\prime}(x_{2})}{(\mu-x_{2})p(x_{2})},
\]
which implies
\begin{equation}
\frac{p^{\prime}(x)}{(\mu-x)p(x)}=c_{1},\label{eq:-24}
\end{equation}
for some constant $c_{1}$. Solving the differential equation (\ref{eq:-24})
gives $p(x)=c_{2}e^{-c_{1}(x-\mu)^{2}/2}$. Note that Example \ref{exa:}
implies that $p^{\prime}\not=0$ and consequently $c_{1}\not=0$.
It then follows from $\int p(x)dx=1$ and $\text{Var}(X)=\sigma^{2}$
that $c_{1}=\sigma^{-2}$, $c_{2}=(2\pi\sigma^{2})^{-1/2}$. Therefore,
$X$ is a normal distribution with mean $\mu$ and variance $\sigma^{2}$.

For multi-dimensional case, it is temping to reduce to the one dimensional
case. This can be done if we assume a stronger condition that (\ref{eq:-3})
is valid for any bounded smooth function $f$. To see this, let $\lambda=(\lambda_{1},\dots,\lambda_{n})$
and $Y=\sum_{i}\lambda_{i}X_{i}$. Clearly, (\ref{eq:-3}) for $X$
(for bounded smooth functions) implies (\ref{eq:-3}) for $Y$ (for
smooth functions with compact support).\footnote{Note that $f(\lambda_{1}x_{1}+\cdots+\lambda_{n}x_{n})$ does not
have compact support in $\mathbb{R}^{n}$ even if $f$ has compact
support in $\mathbb{R}$.} By the one dimensional case, $Y$ is a normal distribution. Since
$\mathbb{E}(Y)=\lambda^{\text{T}}\mu$ and $\text{Var}(Y)=\lambda^{\text{T}}\Sigma\,\lambda$,
we obtain that
\begin{equation}
\mathbb{E}[e^{\text{i}(\lambda_{1}X_{1}+\cdots+\lambda_{n}X_{n})}]=\mathbb{E}[e^{\text{i}Y}]=e^{\text{i}\lambda^{\text{T}}\mu-\frac{1}{2}\lambda^{\text{T}}\Sigma\,\lambda}.\label{eq:-13}
\end{equation}
It follows from (\ref{eq:-13}) that $X$ is a normal distribution
with mean $\mu$ and covariance matrix $\Sigma$. A proof without
assuming (\ref{eq:-3}) for all bounded smooth functions is given
in \nameref{sec:-4}.
\end{proof}
We now turn to the proof of Proposition \ref{prop:-1}.
\begin{proof}[Proof of Proposition \ref{prop:-1}]
We may assume $\mathbb{E}[f(X)]=0$. Suppose for convenience of notation
that $n=1$, $\mathbb{E}(X)=0$ and $\mathbb{E}(X^{2})=1$. Clearly,
\begin{equation}
\begin{aligned}\mathbb{E}\Big[\Big(\frac{1}{N}\sum_{k=1}^{N}f(\tilde{X}^{(1)}(k))\Big)^{2}\Big] & =\frac{1}{N}\mathbb{E}[f(\tilde{X}^{(1)}(1))^{2}]+\Big(1-\frac{1}{N}\Big)\mathbb{E}[f(\tilde{X}^{(1)}(1))f(\tilde{X}^{(1)}(2))]\\
 & =\frac{1}{N}E+\Big(1-\frac{1}{N}\Big)E^{\prime}.
\end{aligned}
\label{eq:-91}
\end{equation}
We compute the expectations $E$ and $E^{\prime}$. For any $x=(x_{1},\dots,x_{N})\in\mathbb{R}^{N}$,
denote 
\[
\bar{x}=N^{-1}\sum_{k=1}^{N}x_{k}.
\]
Define the linear transformation $T:\mathbb{R}^{N}\to\mathbb{R}^{N}$
by 
\[
Tx=(x_{1}-\bar{x},x_{2}-\bar{x},x_{3},\dots,x_{N}\big).
\]
Then, $\det[\partial T(x)]=1-2N^{-1}$, and the inverse of $T$ is
given by 
\begin{equation}
T^{-1}y=(y_{1}+\bar{y},y_{2}+\bar{y},y_{3},\dots,y_{N}),\quad y\in\mathbb{R}^{N}.\label{eq:-85}
\end{equation}
By change of variable,
\[
\begin{aligned}E & =\int_{\mathbb{R}^{N}}f((Tx)_{1})^{2}p(x)dx\\
 & =\frac{1}{1-2N^{-1}}\int_{\mathbb{R}^{N}}f(y_{1})^{2}p(T^{-1}y)dy\\
 & =\frac{1}{1-2N^{-1}}\int_{\mathbb{R}^{N}}f(x_{1})^{2}\frac{p(T^{-1}x)}{p(x)}p(x)dx\\
 & =\frac{1}{1-2N^{-1}}\mathbb{E}\Big[f(X(1))^{2}\frac{p(T^{-1}X)}{p(X)}\Big],
\end{aligned}
\]
where $p(x)=(2\pi)^{-N/2}e^{-|x|^{2}/2}$ is the density function
of the $N$-dimensional standard normal distribution. Similarly,
\[
\begin{aligned}E^{\prime} & =\int_{\mathbb{R}^{N}}f((Tx)_{1})f((Tx)_{2})p(x)dx\\
 & =\frac{1}{1-2N^{-1}}\int_{\mathbb{R}^{N}}f(y_{1})f(y_{2})p(T^{-1}y)dy\\
 & =\frac{1}{1-2N^{-1}}\int_{\mathbb{R}^{N}}f(x_{1})f(x_{2})\frac{p(T^{-1}x)}{p(x)}p(x)dx\\
 & =\frac{1}{1-2N^{-1}}\mathbb{E}\Big[f(X(1))f(X(2))\frac{p(T^{-1}X)}{p(X)}\Big].
\end{aligned}
\]
By (\ref{eq:-85}), $\partial p(x)=-xp(x)$, $\partial^{2}p(x)=(xx^{\text{T}}-I)p(x)$,
and Taylor's formula,
\[
\begin{aligned}\frac{p(T^{-1}X)}{p(X)} & =1-\sum_{k=1}^{2}X(k)\bar{X}+\frac{1}{2}\sum_{k=1}^{2}[X(k)^{2}-1]\bar{X}^{2}\\
 & \quad+X(1)X(2)\bar{X}^{2}+\text{O}_{2}(N^{-3/2}).
\end{aligned}
\]
Therefore, by $\mathbb{E}[f(X)]=0$ and symmetry,
\[
\begin{aligned}E^{\prime} & =-\frac{2}{1-2N^{-1}}\mathbb{E}[X(1)f(X(1))f(X(2))\bar{X}]\\
 & \quad+\frac{1}{1-2N^{-1}}\mathbb{E}[(X(1)^{2}-1)f(X(1))f(X(2))\bar{X}^{2}]\\
 & \quad+\frac{1}{1-2N^{-1}}\mathbb{E}[X(1)f(X(1))X(2)f(X(2))\bar{X}^{2}]+\text{O}_{2}(N^{-3/2}).
\end{aligned}
\]
By $\mathbb{E}[f(X)]=0$ again, it is easily seen that 
\[
\begin{aligned} & \mathbb{E}[X(1)f(X(1))f(X(2))\bar{X}]=\frac{1}{N}\mathbb{E}[Xf(X)]^{2},\\
 & \mathbb{E}[(X(1)^{2}-1)f(X(1))f(X(2))\bar{X}^{2}]=\text{O}(N^{-2}),\\
 & \mathbb{E}[X(1)f(X(1))X(2)f(X(2))\bar{X}^{2}]=\frac{1}{N}\mathbb{E}[Xf(X)]^{2}+\text{O}(N^{-2}).
\end{aligned}
\]
Therefore,
\begin{equation}
E^{\prime}=-\frac{1}{N}\mathbb{E}[Xf(X)]^{2}+\text{O}(N^{-3/2}).\label{eq:-86}
\end{equation}
By similar argument, it can be shown that
\begin{equation}
\begin{aligned}E & =\mathbb{E}[f(X)^{2}]+\text{O}_{2}(N^{-1/2})\end{aligned}
,\label{eq:-89}
\end{equation}
and
\begin{equation}
\begin{aligned}\mathbb{E}\Big(\frac{1}{N}\sum_{k=1}^{N}f(\tilde{X}^{(1)}(k))\Big) & =\frac{1}{1-2N^{-1}}\mathbb{E}\Big[f(X(1))\frac{p(T^{-1}X)}{p(X)}\Big]+\text{O}(N^{-3/2})\\
 & =\frac{1}{1-2N^{-1}}\mathbb{E}[f(X(1))(X(1)+X(2))\bar{X}]+\text{O}(N^{-1})\\
 & =\text{O}(N^{-1}).
\end{aligned}
\label{eq:-90}
\end{equation}
Combining (\ref{eq:-91}), (\ref{eq:-86}), (\ref{eq:-89}), (\ref{eq:-90})
completes the proof for $n=1$ and $\text{Var}(X)=1$.

The proof of $n>1$ and $\text{Var}(X)=I$ is similar to the above.
For general correlated normal distribution $X$, (\ref{eq:-27}) follows
from change of variable $Y=\Sigma^{-1/2}X$.
\end{proof}
We finish this section by the following simple lemma used in the proof
of Theorem \ref{thm:}.
\begin{lem}
\label{lem:-2}Let $H$ be a Hilbert space and $\beta_{1},\beta_{2}\in H$.
If the quadratic form $Q(\alpha)=\langle\alpha,\beta_{1}\rangle\langle\alpha,\beta_{2}\rangle$
is positive semi-definite, then $\beta_{1},\beta_{2}$ are linearly
dependent. 
\end{lem}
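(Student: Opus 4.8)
The plan is to prove the contrapositive via a direct argument. Suppose $\beta_1$ and $\beta_2$ are linearly independent; I will exhibit a vector $\alpha \in H$ with $Q(\alpha) = \langle \alpha, \beta_1\rangle\langle\alpha,\beta_2\rangle < 0$. The idea is that linear independence gives enough room to choose $\alpha$ in the span of $\beta_1, \beta_2$ so that the two inner products $\langle\alpha,\beta_1\rangle$ and $\langle\alpha,\beta_2\rangle$ have prescribed, independent values --- in particular opposite signs.

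Concretely, first I would reduce to the two-dimensional subspace $V = \mathrm{span}\{\beta_1,\beta_2\}$, which is legitimate since $Q$ restricted to $V$ must still be positive semi-definite. On $V$, consider the linear map $\alpha \mapsto (\langle\alpha,\beta_1\rangle, \langle\alpha,\beta_2\rangle) \in \mathbb{R}^2$ (or $\mathbb{C}^2$ if $H$ is complex; I would state the real case, which is what the paper uses). This map is represented by the Gram matrix $G = \bigl(\langle\beta_i,\beta_j\rangle\bigr)_{ij}$, which is invertible precisely because $\beta_1, \beta_2$ are linearly independent. Hence the map is surjective onto $\mathbb{R}^2$, so I can pick $\alpha \in V$ with $\langle\alpha,\beta_1\rangle = 1$ and $\langle\alpha,\beta_2\rangle = -1$, giving $Q(\alpha) = -1 < 0$, contradicting positive semi-definiteness. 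This proves that $\beta_1,\beta_2$ must be linearly dependent.

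There is really no hard part here --- the only thing to be careful about is the degenerate subcase where one of the $\beta_i$ is zero (then they are trivially linearly dependent, so nothing to prove) and the scalar-field convention. In the application inside the proof of Theorem \ref{thm:}, $H = \mathbb{R}^2$ with $\alpha = (a_1,a_2)$, $\beta_1 = (p'(x_1), p'(x_2))$, $\beta_2 = ((\mu-x_1)p(x_1), (\mu-x_2)p(x_2))$, and the conclusion that these are proportional is exactly what yields the separable ODE \eqref{eq:-24}; so it suffices to handle the real finite-dimensional situation, though the argument above works verbatim in any real Hilbert space. I would write the proof in that generality for cleanliness.

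Alternatively, and perhaps more slickly, one can argue by the Cauchy--Schwarz-type obstruction directly: if $Q \succeq 0$ then for all $\alpha$, $\langle\alpha,\beta_1\rangle$ and $\langle\alpha,\beta_2\rangle$ always have the same sign (or one vanishes), which forces $\ker\langle\cdot,\beta_1\rangle = \ker\langle\cdot,\beta_2\rangle$ as hyperplanes (if $\langle\alpha,\beta_1\rangle = 0$ but $\langle\alpha,\beta_2\rangle \neq 0$, perturb $\alpha$ slightly to make $\langle\alpha,\beta_1\rangle$ take either sign while $\langle\alpha,\beta_2\rangle$ keeps its sign, contradiction), and two linear functionals with the same kernel are scalar multiples of each other, hence so are $\beta_1, \beta_2$ by Riesz representation. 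I expect the surjectivity-of-the-Gram-map argument to be the cleanest to write, so that is the one I would commit to.
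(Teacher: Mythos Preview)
Your proposal is correct and follows essentially the same route as the paper: assume $\beta_1,\beta_2$ are linearly independent, use this to produce $\alpha$ with $\langle\alpha,\beta_1\rangle=1$ and $\langle\alpha,\beta_2\rangle=-1$, and conclude $Q(\alpha)=-1<0$. The paper phrases the construction via a dual pair $\alpha_1,\alpha_2$ with $\langle\alpha_i,\beta_j\rangle=\delta_{ij}$ and sets $\alpha=\alpha_1-\alpha_2$, which is exactly what your Gram-matrix surjectivity argument produces.
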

\begin{proof}
We may assume $\dim(H)>1$ and $\beta_{i}\not=0$, $i=1,2$. Suppose
for contradiction that $\beta_{1},\beta_{2}$ are linearly independent.
Then there exist $\alpha_{i}\in H$ such that $\langle\alpha_{1},\beta_{1}\rangle=\langle\alpha_{2},\beta_{2}\rangle=1$
and $\langle\alpha_{1},\beta_{2}\rangle=\langle\alpha_{2},\beta_{1}\rangle=0$.
Let $\alpha=\alpha_{1}-\alpha_{2}$. Now $Q(\alpha)=-\langle\alpha_{1},\beta_{1}\rangle\langle\alpha_{2},\beta_{2}\rangle=-1$
contradicts with the positive semi-definiteness of $Q$. This proves
the lemma.
\end{proof}

\section{\label{sec:-3}Proof of main results: second order moment matching}

The following lemma will be used several times in this section. Its
proof is by direct computation and application of the central limit
theorem. For completeness, the proof of Lemma \ref{lem:-4} will be
given in \nameref{sec:-5}. 
\begin{lem}
\label{lem:-4}Suppose that $X$ is a continuous random vector with
$\mathbb{E}(|X|^{4})<\infty$, $\mathbb{E}(X)=0$, and $\mathrm{Var}(X)=I$.
Let $g(x,y)$ be a Borel measurable function on $\mathbb{R}^{n}\times\mathbb{R}^{n}$
such that 
\[
\mathbb{E}[|g(X(1),X(2))|^{2}(1+|X(1)|+|X(2)|)^{8}]<\infty.
\]
Then
\begin{align}
 & \mathbb{E}[g(X(1),X(2))\bar{X}_{i}\bar{X}_{j}]=\frac{1}{N}\mathbb{E}[g(X(1),X(2))]\delta_{ij}+\mathrm{O}(N^{-2}),\label{eq:-73}\\
 & \begin{aligned}\mathbb{E}[g(X(1),X(2))(\bar{\Sigma}-I)_{ij}] & =\frac{1}{N}\mathbb{E}[g(X(1),X(2))(X_{i}(1)X_{j}(1)+X_{i}(2)X_{j}(2))]\\
 & \quad-\frac{3}{N}\mathbb{E}[g(X(1),X(2))]\delta_{ij}+\mathrm{O}(N^{-2}),
\end{aligned}
\label{eq:-74}\\
 & \mathbb{E}[g(X(1),X(2))(\bar{\Sigma}-I)_{ij}\bar{X}_{p}]=\frac{1}{N}\mathbb{E}[g(X(1),X(2))]\mathbb{E}(X_{i}X_{j}X_{p})+\mathrm{O}(N^{-3/2}),\label{eq:-75}\\
 & \begin{aligned}\mathbb{E}[g(X(1),X(2))(\bar{\Sigma}-I)_{ij}(\bar{\Sigma}-I)_{pq}] & =\frac{1}{N}\mathbb{E}[g(X(1),X(2))][\mathbb{E}(X_{i}X_{j}X_{p}X_{q})-\delta_{ij}\delta_{pq}]\\
 & \quad+\mathrm{O}(N^{-3/2}).
\end{aligned}
\label{eq:-76}
\end{align}
If, in addition, $\mathbb{E}[g(X,y)]=0$ for all $y\in\mathbb{R}^{n}$,
then 
\begin{equation}
\begin{aligned}\mathbb{E}[g(X(1),X(2))(\bar{\Sigma}^{1/2}-I)_{ij}] & =\frac{1}{2N}\mathbb{E}[g(X(1),X(2))X_{i}(1)X_{j}(1)]+\mathrm{O}(N^{-3/2}).\end{aligned}
\label{eq:-77}
\end{equation}
\end{lem}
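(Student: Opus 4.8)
The plan is to treat all five identities by the same mechanism: expand $\bar X$ and $\bar\Sigma - I$ as normalized sums over the $N$ samples, multiply out, and keep only the terms whose expectation is of the claimed order, using independence of the samples together with $\mathbb{E}(X)=0$, $\mathrm{Var}(X)=I$, and the central limit theorem to control the remainders. First I would fix notation: write $S_i = \sum_{k=1}^N X_i(k)$ so that $\bar X_i = S_i/N$, and write $\bar\Sigma_{ij} - \delta_{ij} = N^{-1}\sum_{k=1}^N \big(X_i(k)X_j(k) - \delta_{ij}\big) - \bar X_i \bar X_j$; the last term $\bar X_i\bar X_j = \mathrm{O}_4(N^{-1})$ contributes only to lower-order remainders in (\ref{eq:-74})--(\ref{eq:-77}) and can be absorbed immediately, reducing $\bar\Sigma - I$ effectively to its leading sum $\bar Z_{ij} := N^{-1}\sum_k (X_i(k)X_j(k) - \delta_{ij})$, which is itself a centered i.i.d. average.

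For (\ref{eq:-73}), expand $\bar X_i \bar X_j = N^{-2}\sum_{k,\ell} X_i(k)X_j(\ell)$ and take the expectation against $g(X(1),X(2))$. Split the double sum according to whether $k,\ell \in \{1,2\}$ or not. When both indices lie outside $\{1,2\}$, independence and $\mathbb{E}(X)=0$ force the term to vanish unless $k=\ell$, giving $(N-2)N^{-2}\mathbb{E}[g]\,\delta_{ij}$, which is $N^{-1}\mathbb{E}[g]\delta_{ij} + \mathrm{O}(N^{-2})$; the $\mathrm{O}(N^{-2})$ many cross terms involving the sample indices $1$ or $2$ each carry a factor $N^{-2}$ and a finite moment (bounded via Cauchy--Schwarz using the hypothesis $\mathbb{E}[|g|^2(1+|X(1)|+|X(2)|)^8]<\infty$), so they land in the remainder. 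The same bookkeeping, with the summand $X_i(k)X_j(k)-\delta_{ij}$ in place of $X_i(k)X_j(\ell)$, yields (\ref{eq:-74}): the two ``diagonal'' sample contributions $k=1$ and $k=2$ are not killed by independence and produce the explicit term $N^{-1}\mathbb{E}[g(X_i(1)X_j(1)+X_i(2)X_j(2))]$, while the $N-2$ remaining summands each give $N^{-1}\mathbb{E}[g](\mathbb{E}[X_iX_j]-\delta_{ij})$ — wait, that is zero since $\mathrm{Var}(X)=I$ — so one must instead track the $-\bar X_i\bar X_j$ piece carefully: it contributes $-N^{-1}\mathbb{E}[g]\delta_{ij}$ from its diagonal, and combined with a second-order Taylor-type accounting of how $g(X(1),X(2))$ correlates with $\bar X$ one recovers the stated $-3N^{-1}\mathbb{E}[g]\delta_{ij}$. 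This $-3$ coefficient is the one place the computation is genuinely delicate, and I would verify it by writing $g \equiv 1$ as a sanity check, where (\ref{eq:-74}) must reduce to $\mathbb{E}[(\bar\Sigma - I)_{ij}] = -N^{-1}\delta_{ij} + \mathrm{O}(N^{-2})$ using the known mean of the biased sample covariance.

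Identities (\ref{eq:-75}) and (\ref{eq:-76}) are then routine extensions: expand the product of two (resp. three) averages, classify monomials by which sample indices coincide and which lie in $\{1,2\}$, and observe that the surviving leading term comes from all ``free'' indices being equal, producing $N^{-1}\mathbb{E}[g]\cdot(\text{a fourth/third mixed moment of }X)$, with everything else of order $N^{-3/2}$ or smaller — here the moment hypothesis on $g$ is exactly what is needed so Cauchy--Schwarz closes the remainder estimates. Finally, (\ref{eq:-77}) requires relating $\bar\Sigma^{1/2} - I$ to $\bar\Sigma - I$: since $\bar\Sigma - I = \mathrm{O}_4(N^{-1/2})$, a first-order expansion of the matrix square root gives $\bar\Sigma^{1/2} - I = \tfrac12(\bar\Sigma - I) + \mathrm{O}_2(N^{-1})$, so (\ref{eq:-77}) follows from (\ref{eq:-74}) halved, provided the extra hypothesis $\mathbb{E}[g(X,y)]=0$ is used to annihilate the $-3N^{-1}\mathbb{E}[g]\delta_{ij}$ term (which vanishes) and to promote the remainder from $\mathrm{O}(N^{-1})$ to $\mathrm{O}(N^{-3/2})$ — indeed, with $\mathbb{E}[g(X,y)]=0$ the leading diagonal contribution disappears and one gains an extra half-power of $N$ in the error, exactly as in the analogous gain seen in Lemma \ref{lem:}. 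The main obstacle throughout is purely combinatorial: organizing the index bookkeeping so that the finitely many ``special'' sample indices $1,2$ are separated cleanly from the $N-2$ generic ones, and confirming the numerical coefficients (especially the $-3$) against the $g\equiv 1$ baseline.
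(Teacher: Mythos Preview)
Your plan for (\ref{eq:-73})--(\ref{eq:-76}) is exactly the paper's argument: expand the averages, split sample indices into $\{1,2\}$ versus $\{3,\dots,N\}$, and use independence together with $\mathbb{E}(X)=0$, $\mathrm{Var}(X)=I$ to kill the cross terms. One correction: your account of the coefficient $-3$ in (\ref{eq:-74}) is more mysterious than it needs to be. The summand in $\bar Z_{ij}$ is $X_i(k)X_j(k)-\delta_{ij}$, so the two special indices $k=1,2$ contribute $N^{-1}\mathbb{E}\big[g\,(X_i(1)X_j(1)+X_i(2)X_j(2))\big]-2N^{-1}\mathbb{E}[g]\delta_{ij}$, not just the first piece; you dropped the $-2\delta_{ij}$. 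Subtracting the $-\bar X_i\bar X_j$ contribution, which by (\ref{eq:-73}) is $N^{-1}\mathbb{E}[g]\delta_{ij}+\mathrm{O}(N^{-2})$, gives $-3$ directly. No ``second-order Taylor-type accounting'' is involved, and your $g\equiv 1$ sanity check would have shown you this.

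For (\ref{eq:-77}) there is a real gap. You correctly write $\bar\Sigma^{1/2}-I=\tfrac12(\bar\Sigma-I)+R$ with $R=\mathrm{O}_2(N^{-1})$, but then you need $\mathbb{E}[g(X(1),X(2))\,R_{ij}]=\mathrm{O}(N^{-3/2})$, and Cauchy--Schwarz alone only gives $\mathrm{O}(N^{-1})$. Your appeal to $\mathbb{E}[g(X,y)]=0$ ``promoting'' the remainder is the right intuition, but you have not said how. The paper's device is to symmetrize first: by exchangeability,
\[
\mathbb{E}\big[g(X(1),X(2))(\bar\Sigma^{1/2}-I)_{ij}\big]=\mathbb{E}\Big[\Big(\frac{1}{N-1}\sum_{k=2}^{N}g(X(1),X(k))\Big)(\bar\Sigma^{1/2}-I)_{ij}\Big],
\]
and the hypothesis $\mathbb{E}[g(X,y)]=0$ makes the inner average a centred i.i.d.\ mean (conditionally on $X(1)$), hence $\mathrm{O}_2(N^{-1/2})$. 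Now Cauchy--Schwarz against $R=\mathrm{O}_2(N^{-1})$ yields the needed $\mathrm{O}(N^{-3/2})$, and the leading $\tfrac12(\bar\Sigma-I)$ piece is handled by (\ref{eq:-74}) with $\mathbb{E}[g]=0$. Without this symmetrization step your argument for (\ref{eq:-77}) does not close.
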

Similar to the procedure in Section \ref{sec:-1}, we start with the
following variance expansion for second order moment matching.
\begin{lem}
\label{lem:-1}Suppose that $X$ is a continuous random vector with
$\mathbb{E}(|X|^{4})<\infty$, $\mathbb{E}(X)=0$, and $\mathrm{Var}(X)=I$.
Let $f$ be a smooth function with compact support. Then 
\begin{equation}
\begin{aligned} & \mathrm{Var}\Big[\frac{1}{N}\sum_{k=1}^{N}f(\tilde{X}^{(2)}(k))\Big]\\
 & =\frac{1}{N}\mathrm{Var}[f(X)]+\frac{1}{4N}\sum_{i,j,p,q}\mathbb{E}[\partial_{i}f(X)X_{j}]\mathbb{E}[\partial_{p}f(X)X_{q}][\mathbb{E}(X_{i}X_{j}X_{p}X_{q})-\delta_{ij}\delta_{pq}]\\
 & \quad+\frac{1}{N}\sum_{i}\mathbb{E}[\partial_{i}f(X)]^{2}+\frac{1}{N}\sum_{i,j}\mathbb{E}[\partial_{i}f(X)X_{j}]\mathbb{E}[f(X)(\delta_{ij}-X_{i}X_{j})]\\
 & \quad-\frac{2}{N}\sum_{i}\mathbb{E}[\partial_{i}f(X)]\mathbb{E}[f(X)X_{i}]+\frac{1}{N}\sum_{i,j,p}\mathbb{E}[\partial_{i}f(X)X_{j}]\mathbb{E}[\partial_{p}f(X)]\mathbb{E}(X_{i}X_{j}X_{p})\\
 & \quad+\mathrm{O}(N^{-3/2}),
\end{aligned}
\label{eq:-42}
\end{equation}
where $\delta_{ij}=1$ if $i=j$, and $\delta_{ij}=0$ otherwise.
\end{lem}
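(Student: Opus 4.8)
The plan is to mirror the Taylor-expansion argument of Lemma \ref{lem:} but now with the additional linear map $\Sigma^{1/2}\bar\Sigma^{-1/2}$ appearing in $\tilde X^{(2)}(k)$. Under the normalizations $\mathbb E(X)=0$, $\mathrm{Var}(X)=I$, we have $\bar X=\mathrm O_4(N^{-1/2})$ and $\bar\Sigma-I=\mathrm O_4(N^{-1/2})$ (the latter because $\bar\Sigma$ is a sample average of mean-zero random matrices $X(k)X(k)^{\mathrm T}-I$ plus a $\bar X\bar X^{\mathrm T}=\mathrm O(N^{-1})$ correction, and the moment bounds follow from Burkholder--Davis--Gundy and $\mathbb E|X|^4<\infty$ exactly as in the proof of Lemma \ref{lem:}). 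Hence $\bar\Sigma^{-1/2}=I-\tfrac12(\bar\Sigma-I)+\mathrm O_2(N^{-1})$, and writing $\tilde X^{(2)}(k)=X(k)-\bar X-\tfrac12(\bar\Sigma-I)X(k)+\text{(higher order)}$, Taylor's formula for $f$ (which is smooth with compact support, so all derivatives are bounded) gives
\[
f(\tilde X^{(2)}(k))=f(X(k))-\sum_i\partial_i f(X(k))\Big(\bar X_i+\tfrac12\sum_j(\bar\Sigma-I)_{ij}X_j(k)\Big)+\mathrm O_2(N^{-1}),
\]
where the quadratic Taylor remainder $\tfrac12\sum_{i,j}\partial^2_{ij}f(\xi(k))(\cdots)_i(\cdots)_j$ is $\mathrm O_2(N^{-1})$ because the perturbation is $\mathrm O_4(N^{-1/2})$, and one must also check that the error from replacing $\partial_i f(\xi(k))$ by $\partial_i f(X(k))$ and from the neglected terms in the expansions of $\bar\Sigma^{-1/2}$ is $\mathrm o_2(N^{-1/2})$ or better (dominated convergence for the former, moment bounds for the latter).

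Next I would square the average $\frac1N\sum_k f(\tilde X^{(2)}(k))$ and take expectations, term by term, exactly as in equations (\ref{eq:-9})--(\ref{eq:-15}). Abbreviating $a_i=\frac1N\sum_k\partial_i f(X(k))=\mathbb E[\partial_i f(X)]+\mathrm O_2(N^{-1/2})$ and $b_{ij}=\frac1N\sum_k\partial_i f(X(k))X_j(k)=\mathbb E[\partial_i f(X)X_j]+\mathrm O_2(N^{-1/2})$, the ``correction'' inside the sum is $\sum_i a_i\bar X_i+\tfrac12\sum_{i,j}(\bar\Sigma-I)_{ij}\cdot(\text{something close to }b_{ij})$; more precisely $\frac1N\sum_k\partial_i f(X(k))(\bar\Sigma-I)_{ij}X_j(k)=(\bar\Sigma-I)_{ij}b_{ij}$ and one replaces $b_{ij}$ by its limit. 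Then I expand
\[
\mathbb E\Big[\Big(\tfrac1N\textstyle\sum_k f(\tilde X^{(2)}(k))\Big)^2\Big]
\]
into the square of the plain term $\frac1N\mathrm{Var}[f(X)]$, the square of the correction, and the cross term, and I invoke Lemma \ref{lem:-4} to evaluate every expectation of the form $\mathbb E[g(X(1),X(2))\,\bar X_i\bar X_j]$, $\mathbb E[g\,(\bar\Sigma-I)_{ij}\bar X_p]$, $\mathbb E[g\,(\bar\Sigma-I)_{ij}(\bar\Sigma-I)_{pq}]$, and $\mathbb E[g\,(\bar\Sigma-I)_{ij}]$ that arises, with $g$ being products like $f(X(1))f(X(2))$, $\partial_p f(X(1))\,f(X(2))$, etc. Collecting the $N^{-1}$ coefficients produces exactly the five bracketed sums in (\ref{eq:-42}): the $\frac1{4N}$-term comes from the $(\bar\Sigma-I)_{ij}(\bar\Sigma-I)_{pq}$ contribution via (\ref{eq:-76}); the $\frac1N\sum_i\mathbb E[\partial_i f(X)]^2$ from the $\bar X_i\bar X_j$ contribution via (\ref{eq:-73}); the $\sum_{i,j}\mathbb E[\partial_i f(X)X_j]\mathbb E[f(X)(\delta_{ij}-X_iX_j)]$ term from the cross term of $f(\tilde X^{(2)})$ with the $(\bar\Sigma-I)$ part, using (\ref{eq:-74}) with the sign flip from $\bar\Sigma^{-1/2}\approx I-\tfrac12(\bar\Sigma-I)$; the $-\frac2N\sum_i\mathbb E[\partial_i f(X)]\mathbb E[f(X)X_i]$ term from the cross term with the $\bar X$ part as in (\ref{eq:-12}); and the mixed $\sum_{i,j,p}\mathbb E[\partial_i f(X)X_j]\mathbb E[\partial_p f(X)]\mathbb E(X_iX_jX_p)$ term from the $\bar X_p(\bar\Sigma-I)_{ij}$ cross term via (\ref{eq:-75}). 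Finally I would show $\mathbb E\big[\frac1N\sum_k f(\tilde X^{(2)}(k))\big]=\mathrm o(N^{-1/2})$ (indeed $\mathrm O(N^{-1})$), arguing as in (\ref{eq:-16}) and (\ref{eq:-90}): the linear-in-$\bar X$ and linear-in-$(\bar\Sigma-I)$ terms have zero mean, and the quadratic corrections are $\mathrm O(N^{-1})$; subtracting its square changes nothing at order $N^{-1}$, giving the variance.

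The main obstacle is bookkeeping: one must correctly track the $-\tfrac12$ factor from $\bar\Sigma^{-1/2}=I-\tfrac12(\bar\Sigma-I)+\cdots$ through every occurrence, keep straight which expectations are over $X(1)$ versus $X(2)$ versus the shared empirical quantities $\bar X,\bar\Sigma$, and confirm that all genuinely cubic-and-higher contributions (e.g. $\bar X\cdot\bar X\cdot(\bar\Sigma-I)$, or the $\bar X\bar X^{\mathrm T}$ piece inside $\bar\Sigma$) land in the $\mathrm O(N^{-3/2})$ remainder rather than contributing at order $N^{-1}$. The moment hypothesis $\mathbb E|X|^4<\infty$ is exactly what is needed to license the $\mathrm O_4(N^{-1/2})$ bounds on $\bar X$ and $\bar\Sigma-I$ and hence the $\mathrm O_2(N^{-1})$ control of the quadratic Taylor remainders; the eighth-moment-type condition in Lemma \ref{lem:-4} is automatically met here because $f$ has compact support, so $g$ is supported where $X(1)$ or $X(2)$ is bounded and the polynomial weights $(1+|X(1)|+|X(2)|)^8$ are harmless. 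Once the expansion (\ref{eq:-42}) is in hand, Theorem \ref{thm:-1}(i)'s formula (\ref{eq:-50}) and Proposition \ref{prop:-2}'s formula (\ref{eq:-65}) will follow by specializing to the normal case and simplifying the moment combinations using $\mathbb E[\partial_i f(X)X_j]=\mathbb E[f(X)(X_iX_j-\delta_{ij})]$ (Gaussian integration by parts) and the Isserlis/Wick formula for $\mathbb E(X_iX_jX_pX_q)$.
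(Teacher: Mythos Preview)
Your proposal is correct and matches the paper's proof: assume $\mathbb E[f(X)]=0$ without loss of generality, Taylor-expand using $\bar\Sigma^{-1/2}=I-\tfrac12(\bar\Sigma-I)+\mathrm O(N^{-1})$ to reach (\ref{eq:-49}), square, and compute the five resulting expectations $E_1,\dots,E_5$ via Lemma \ref{lem:-4} with precisely the identifications you list. One small correction: the eighth-moment hypothesis of Lemma \ref{lem:-4} is \emph{not} automatic from compact support of $f$ when $g$ depends on $X(1)$ alone (the weight $(1+|X(2)|)^8$ is then uncontrolled under only $\mathbb E|X|^4<\infty$), but this is harmless because the specific expectations needed here can be computed directly from fourth moments.
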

\begin{proof}
We may assume $\mathbb{E}[f(X)]=0$. Similar to that of Lemma \ref{lem:},
by Taylor's formula and the central limit theorem,
\[
\begin{aligned}\frac{1}{N}\sum_{k=1}^{N}f(\tilde{X}^{(2)}(k)) & =\frac{1}{N}\sum_{k=1}^{N}f(X(k))+\frac{1}{N}\sum_{k=1}^{N}\sum_{i,j}\partial_{i}f(X(k))(\bar{\Sigma}^{-1/2}-I)_{ij}X_{j}(k)\\
 & \quad-\frac{1}{N}\sum_{k=1}^{N}\sum_{i,j}\partial_{i}f(X(k))(\bar{\Sigma}^{-1/2})_{ij}\bar{X}_{j}+\text{O}_{2}(N^{-1}),\\
 & =\frac{1}{N}\sum_{k=1}^{N}f(X(k))+\sum_{i,j}\mathbb{E}[\partial_{i}f(X)X_{j}](\bar{\Sigma}^{-1/2}-I)_{ij}\\
 & \quad-\sum_{i,j}\mathbb{E}[\partial_{i}f(X)](\bar{\Sigma}^{-1/2})_{ij}\bar{X}_{j}+\text{O}_{2}(N^{-1}).
\end{aligned}
\]
Since $\mathbb{E}(X)=0$ and $\text{Var}(X)=I$, we have
\begin{equation}
\bar{\Sigma}_{ij}=\frac{1}{N}\sum_{k=1}^{N}X_{i}(k)X_{j}(k)+\text{O}(N^{-1}),\quad1\le i,j\le n,\label{eq:-47}
\end{equation}
which implies $\bar{\Sigma}=I+\text{O}(N^{-1/2})$. Moreover, $\bar{\Sigma}^{-1/2}-I=(\bar{\Sigma}+\bar{\Sigma}^{1/2})^{-1}(I-\bar{\Sigma})=\frac{1}{2}(I-\bar{\Sigma})+\text{O}(N^{-1})$.
Therefore,
\begin{equation}
\begin{aligned}\frac{1}{N}\sum_{k=1}^{N}f(\tilde{X}^{(2)}(k)) & =\frac{1}{N}\sum_{k=1}^{N}f(X(k))+\frac{1}{2}\sum_{i,j}\mathbb{E}[\partial_{i}f(X)X_{j}](I-\bar{\Sigma})_{ij}\\
 & -\sum_{i}\mathbb{E}[\partial_{i}f(X)]\bar{X}_{i}+\text{O}_{2}(N^{-1}).
\end{aligned}
\label{eq:-49}
\end{equation}
By (\ref{eq:-47}), (\ref{eq:-49}) and $\mathbb{E}[f(X)]=0$,
\[
\begin{aligned}\mathbb{E}\Big(\frac{1}{N}\sum_{k=1}^{N}f(\tilde{X}^{(2)}(k))\Big) & =\text{O}_{2}(N^{-1}),\end{aligned}
\]
and consequently,
\begin{equation}
\begin{aligned} & \mathrm{Var}\Big[\frac{1}{N}\sum_{k=1}^{N}f(\tilde{X}^{(2)}(k))\Big]\\
 & =\mathbb{E}\Big[\Big(\frac{1}{N}\sum_{k=1}^{N}f(\tilde{X}^{(2)}(k))\Big)^{2}\Big]+\text{O}_{2}(N^{-2})\\
 & =\frac{1}{N}\text{Var}[f(X)]+\frac{1}{4}\mathbb{E}\Big[\Big(\sum_{i,j}\mathbb{E}[\partial_{i}f(X)X_{j}](I-\bar{\Sigma})_{ij}\Big)^{2}\Big]+\mathbb{E}\Big[\Big(\sum_{i}\mathbb{E}[\partial_{i}f(X)]\bar{X}_{i}\Big)^{2}\Big]\\
 & \quad+\sum_{i,j}\mathbb{E}[\partial_{i}f(X)X_{j}]\mathbb{E}[f(X(1))(I-\bar{\Sigma})_{ij}]-2\sum_{i}\mathbb{E}[\partial_{i}f(X)]\mathbb{E}[f(X(1))\bar{X}_{i}]\\
 & \quad-\mathbb{E}\Big[\Big(\sum_{i,j}\mathbb{E}[\partial_{i}f(X)X_{j}](I-\bar{\Sigma})_{ij}\Big)\Big(\sum_{i}\mathbb{E}[\partial_{i}f(X)]\bar{X}_{i}\Big)\Big]+\text{O}(N^{-3/2})\\
 & =\frac{1}{N}\text{Var}[f(X)]+\frac{1}{4}E_{1}+E_{2}+E_{3}-2E_{4}-E_{5}+\text{O}(N^{-3/2}).
\end{aligned}
\label{eq:-26}
\end{equation}

We compute the expectations $E_{k},1\le k\le5$ one by one. For $E_{1}$,
by (\ref{eq:-76}),
\[
\begin{aligned}\mathbb{E}[(I-\bar{\Sigma})_{ij}(I-\bar{\Sigma})_{pq}] & =\frac{1}{N}[\mathbb{E}(X_{i}X_{j}X_{p}X_{q})-\delta_{ij}\delta_{pq}]+\text{O}(N^{-3/2}).\end{aligned}
\]
Therefore,
\begin{equation}
\begin{aligned}E_{1} & =\frac{1}{N}\sum_{i,j,p,q}\mathbb{E}[\partial_{i}f(X)X_{j}]\mathbb{E}[\partial_{p}f(X)X_{q}][\mathbb{E}(X_{i}X_{j}X_{p}X_{q})-\delta_{ij}\delta_{pq}]+\text{O}(N^{-3/2}).\end{aligned}
\label{eq:-43}
\end{equation}
For $E_{2}$ and $E_{4}$, we have
\begin{equation}
\begin{aligned}E_{2} & =\sum_{i,j}\mathbb{E}[\partial_{i}f(X)]\mathbb{E}[\partial_{j}f(X)]\mathbb{E}(\bar{X}_{i}\bar{X}_{j})=\frac{1}{N}\sum_{i}\mathbb{E}[\partial_{i}f(X)]^{2},\end{aligned}
\label{eq:-44}
\end{equation}
and
\begin{equation}
\begin{aligned}E_{4} & =\frac{1}{N}\sum_{i}\mathbb{E}[\partial_{i}f(X)]\mathbb{E}[f(X)X_{i}]\end{aligned}
.\label{eq:-46}
\end{equation}
For $E_{3}$, by (\ref{eq:-74}) and $\mathbb{E}[f(X)]=0$, 
\begin{equation}
\begin{aligned}E_{3} & =\frac{1}{N}\sum_{i,j}\mathbb{E}[\partial_{i}f(X)X_{j}]\mathbb{E}[f(X)(\delta_{ij}-X_{i}X_{j})]+\text{O}(N^{-2}),\end{aligned}
\label{eq:-45}
\end{equation}
For $E_{5}$, by (\ref{eq:-75}),
\begin{equation}
\begin{aligned}E_{5} & =\sum_{i,j,p}\mathbb{E}[\partial_{i}f(X)X_{j}]\mathbb{E}[\partial_{p}f(X)]\mathbb{E}[(I-\bar{\Sigma})_{ij}\bar{X}_{p}]\\
 & =-\frac{1}{N}\sum_{i,j,p}\mathbb{E}[\partial_{i}f(X)X_{j}]\mathbb{E}[\partial_{p}f(X)]\mathbb{E}(X_{i}X_{j}X_{p})+\text{O}(N^{-3/2}).
\end{aligned}
\label{eq:-48}
\end{equation}
Combining (\ref{eq:-26}), (\ref{eq:-43}), (\ref{eq:-44}), (\ref{eq:-46}),
(\ref{eq:-45}), (\ref{eq:-48}) completes the proof.
\end{proof}
\begin{example}
\label{exa:-1}As an application of Lemma \ref{lem:-1}, we show that
uniform distributions do not satisfy the second order asymptotic universal
moment matching property. Suppose for simplicity that $n=1$ and $p(x)=\frac{1}{2\sqrt{3}}\chi_{B}(x)$.
Then $\mathbb{E}(X)=0$, $\mathbb{E}(X^{2})=1$. For any smooth function
$f$ with $\text{supp}(f)\subseteq B$, we have $\mathbb{E}[f^{\prime}(X)]=0$
and $\mathbb{E}[f^{\prime}(X)X]=-\mathbb{E}[f(X)]$ by integrating
by parts. By Lemma \ref{lem:-1} and some simple calculation, 
\begin{equation}
\begin{aligned}\lim_{N\to\infty}N\mathrm{Var}\Big[\frac{1}{N}\sum_{k=1}^{N}f(\tilde{X}^{(2)}(k))\Big] & =\mathrm{Var}[f(X)]+\mathbb{E}[f(X)X^{2}]-\frac{4}{5}\mathbb{E}[f(X)]^{2}.\end{aligned}
\label{eq:-62}
\end{equation}
Let $\{f_{j}\}_{j}$ be a sequence of smooth functions such that $|f_{j}|\le1$
on $B$, $f_{j}(x)=1$ on $B_{j}$ and $\text{supp}(f_{j})\subseteq B_{j+1}$,
where $B_{j}=(1-2^{-j})B=[-\sqrt{3}(1-2^{-j}),\sqrt{3}(1-2^{-j})]$.
By (\ref{eq:-62}) and the dominated convergence theorem,
\[
\lim_{j\to\infty}\lim_{N\to\infty}\Big[N\mathrm{Var}\Big[\frac{1}{N}\sum_{k=1}^{N}f_{j}(\tilde{X}^{(2)}(k))\Big]-\mathrm{Var}[f_{j}(X)]\Big]=\mathbb{E}(X^{2})-\frac{4}{5}=\frac{1}{5}>0.
\]
This implies that uniform distributions do not satisfy the second
order asymptotic universal moment matching property.
\end{example}
We now proceed to the proof of Theorem \ref{thm:-1}.
\begin{proof}[Proof of Theorem \ref{thm:-1}]
(i) Suppose first that $\mathbb{E}(X)=0$ and $\text{Var}(X)=I$.
Clearly, $\mathbb{E}(X_{i}X_{j}X_{p}X_{q})=0$ if $i<j,p<q$ and $(i,j)\not=(p,q)$.
Therefore,
\[
\begin{aligned} & \sum_{i,j,p,q}\mathbb{E}[\partial_{i}f(X)X_{j}]\mathbb{E}[\partial_{p}f(X)X_{q}][\mathbb{E}(X_{i}X_{j}X_{p}X_{q})-\delta_{ij}\delta_{pq}]\\
 & =4\sum_{i<j}\sum_{p<q}\mathbb{E}[\partial_{i}f(X)X_{j}]\mathbb{E}[\partial_{p}f(X)X_{q}]\mathbb{E}(X_{i}X_{j}X_{p}X_{q})\\
 & \quad+4\sum_{i<j}\sum_{p}\mathbb{E}[\partial_{i}f(X)X_{j}]\mathbb{E}[\partial_{p}f(X)X_{p}]\mathbb{E}(X_{i}X_{j}X_{p}^{2})\\
 & \quad+\sum_{i,p}\mathbb{E}[\partial_{i}f(X)X_{i}]\mathbb{E}[\partial_{p}f(X)X_{p}][\mathbb{E}(X_{i}^{2}X_{p}^{2})-1]\\
 & =4\sum_{i<j}\mathbb{E}[\partial_{i}f(X)X_{j}]^{2}\mathbb{E}(X_{i}^{2}X_{j}^{2})+\sum_{i}\mathbb{E}[\partial_{i}f(X)X_{i}]^{2}[\mathbb{E}(X_{i}^{4})-1]\\
 & =2\sum_{i,j}\mathbb{E}[\partial_{i}f(X)X_{j}]^{2},
\end{aligned}
\]
where for the second equality, we used 
\[
\mathbb{E}(X_{i}X_{j}X_{p}X_{q})=0,\quad i<j,\;p<q,\;(i,j)\not=(p,q),
\]
and $\mathbb{E}(X_{i}X_{j}X_{p}^{2})=0$ if $i\not=j$. Hence, by
Lemma \ref{lem:-1} and $\mathbb{E}(X_{i}X_{j}X_{p})=0$,
\begin{equation}
\begin{aligned} & \mathrm{Var}\Big[\frac{1}{N}\sum_{k=1}^{N}f(\tilde{X}^{(2)}(k))\Big]\\
 & =\frac{1}{N}\mathrm{Var}[f(X)]+\frac{1}{2N}\sum_{i,j}\mathbb{E}[\partial_{i}f(X)X_{j}]^{2}+\frac{1}{N}\sum_{i}\mathbb{E}[\partial_{i}f(X)]^{2}\\
 & \quad+\frac{1}{N}\sum_{i,j}\mathbb{E}[\partial_{i}f(X)X_{j}]\mathbb{E}[f(X)(\delta_{ij}-X_{i}X_{j})]\\
 & \quad-\frac{2}{N}\sum_{i}\mathbb{E}[\partial_{i}f(X)]\mathbb{E}[f(X)X_{i}]+\mathrm{O}(N^{-3/2}).
\end{aligned}
\label{eq:-53}
\end{equation}
Let $p(x)=(2\pi)^{-n/2}e^{-|x|^{2}/2}$ by the density function of
$X$. Integration by parts gives 
\begin{equation}
\mathbb{E}[\partial_{ij}^{2}f(X)]=\int\partial_{ij}^{2}f(x)p(x)dx=\int\partial_{i}f(x)x_{j}p(x)dx=\mathbb{E}[\partial_{i}f(X)X_{j}].\label{eq:-51}
\end{equation}
Integrating by parts again,
\begin{equation}
\begin{aligned}\mathbb{E}[\partial_{i}f(X)X_{j}] & =\int\partial_{i}f(x)x_{j}p(x)dx\\
 & =\int f(x)(x_{i}x_{j}-\delta_{ij})p(x)dx=\mathbb{E}[f(X)(X_{i}X_{j}-\delta_{ij})].
\end{aligned}
\label{eq:-52}
\end{equation}
It follows from (\ref{eq:-53}), (\ref{eq:-51}), (\ref{eq:-52})
that
\[
\begin{aligned}\mathrm{Var}\Big[\frac{1}{N}\sum_{k=1}^{N}f(\tilde{X}^{(2)}(k))\Big]=\frac{A_{N}}{N}\Big(\mathrm{Var}[f(X)]-\sum_{i}\mathbb{E}[\partial_{i}f(X)]^{2}-\frac{1}{2}\sum_{i,j}\mathbb{E}[\partial_{ij}^{2}f(X)]^{2}\Big),\end{aligned}
\]
with $A_{N}=1+\text{O}(N^{-1/2})$, or in matrix notation,
\begin{equation}
\mathrm{Var}\Big[\frac{1}{N}\sum_{k=1}^{N}f(\tilde{X}^{(2)}(k))\Big]=\frac{A_{N}}{N}\Big(\mathrm{Var}[f(X)]-\mathbb{E}[\partial f(X)]\mathbb{E}[\partial f(X)]^{\text{T}}-\frac{1}{2}\text{tr}\big(\mathbb{E}[\partial^{2}f(X)]^{2}\big)\Big).\label{eq:-54}
\end{equation}

For the general case of correlated normal distributions, (\ref{eq:-50})
follows from applying (\ref{eq:-54}) to $Y=\Sigma^{-1/2}\,(X-\mu)$
and $g(Y)=f(\Sigma^{1/2}Y+\mu)$ and the identity
\[
\text{tr}\big[\big(\Sigma^{1/2}\mathbb{E}[\partial^{2}f(X)]\Sigma^{1/2}\big)^{2}\big]=\text{tr}\big[\big(\Sigma\,\mathbb{E}[\partial^{2}f(X)]\big)^{2}\big].
\]

(ii) Suppose that $X$ satisfies the second order asymptotic universal
moment matching property, we show that $X$ is a normal distribution.
We give the proof of this conclusion for $n=1$. The passage from
$n=1$ to $n>1$ is similar to the proof of Theorem \ref{thm:}, (ii)
with the stronger assumption that (\ref{eq:-35}) holds for all bounded
smooth functions. A proof for $n>1$ without this stronger assumption
is similar to the proof of Proposition \ref{prop:}. 

Suppose now $n=1$, and without loss of generality, $\mathbb{E}(X)=0$
and $\mathbb{E}(X^{2})=1$. By Lemma \ref{lem:-1},
\begin{equation}
\begin{aligned} & \mathrm{Var}\Big[\frac{1}{N}\sum_{k=1}^{N}f(\tilde{X}^{(2)}(k))\Big]\\
 & =\frac{1}{N}\mathrm{Var}[f(X)]+\frac{1}{4N}\mathbb{E}[f^{\prime}(X)X]^{2}[\mathbb{E}(X^{4})-1]+\frac{1}{N}\mathbb{E}[f^{\prime}(X)]^{2}\\
 & \quad+\frac{1}{N}\mathbb{E}[f^{\prime}(X)X]\mathbb{E}[f(X)(1-X^{2})]-\frac{2}{N}\mathbb{E}[f^{\prime}(X)]\mathbb{E}[f(X)X]\\
 & \quad+\frac{1}{N}\mathbb{E}[f^{\prime}(X)X]\mathbb{E}[f^{\prime}(X)]\mathbb{E}(X^{3})+\text{O}(N^{-3/2}).
\end{aligned}
\label{eq:-57}
\end{equation}
The key observation is that
\begin{equation}
\frac{1}{4}\mathbb{E}[f^{\prime}(X)X]^{2}[\mathbb{E}(X^{4})-1]+\mathbb{E}[f^{\prime}(X)X]\mathbb{E}[f^{\prime}(X)]\mathbb{E}(X^{3})+\mathbb{E}[f^{\prime}(X)]^{2}\ge0.\label{eq:-55}
\end{equation}
To see this, note that, by $\mathbb{E}(X)=0$, $\mathbb{E}(X^{2})=1$
and H\"{o}lder's inequality,
\begin{equation}
\mathbb{E}(X^{3})^{2}=\mathbb{E}[X(X^{2}-1)]^{2}\le\mathbb{E}(X^{2})\mathbb{E}[(X^{2}-1)^{2}]=\mathbb{E}(X^{4})-1.\label{eq:-56}
\end{equation}
Therefore, (\ref{eq:-55}) follows from (\ref{eq:-56}) and 
\[
\begin{aligned} & \frac{1}{4}\mathbb{E}[f^{\prime}(X)X]^{2}[\mathbb{E}(X^{4})-1]+\mathbb{E}[f^{\prime}(X)X]\mathbb{E}[f^{\prime}(X)]\mathbb{E}(X^{3})+\mathbb{E}[f^{\prime}(X)]^{2}\\
 & =\frac{1}{4}\mathbb{E}[f^{\prime}(X)X]^{2}[\mathbb{E}(X^{4})-1-\mathbb{E}(X^{3})^{2}]+\Big(\frac{1}{2}\mathbb{E}[f^{\prime}(X)X]\mathbb{E}(X^{3})+\mathbb{E}[f^{\prime}(X)]\Big)^{2}.
\end{aligned}
\]
By (\ref{eq:-57}) and (\ref{eq:-55}), we see that the second order
asymptotic universal moment matching property of $X$ implies
\begin{equation}
\mathbb{E}[f^{\prime}(X)X]\mathbb{E}[f(X)(1-X^{2})]-2\mathbb{E}[f^{\prime}(X)]\mathbb{E}[f(X)X]\le0,\label{eq:-58}
\end{equation}
for any smooth function $f$ with compact support $\text{supp}(f)\subseteq\text{supp}(p)^{o}$.
Integrating by parts gives
\begin{equation}
\mathbb{E}[f^{\prime}(X)]=-\int_{-\infty}^{\infty}f(x)p^{\prime}(x)dx,\label{eq:-63}
\end{equation}
and
\begin{equation}
\mathbb{E}[f^{\prime}(X)X]=-\int_{-\infty}^{\infty}f(x)[p(x)+xp^{\prime}(x)]dx.\label{eq:-64}
\end{equation}
Therefore, (\ref{eq:-58}) can be written as
\[
\int_{-\infty}^{\infty}f(x)[p(x)+xp^{\prime}(x)]dx\cdot\int_{-\infty}^{\infty}f(x)(1-x^{2})p(x)dx-2\int_{-\infty}^{\infty}f(x)p^{\prime}(x)dx\cdot\int_{-\infty}^{\infty}f(x)xp(x)dx\ge0,
\]
Let $\phi(x)$ be a smooth function supported in $(-1,1)$ and $\int\phi(x)dx=1$.
Denote $\phi_{\epsilon}(x)=\epsilon^{-1}\phi(x/\epsilon)$ for $\epsilon>0$.
Since Let $u\not=0$ be a point in $\text{supp}(p)$ such that $|u|\not=1$.
For any $x_{1},x_{2}\in\text{supp}(p)$ and any $a_{1},a_{2},b\in\mathbb{R}$,
setting $f(x)=\sum_{i=1}^{2}a_{i}\phi_{\epsilon}(x_{i}-x)+b\phi_{\epsilon}(u-x)$
in (\ref{eq:-58}) and letting $\epsilon\to0$ gives
\begin{equation}
\begin{aligned} & 2\Big(\sum_{i=1}^{2}a_{i}p^{\prime}(x_{i})+bp^{\prime}(u)\Big)\Big(\sum_{i=1}^{2}a_{i}x_{i}p(x_{i})+bup(u)\Big)\\
 & \le\Big(\sum_{i=1}^{2}a_{i}[p(x_{i})+x_{i}p^{\prime}(x_{i})]+b[p(u)+up^{\prime}(u)]\Big)\Big(\sum_{i=1}^{2}a_{i}(1-x_{i}^{2})p(x_{i})+b(1-u^{2})p(u)\Big).
\end{aligned}
\label{eq:-59}
\end{equation}
Setting 
\[
b=-\sum_{i=1}^{2}a_{i}\dfrac{(1-x_{i}^{2})p(x_{i})}{(1-u^{2})p(u)},
\]
in (\ref{eq:-59}) gives that
\[
\Big(\sum_{i=1}^{2}a_{i}\Big(p^{\prime}(x_{i})-\dfrac{p^{\prime}(u)}{(1-u^{2})p(u)}(1-x_{i}^{2})p(x_{i})\Big)\Big)\Big(\sum_{i=1}^{2}a_{i}\Big(x_{i}-\dfrac{u}{(1-u^{2})}(1-x_{i}^{2})\Big)p(x_{i})\Big)\le0
\]
\begin{equation}
\Big(\sum_{i=1}^{2}a_{i}[p^{\prime}(x_{i})-\xi(1-x_{i}^{2})p(x_{i})]\Big)\Big(\sum_{i=1}^{2}a_{i}[x_{i}-\eta(1-x_{i}^{2})]p(x_{i})\Big)\le0,\label{eq:-60}
\end{equation}
for any $a_{1},a_{2}\in\mathbb{R}$, where $\xi=p^{\prime}(u)/[(1-u^{2})p(u)]$
and $\eta=u/(1-u^{2})$ are fixed constants. It follows from (\ref{eq:-60})
and Lemma \ref{lem:-2} that
\[
\dfrac{p^{\prime}(x_{1})-\xi(1-x_{1}^{2})p(x_{1})}{[x_{1}-\eta(1-x_{1}^{2})]p(x_{1})}=\dfrac{p^{\prime}(x_{2})-\xi(1-x_{2}^{2})p(x_{2})}{[x_{2}-\eta(1-x_{2}^{2})]p(x_{2})},
\]
which implies that
\[
\dfrac{p^{\prime}(x)-\xi(1-x^{2})p(x)}{[x-\eta(1-x^{2})]p(x)}=-c_{1},\quad x\in\text{supp}(p),
\]
for some constant $c_{1}$. Solving this differential equation gives
$p(x)=c_{2}e^{-c_{1}x^{2}/2-c_{1}(\xi-\eta)(x-x^{3}/3)}$ for some
$c_{2}$. Note that $\partial p\not=0$ by revoking the conclusion
of Example \ref{exa:-1}. Therefore, $c_{1}\not=0$. Moreover, it
follows from $\mathbb{E}(X^{2})<\infty$ that $\xi-\eta=0$. This
implies that $X$ is a normal distribution.
\end{proof}
Let us turn to the proof of Proposition \ref{prop:-2}. The proof
is similar to that of Proposition \ref{prop:-1}, with some modification
to account for the non-linearity of the second order moment matching
transformation. More specifically, the inverse of $T^{-1}$ in the
proof of Proposition \ref{prop:-1} will be replaced by an approximate
inverse operator $S$.
\begin{proof}[Proof of Proposition \ref{prop:-2}]
We may assume that $\mathbb{E}[f(X)]=0$. Suppose for convenience
of notation that $n=1$, $\mathbb{E}(X)=0$ and $\mathbb{E}(X^{2})=1$.
We compute each term of the following expansion
\begin{equation}
\begin{aligned}\mathbb{E}\Big[\Big(\frac{1}{N}\sum_{k=1}^{N}f(\tilde{X}^{(2)}(k))\Big)^{2}\Big] & =\frac{1}{N}\mathbb{E}[f(\tilde{X}^{(2)}(1))^{2}]+\Big(1-\frac{1}{N}\Big)\mathbb{E}[f(\tilde{X}^{(2)}(1))f(\tilde{X}^{(2)}(2))]\\
 & =\frac{1}{N}E+\Big(1-\frac{1}{N}\Big)E^{\prime}.
\end{aligned}
\label{eq:-84}
\end{equation}
For any $x=(x_{1},\dots,x_{N})\in\mathbb{R}^{N}$, denote 
\[
\bar{x}=N^{-1}\sum_{k=1}^{N}x_{k},\quad\bar{\sigma}(x)^{2}=\frac{1}{N}\sum_{k=1}^{N}x_{k}^{2}-\bar{x}^{2}.
\]
Define $T:\mathbb{R}^{N}\to\mathbb{R}^{N}$ by 
\begin{equation}
Tx=\big(\bar{\sigma}(x)^{-1}(x_{1}-\bar{x}),\bar{\sigma}(x)^{-1}(x_{2}-\bar{x}),x_{3},\dots,x_{N}\big).\label{eq:-17}
\end{equation}
By direction computation,
\begin{equation}
\det[\partial T(x)]=\bar{\sigma}(x)^{-2}\big(1-N^{-1}(2+(Tx)_{1}^{2}+(Tx)_{2}^{2})+N^{-2}[(Tx)_{1}-(Tx)_{2}]^{2}\big).\label{eq:-67}
\end{equation}
For sufficiently large $N$, we may assume $\det[\partial T(x)]>0$
therefore $T^{-1}$ is well defined.\footnote{This can be made rigorous by truncating the integrals to the region
$\{x:|(Tx)_{1}|<L,|(Tx)_{2}|<L\}$ for sufficiently large $L=L(N)$,
and by the rapidly decreasing property of the density function $p(x)$.} Let $Y$ be the random vector defined by $Y=TX$. Then $\tilde{X}(k)=Y(k)$
for $k=1,2$. By (\ref{eq:-17}) and simple algebra,
\[
\bar{X}=\bar{Y}+N^{-1}[\bar{\sigma}(X)-1][Y(1)+Y(2)]+2N^{-1}\bar{X}=\bar{Y}+\text{O}(N^{-3/2}),
\]
and
\begin{equation}
\begin{aligned}\bar{\sigma}(X)^{2} & =\frac{1}{N}\sum_{k=1}^{N}Y_{k}^{2}-\bar{X}^{2}+N^{-1}(\bar{\sigma}(X)^{2}-1)[Y(1)^{2}+Y(2)^{2}]+2N^{-1}\bar{X}[X(1)+X(2)-\bar{X}]\\
 & =\frac{1}{N}\sum_{k=1}^{N}Y_{k}^{2}-\bar{X}^{2}+\text{O}(N^{-3/2})=\frac{1}{N}\sum_{k=1}^{N}Y_{k}^{2}-\bar{Y}^{2}+\text{O}(N^{-3/2})=\bar{\sigma}(Y)^{2}+\text{O}(N^{-3/2}).
\end{aligned}
\label{eq:-66}
\end{equation}
By (\ref{eq:-17}) again, for $k=1,2$,
\[
\begin{aligned}X(k) & =\bar{\sigma}(X)Y(k)+\frac{N}{N-2}\bar{Y}+\frac{1}{N-2}[\bar{\sigma}(X)-1][Y(1)+Y(2)]\\
 & =\bar{\sigma}(X)Y(k)+\bar{Y}+\text{O}(N^{-3/2})=\bar{\sigma}(Y)Y(k)+\bar{Y}+\text{O}(N^{-3/2}).
\end{aligned}
\]
Define
\begin{equation}
Sy=\big(\bar{\sigma}(y)y_{1}+\bar{y},\bar{\sigma}(y)y_{2}+\bar{y},y_{3},\dots,y_{N}\big),\quad y\in\mathbb{R}^{N}.\label{eq:-69}
\end{equation}
Since $X(k)=Y(k)$, $k>2$, we see that
\begin{equation}
X=SY+\text{O}(N^{-3/2}).\label{eq:-68}
\end{equation}
Let $p(x)=(2\pi)^{-N/2}e^{-|x|^{2}/2}$ be the density function of
the $N$-dimensional standard normal distribution. By (\ref{eq:-66}),
(\ref{eq:-68}), and the smoothness of $p(x)$,
\[
\begin{aligned}E^{\prime} & =\mathbb{E}[f(Y(1))f(Y(2))]\\
 & =\mathbb{E}\Big[f(Y(1))f(Y(2))\frac{\bar{\sigma}(Y)^{2}}{\bar{\sigma}(X)^{2}}\frac{p(SY)}{p(X)}\Big]+\text{O}(N^{-3/2})\\
 & =\int_{\mathbb{R}^{N}}f[(Tx)_{1}]f[(Tx)_{2}]\frac{\bar{\sigma}(Tx)^{2}}{\bar{\sigma}(x)^{2}}p(STx)dx+\text{O}(N^{-3/2}).
\end{aligned}
\]
Moreover, by (\ref{eq:-67}) and change of variable,
\begin{equation}
\begin{aligned}E^{\prime} & =\int_{\mathbb{R}^{N}}f(y_{1})f(y_{2})\frac{\bar{\sigma}(y)^{2}}{\bar{\sigma}(T^{-1}y)^{2}}p(Sy)\det[\partial T^{-1}(y)]dy+\text{O}(N^{-3/2})\\
 & =\int_{\mathbb{R}^{N}}f(y_{1})f(y_{2})p(Sy)\bar{\sigma}(y)^{2}[1-N^{-1}(2+y_{1}^{2}+y_{2}^{2})]^{-1}dy+\text{O}(N^{-3/2})\\
 & =\int_{\mathbb{R}^{N}}f(y_{1})f(y_{2})\bar{\sigma}(y)^{2}[1+2N^{-1}(1+y_{1}^{2})]p(Sy)dy+\text{O}(N^{-3/2})\\
 & =\int_{\mathbb{R}^{N}}f(x_{1})f(x_{2})\bar{\sigma}(x)^{2}[1+2N^{-1}(1+x_{1}^{2})]\frac{p(Sx)}{p(x)}p(x)dx+\text{O}(N^{-3/2})\\
 & =\mathbb{E}\Big[f(X(1))f(X(2))\bar{\sigma}(X)^{2}[1+2N^{-1}(1+X(1)^{2})]\frac{p(SX)}{p(X)}\Big]+\text{O}(N^{-3/2}).
\end{aligned}
\label{eq:-70}
\end{equation}
Similarly,
\begin{equation}
\begin{aligned}\frac{1}{N}E & =\frac{1}{N}\mathbb{E}\Big[f(X(1))^{2}\bar{\sigma}(X)^{2}\frac{p(SX)}{p(X)}\Big]+\text{O}(N^{-2})\\
 & =\frac{1}{N}\mathbb{E}\Big[f(X(1))^{2}\frac{p(SX)}{p(X)}\Big]+\text{O}(N^{-3/2}).
\end{aligned}
\label{eq:-81}
\end{equation}
By (\ref{eq:-69}), $\partial p(x)=-xp(x)$, $\partial^{2}p(x)=(xx^{\text{T}}-I)p(x)$,
and Taylor's formula,
\begin{equation}
\begin{aligned}\frac{p(SX)}{p(X)} & =1-\sum_{k=1}^{2}[(\bar{\sigma}-1)X(k)+\bar{X}]X(k)+\frac{1}{2}\sum_{k=1}^{2}[X(k)^{2}-1][(\bar{\sigma}-1)X(k)+\bar{X}]^{2}\\
 & \quad+X(1)X(2)[(\bar{\sigma}-1)X(1)+\bar{X}][(\bar{\sigma}-1)X(2)+\bar{X}]+\text{O}_{2}(N^{-3/2}),
\end{aligned}
\label{eq:-71}
\end{equation}
where, and for the rest of the proof, we denote $\bar{\sigma}=\bar{\sigma}(X)$
for simplicity. Note that $p(SX)/p(X)=1+\text{O}(N^{-1/2})$ as a
consequence of (\ref{eq:-71}). It follows from (\ref{eq:-81}) that
\begin{equation}
\frac{1}{N}E=\frac{1}{N}\mathbb{E}[f(X(1))^{2}]+\text{O}(N^{-3/2})=\frac{1}{N}\text{Var}[f(X)].\label{eq:-82}
\end{equation}
By (\ref{eq:-70}), (\ref{eq:-71}), $\bar{\sigma}=1+\text{O}(N^{-1/2})$
and $p(SX)/p(X)=1+\text{O}(N^{-1/2})$,
\begin{equation}
\begin{aligned}E^{\prime} & =\mathbb{E}\Big[f(X(1))f(X(2))\bar{\sigma}^{2}\frac{p(SX)}{p(X)}\Big]\\
 & \quad+2N^{-1}\mathbb{E}[f(X(1))f(X(2))(1+X(1)^{2})]+\text{O}(N^{-3/2})\\
 & =\mathbb{E}\Big[f(X(1))f(X(2))\bar{\sigma}^{2}\frac{p(SX)}{p(X)}\Big]+\text{O}(N^{-3/2})\\
 & =\mathbb{E}\Big[f(X(1))f(X(2))\frac{p(SX)}{p(X)}\Big]\\
 & \quad+\mathbb{E}\Big[f(X(1))f(X(2))(\bar{\sigma}^{2}-1)\frac{p(SX)}{p(X)}\Big]+\text{O}(N^{-3/2})\\
 & =E_{1}^{\prime}+E_{2}^{\prime}+\text{O}(N^{-3/2}).
\end{aligned}
\label{eq:-72}
\end{equation}
For $E_{1}^{\prime}$, by $\mathbb{E}[f(X)]=0$, (\ref{eq:-71}) and
\[
\begin{aligned}(\bar{\sigma}-1)^{2} & =(\bar{\sigma}+1)^{-2}(\bar{\sigma}^{2}-1)^{2}=\frac{1}{4}(\bar{\sigma}^{2}-1)^{2}+\text{O}(N^{-3/2}),\\
(\bar{\sigma}-1)\bar{X} & =(\bar{\sigma}+1)^{-1}(\bar{\sigma}^{2}-1)\bar{X}=\frac{1}{2}(\bar{\sigma}^{2}-1)\bar{X}+\text{O}(N^{-3/2}),
\end{aligned}
\]
we deduce that
\[
\begin{aligned}E_{1}^{\prime} & =-2\mathbb{E}[X(1)^{2}f(X(1))f(X(2))(\bar{\sigma}-1)]-2\mathbb{E}[X(1)f(X(1))f(X(2))\bar{X}]\\
 & \quad+\frac{1}{4}\mathbb{E}[(X(1)^{2}-1)X(1)^{2}f(X(1))f(X(2))(\bar{\sigma}^{2}-1)^{2}]\\
 & \quad+\mathbb{E}[(X(1)^{2}-1)f(X(1))f(X(2))\bar{X}^{2}]\\
 & \quad+\mathbb{E}[(X(1)^{2}-1)X(1)f(X(1))f(X(2))(\bar{\sigma}^{2}-1)\bar{X}]\\
 & \quad+\frac{1}{4}\mathbb{E}[X(1)^{2}f(X(1))X(2)f(X(2))(\bar{\sigma}^{2}-1)^{2}]\\
 & \quad+\mathbb{E}[X(1)f(X(1))X(2)f(X(2))\bar{X}^{2}]\\
 & \quad+\mathbb{E}[X(1)^{2}f(X(1))X(2)f(X(2))(\bar{\sigma}^{2}-1)\bar{X}]+\text{O}(N^{-3/2}).
\end{aligned}
\]
It follows from the above and Lemma \ref{lem:-4} that
\[
\begin{aligned}E_{1}^{\prime} & =-\frac{1}{N}\mathbb{E}[Xf(X)]^{2}-\frac{1}{2N}\mathbb{E}[(X^{2}-1)f(X)]^{2}+\text{O}(N^{-3/2}).\end{aligned}
\]
For $E_{2}^{\prime}$, by (\ref{eq:-71}) again,
\[
\begin{aligned}E_{2}^{\prime} & =-2\mathbb{E}[f(X(1))f(X(2))(\bar{\sigma}^{2}-1)(\bar{\sigma}-1)X(1)^{2}]\\
 & \quad-2\mathbb{E}[X(1)f(X(1))f(X(2))(\bar{\sigma}^{2}-1)\bar{X}]+\text{O}(N^{-3/2})\\
 & =-\mathbb{E}[X(1)^{2}f(X(1))f(X(2))(\bar{\sigma}^{2}-1)^{2}]\\
 & \quad-2\mathbb{E}[X(1)f(X(1))f(X(2))(\bar{\sigma}^{2}-1)\bar{X}]+\text{O}(N^{-3/2}).
\end{aligned}
\]
By Lemma \ref{lem:-4} and $\mathbb{E}[f(X)]=0$, we deduce that $\begin{aligned}E_{2}^{\prime} & =\text{O}(N^{-3/2})\end{aligned}
$. Therefore, 
\begin{equation}
\begin{aligned}E_{1} & =-\frac{1}{N}\mathbb{E}[Xf(X)]^{2}-\frac{1}{2N}\mathbb{E}[(X^{2}-1)f(X)]^{2}+\text{O}(N^{-3/2}).\end{aligned}
\label{eq:-83}
\end{equation}
By (\ref{eq:-84}), (\ref{eq:-81}), (\ref{eq:-83}),
\begin{equation}
\mathbb{E}\Big[\Big(\frac{1}{N}\sum_{k=1}^{N}f(\tilde{X}^{(2)}(k))\Big)^{2}\Big]=\frac{A_{N}}{N}\Big(\text{Var}[f(X)]-\mathbb{E}[Xf(X)]^{2}-\frac{1}{2}\mathbb{E}[(X^{2}-1)f(X)]^{2}\Big).\label{eq:-87}
\end{equation}
Similar argument shows that 
\begin{equation}
\mathbb{E}\Big(\frac{1}{N}\sum_{k=1}^{N}f(\tilde{X}^{(2)}(k))\Big)=\text{O}(N^{-1}).\label{eq:-88}
\end{equation}
Combining (\ref{eq:-87}) and (\ref{eq:-88}) proves (\ref{eq:-65})
for $n=1$ and $\text{Var}(X)=1$. 

The proof for $n>1$ and $\text{Var}(X)=I$ is similar to the above.
For $n>1$ and correlated normal distributions, the conclusion follows
from change of variable $Y=\Sigma^{-1/2}X$ and the identity 
\[
\mathrm{tr}\big[\mathbb{E}\big((\Sigma^{-1/2}XX^{\mathrm{T}}\Sigma^{-1/2}-I)f(X)\big)\big)^{2}\big]=\mathrm{tr}\big[\mathbb{E}\big((\Sigma^{-1}XX^{\mathrm{T}}-I)f(X)\big)\big)^{2}\big].
\]
 
\end{proof}

\section{\label{sec:-2}Numerical results}

In this section we present some numerical experiments to support the
results in this paper. The C++ source code for numerical experiments
in this section is available in the GitHub project ``umm''\footnote{Available at \href{https://github.com/liuxuan1111/umm}{https://github.com/liuxuan1111/umm}}. 

Table \ref{tab:} shows the pricing results of a down-and-in put option---a
typical types of discontinuous payoff---on a single asset ($n=1$).
The market parameters used are: volatility $\sigma=0.3$, interest
rate $r=0.05$, stock spot $S=1.0$, strike $K=1.0$, maturity $T=1.0$,
and knock-in barrier $B=0.8$. In Table \ref{tab:}, ``IID'' denotes
the plain Monte Carlo method, while ``MM1'' and ``MM2'' refer
to the first and the second order moment matching Monte Carlo, respectively.
``SE(MM1)'' and ``SE(MM2)'' stand for the standard deviation estimated
using (\ref{eq:-27}) and (\ref{eq:-65}), whereas ``SE(MMS1)''
and ``SE(MMS2)'' are the standard deviation estimated from 500 independent
sets of moment matching Monte Carlo simulations, each with $N$ samples.
The results clearly show that the variance of the second order moment
matching Monte Carlo is consistently smaller than that of its first
order counterpart, which in turn is smaller than that of the plain
Monte Carlo. Moreover, the variance of moment matching estimates computed
by (\ref{eq:-27}) and (\ref{eq:-65}) align closely with estimations
derived from multiple independent simulations of $N$ samples. Figure
\ref{fig:} compares the standard errors in a $\log$--$\log$ plot,
where ``mm1\_seed'' and ``mm2\_seed'' correspond to ``SE(MMS1)''
and ``SE(MMS2)'' in Table \ref{tab:}.

Table \ref{tab:-1} reports the pricing results of down-and-in put
option on a worst-of basket consisting of $n=3$ assets. That is,
the basket performance is computed as the worst of the three stock
performances. The market parameters used are: volatilities $\sigma=(0.3,0.2,0.4)$,
interest rate $r=0.05$, stock spots $S=(1.0,1.0,1.0)$, strike $K=1.0$,
maturity $T=1.0$, knock-in barrier $B=0.8$, and the correlation
matrix
\[
\rho=\left[\begin{array}{ccc}
1 & 0.3 & 0.1\\
0.3 & 1 & 0.5\\
0.1 & 0.5 & 1
\end{array}\right].
\]
The definitions of columns in Table \ref{tab:-1} are identical to
those in Table \ref{tab:}. Figure \ref{fig:-1}, analogous to Figure
\ref{fig:}, presents the standard errors in a $\log$-$\log$ plot.

\begin{table}[H]

\centering{}%
\begin{tabular}{|c|c|c|c|c|c|c|c|c|}
\hline 
\textbf{\small{}$N$} & {\small{}PV(IID)} & {\small{}PV(MM1)} & {\small{}PV(MM2)} & {\small{}SE(IID)} & {\small{}SE(MM1)} & {\small{}SE(MMS1)} & {\small{}SE(MM2)} & {\small{}SE(MMS2)}\tabularnewline
\hline 
{\small{}10000} & {\small{}0.06932} & {\small{}0.06903} & {\small{}0.06830} & {\small{}0.00135} & {\small{}0.00087} & {\small{}0.00084} & {\small{}0.00051} & {\small{}0.00053}\tabularnewline
\hline 
{\small{}20000} & {\small{}0.06882} & {\small{}0.06895} & {\small{}0.06814} & {\small{}0.00095} & {\small{}0.00062} & {\small{}0.00061} & {\small{}0.00036} & {\small{}0.00038}\tabularnewline
\hline 
{\small{}40000} & {\small{}0.06807} & {\small{}0.06837} & {\small{}0.06801} & {\small{}0.00067} & {\small{}0.00044} & {\small{}0.00045} & {\small{}0.00026} & {\small{}0.00027}\tabularnewline
\hline 
{\small{}80000} & {\small{}0.06798} & {\small{}0.06828} & {\small{}0.06803} & {\small{}0.00047} & {\small{}0.00031} & {\small{}0.00032} & {\small{}0.00019} & {\small{}0.00019}\tabularnewline
\hline 
{\small{}160000} & {\small{}0.06787} & {\small{}0.06804} & {\small{}0.06795} & {\small{}0.00033} & {\small{}0.00022} & {\small{}0.00022} & {\small{}0.00014} & {\small{}0.00014}\tabularnewline
\hline 
{\small{}320000} & {\small{}0.06780} & {\small{}0.06807} & {\small{}0.06803} & {\small{}0.00024} & {\small{}0.00016} & {\small{}0.00016} & {\small{}9.5e-5} & {\small{}9.4e-05}\tabularnewline
\hline 
{\small{}640000} & {\small{}0.06805} & {\small{}0.06815} & {\small{}0.06806} & {\small{}0.00019} & {\small{}0.00012} & {\small{}0.00013} & {\small{}7.6e-05} & {\small{}7.5e-05}\tabularnewline
\hline 
\end{tabular}\caption{Pricing of down-in put ($n=1$)}
\label{tab:}
\end{table}

\begin{table}[H]
\centering{}%
\begin{tabular}{|c|c|c|c|c|c|c|c|c|}
\hline 
\textbf{\small{}$N$} & {\small{}PV(IID)} & {\small{}PV(MM1)} & {\small{}PV(MM2)} & {\small{}SE(IID)} & {\small{}SE(MM1)} & {\small{}SE(MMS1)} & {\small{}SE(MM2)} & {\small{}SE(MMS2)}\tabularnewline
\hline 
{\small{}10000} & {\small{}0.16226} & {\small{}0.16257} & {\small{}0.16139} & {\small{}0.00187} & {\small{}0.00111} & {\small{}0.00117} & {\small{}0.00073} & {\small{}0.00072}\tabularnewline
\hline 
{\small{}20000} & {\small{}0.16108} & {\small{}0.16275} & {\small{}0.16180} & {\small{}0.00131} & {\small{}0.00080} & {\small{}0.00080} & {\small{}0.00053} & {\small{}0.00053}\tabularnewline
\hline 
{\small{}40000} & {\small{}0.16078} & {\small{}0.16183} & {\small{}0.16179} & {\small{}0.00092} & {\small{}0.00057} & {\small{}0.00055} & {\small{}0.00038} & {\small{}0.00037}\tabularnewline
\hline 
{\small{}80000} & {\small{}0.16126} & {\small{}0.16194} & {\small{}0.16172} & {\small{}0.00065} & {\small{}0.00041} & {\small{}0.00039} & {\small{}0.00027} & {\small{}0.00027}\tabularnewline
\hline 
{\small{}160000} & {\small{}0.16174} & {\small{}0.16178} & {\small{}0.16169} & {\small{}0.00046} & {\small{}0.00029} & {\small{}0.00028} & {\small{}0.00019} & {\small{}0.00018}\tabularnewline
\hline 
{\small{}320000} & {\small{}0.16164} & {\small{}0.16180} & {\small{}0.16175} & {\small{}0.00033} & {\small{}0.00020} & {\small{}0.00020} & {\small{}0.00013} & {\small{}0.00013}\tabularnewline
\hline 
{\small{}640000} & {\small{}0.16162} & {\small{}0.16176} & {\small{}0.16168} & {\small{}0.00026} & {\small{}0.00016} & {\small{}0.00016} & {\small{}0.00011} & {\small{}0.00011}\tabularnewline
\hline 
\end{tabular}\caption{Pricing of down-in put ($n=3$)}
\label{tab:-1}
\end{table}

\begin{figure}[H]
\begin{centering}
\includegraphics[scale=0.7]{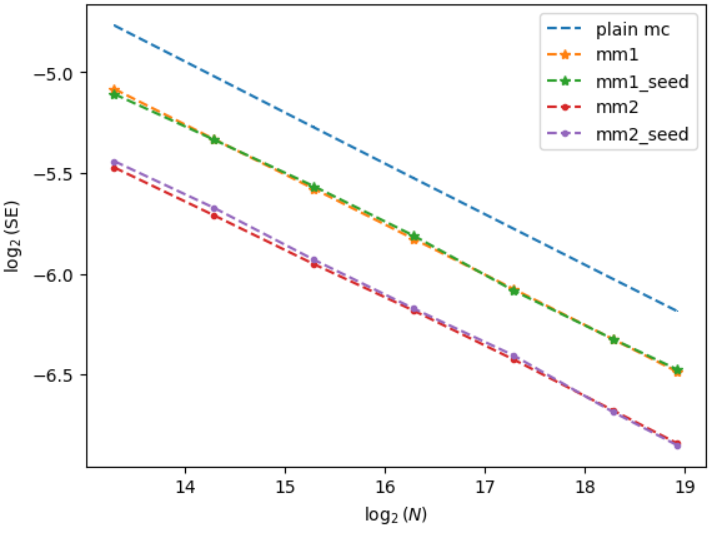}
\par\end{centering}
\caption{$\log$-$\log$ plot of standard error ($n=1$)}
\label{fig:}

\end{figure}

\begin{figure}[H]
\begin{centering}
\includegraphics[scale=0.7]{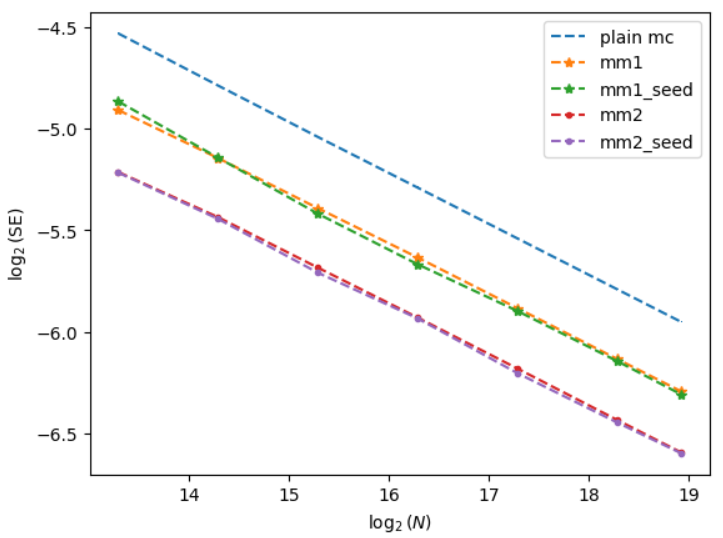}
\par\end{centering}
\caption{$\log$-$\log$ plot of standard error ($n=3$)}
\label{fig:-1}
\end{figure}

\section{\label{sec:-7}Conclusions}

We investigated the conditions under which moment matching Monte Carlo
achieves asymptotically smaller variance than the plain Monte Carlo
for general integrand functions. This asymptotic variance reduction
property is non-trivial: as demonstrated in Example \ref{exa:-2},
moment matching can, in some cases, yield even larger variance. We
resolve this problem by showing that this property holds if and only
if the underlying random distribution is a normal distribution (Theorem
\ref{thm:} and Theorem \ref{thm:-1}). Furthermore, when the underlying
distribution is a normal distribution, the unique distribution satisfying
the universal moment matching property, we derived variance formulae
(Proposition \ref{eq:-27} and Proposition \ref{prop:-2}) which allow
efficient simulation error estimation as by-products of the Monte
Carlo simulation process. As an application, we introduce a non-linear
moment matching scheme (Corollary \ref{cor:}) for general continuous
underlying random distributions. This scheme offers four advantages:
it is easy-to-implement; it guarantees asymptotic variance reduction;
it does not require knowledge on the integrand function; and it supports
efficient estimation of the simulation error. We should remark that
these benefits come at a cost: a modest increase in the computational
expense of random sample generation.

\appendix

\section{\label{sec:-4}Appendix 1}

We give a proof of the necessity half for multi-dimensional case of
Theorem \ref{thm:} without assuming (\ref{eq:-3}) holds for all
bounded smooth functions. It suffices to prove the following proposition.
\begin{prop}
\label{prop:}Let $X=(X_{1},\dots,X_{n})^{\mathrm{T}}$ be a continuous
random vector with $\mathbb{E}(|X|^{2})<\infty$ and differentiable
density $p(x)$. If
\begin{equation}
\sum_{i=1}^{n}\mathbb{E}[\partial_{i}f(X)]\mathbb{E}[X_{i}f(X)]\ge0,\label{eq:-36}
\end{equation}
for any smooth function $f$ with compact support in $\mathbb{R}^{n}$,
then $X$ is a normal distribution with zero mean.
\end{prop}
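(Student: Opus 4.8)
The plan is to transform hypothesis (\ref{eq:-36}), by integration by parts, into a sign condition on a quadratic functional of $f$, then to localize it by mollification into a pointwise inequality for $w:=\nabla\log p$, and finally to extract from that inequality that $w$ is affine, which pins $p$ down as a centered Gaussian. Concretely, since $\mathbb{E}[\partial_i f(X)]=\int\partial_i f\,p=-\int f\,\partial_i p$ and $\mathbb{E}[X_i f(X)]=\int x_i f\,p$, (\ref{eq:-36}) says $\big\langle\int f\,\nabla p,\ \int xf\,p\big\rangle\le 0$ for every smooth compactly supported $f$. Fix a mollifier $\phi$ supported in the unit ball with $\int\phi=1$, put $\phi_\varepsilon=\varepsilon^{-n}\phi(\cdot/\varepsilon)$, and for points $x^{(1)},\dots,x^{(m)}$ in the open set $D:=\{p>0\}$ and reals $a_1,\dots,a_m$ apply the inequality to $f=\sum_j a_j\phi_\varepsilon(x^{(j)}-\cdot)$. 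Letting $\varepsilon\to 0$ (using continuity of $p$ and $\nabla p$; if $p$ is merely differentiable one works at Lebesgue points of $\nabla p$, which suffices because the conclusion will exhibit $p$ as $C^\infty$) gives $\big\langle\sum_j a_j\nabla p(x^{(j)}),\ \sum_j a_j x^{(j)}p(x^{(j)})\big\rangle\le 0$. Substituting $b_j=a_j p(x^{(j)})$, which ranges over all of $\mathbb{R}^m$ since $p(x^{(j)})>0$, this reads: for every finitely supported signed measure $\nu=\sum_j b_j\delta_{x^{(j)}}$ on $D$,
\[
\big\langle W(\nu),\,M(\nu)\big\rangle\le 0,\qquad W(\nu):=\int w\,d\nu,\quad M(\nu):=\int x\,d\nu.
\]

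I then polarize. Because $(s,t)\mapsto\langle W(s\nu+t\rho),M(s\nu+t\rho)\rangle$ is a quadratic form in $(s,t)$ that is everywhere $\le 0$, the symmetric bilinear form $\beta(\nu,\rho):=\tfrac12\big(\langle W\nu,M\rho\rangle+\langle W\rho,M\nu\rangle\big)$ is negative semi-definite. If $M(\nu)=0$ then $\beta(\nu,\nu)=\langle W\nu,M\nu\rangle=0$, and a null vector of a semi-definite form lies in its radical, so $\beta(\nu,\rho)=0$ for all $\rho$; since $\langle W\rho,M\nu\rangle=0$ this gives $\langle W(\nu),M(\rho)\rangle=0$ for all $\rho$. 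As $M(\delta_x-\delta_y)=x-y$ with $x,y\in D$ and $D$ is open, $M(\rho)$ exhausts $\mathbb{R}^n$, so $W(\nu)=0$. Hence $M(\nu)=0\Rightarrow W(\nu)=0$. Applying this to $\nu=\delta_x+\delta_y-\delta_z-\delta_{x+y-z}$ yields $w(x)+w(y)=w(z)+w(x+y-z)$ whenever the four points lie in $D$; localizing about a base point $x_0\in D$ and setting $\widehat w(h):=w(x_0+h)-w(x_0)$, this is Cauchy's equation $\widehat w(h_1+h_2)=\widehat w(h_1)+\widehat w(h_2)$ on a small ball, so by continuity $w$ is affine on $D$, say $w(x)=-Ax+c$ with $A$ symmetric (since $w=\nabla\log p$).

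To finish, $\nabla\log p(x)=-Ax+c$ forces $\log p(x)=-\tfrac12 x^{\mathrm T}Ax+c^{\mathrm T}x+d$ on each connected component of $D$; as the right-hand side is positive and continuous on all of $\mathbb{R}^n$ while $p$ vanishes on $\partial D$, no component can have a finite boundary point, so $D=\mathbb{R}^n$ and $p(x)=e^{-\frac12 x^{\mathrm T}Ax+c^{\mathrm T}x+d}$. Integrability ($\mathbb{E}(|X|^2)<\infty$) forces $A$ positive definite, so $X$ is normal with mean $A^{-1}c$ and covariance $A^{-1}$. The $m=1$ instance of the displayed inequality, $\langle w(x),x\rangle\le 0$ for all $x\in\mathbb{R}^n$, i.e.\ $-x^{\mathrm T}Ax+c^{\mathrm T}x\le 0$, evaluated at $x=tc$, divided by $t>0$ and sent $t\to 0^+$, gives $|c|^2\le 0$; hence $c=0$ and $X\sim N(0,A^{-1})$.

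The one genuinely delicate point I anticipate is the regularity bookkeeping around the mollification and the step $D=\mathbb{R}^n$: with only differentiability of $p$ assumed, the pointwise inequality and the functional equation for $w$ must be shown to hold on a large enough set, which is handled by passing to Lebesgue points of $\nabla p$ and invoking that the end result presents $p$ as a smooth density. The polarization argument --- the heart of the proof --- is by contrast short and entirely elementary.
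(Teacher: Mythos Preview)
Your proof is correct and takes a genuinely different route from the paper's. Both arguments begin identically---integration by parts and mollification against finite sums of bump functions---to reduce (\ref{eq:-36}) to a pointwise inequality involving $\nabla p$ and $xp(x)$ at finitely many points of $D=\{p>0\}$. From there they diverge. The paper fixes $n-1$ auxiliary points $u_1,\dots,u_{n-1}$ chosen so that a certain $(n-1)\times(n-1)$ matrix is invertible, then solves a linear system in the weights $b_k$ to annihilate coordinates $2,\dots,n$ of the ``$M$-side''; this collapses the $n$-term sum to a single term and reduces to the one-dimensional Lemma~\ref{lem:-2}, which outputs $\partial_i\log p(x)=-\sum_k c_{ik}x_k$ directly (so zero mean falls out automatically). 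Your approach instead treats $\langle W(\nu),M(\nu)\rangle\le 0$ as a negative semi-definite quadratic form on the space of finitely supported signed measures, polarizes, and uses the null-vector-in-radical property to obtain $\ker M\subseteq\ker W$; you then extract Cauchy's functional equation for $w=\nabla\log p$ from a four-point measure. The polarization argument is a clean, coordinate-free substitute for the paper's explicit dimension reduction and its Lemma~\ref{lem:-2}.

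One simplification is available to you: once you have $\ker M\subseteq\ker W$, linear algebra gives a linear map $L:\mathbb{R}^n\to\mathbb{R}^n$ with $W=L\circ M$, and evaluating on $\nu=\delta_x$ yields $w(x)=Lx$ directly. This shows $w$ is \emph{linear} (not merely affine) on all of $D$ at once, so your Cauchy-equation localization and the separate $c=0$ step become unnecessary. Your flagged regularity caveat about the mollification limit is shared by the paper's proof, which likewise passes to $\varepsilon\to 0$ assuming pointwise convergence of $\nabla p$ under convolution.
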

\begin{proof}
By (\ref{eq:-36}) and integrating by parts,
\begin{equation}
\sum_{i=1}^{n}\int f(x)\partial_{i}p(x)dx\cdot\int f(x)x_{i}p(x)dx\le0,\label{eq:-37}
\end{equation}
for any smooth function $f$ with compact support $\text{supp}(f)\subseteq\text{supp}(p)^{o}$.
Let $u_{k}=(u_{1k},\dots,u_{nk})\in\text{supp}(p)$, $k=1,\dots,n-1$
be $n-1$ (fixed) points such that the matrix
\[
U_{1}=\left[\begin{array}{cccc}
u_{21} & u_{22} & \cdots & u_{2,n-1}\\
u_{31} & u_{32} & \cdots & u_{3,n-1}\\
\cdots & \cdots & \cdots & \cdots\\
u_{n1} & u_{n2} & \cdots & u_{n,n-1}
\end{array}\right],
\]
is non-singular. Let $\phi(x)$ be a smooth function supported in
the unit ball $\{x:|x|<1\}$ and $\int\phi(x)dx=1$. Let $\phi_{\epsilon}(x)=\epsilon^{-n}\phi(x/\epsilon)$
for $\epsilon>0$. For any $x_{1}=(x_{11},\dots,x_{n1}),x_{2}=(x_{12},\dots,x_{n2})\in\text{supp}(p)$
and any $a_{1},a_{2},b_{1},\dots,b_{n-1}\in\mathbb{R}$, setting
\[
f(x)=\sum_{j=1}^{2}a_{j}\phi_{\epsilon}(x_{j}-x)+\sum_{k=1}^{n-1}b_{k}\phi_{\epsilon}(u_{k}-x),
\]
in (\ref{eq:-37}) and letting $\epsilon\to0$ gives
\begin{equation}
\sum_{i=1}^{n}\Big(\sum_{j=1}^{2}a_{j}\partial_{i}p(x_{j})+\sum_{k=1}^{n-1}b_{k}\partial_{i}p(u_{k})\Big)\Big(-\sum_{j=1}^{2}a_{j}x_{ij}p(x_{j})-\sum_{k=1}^{n-1}b_{k}u_{ik}p(u_{k})\Big)\ge0.\label{eq:-38}
\end{equation}
Now let $(b_{1},\dots,b_{n-1})$ be the solution of the system
\begin{equation}
\sum_{j=1}^{2}a_{j}x_{ij}p(x_{j})+\sum_{k=1}^{n-1}b_{k}u_{ik}p(u_{k})=0,\quad i=2,\dots,n,\label{eq:-40}
\end{equation}
that is,
\[
\left[\begin{array}{c}
b_{1}\\
b_{2}\\
\dots\\
b_{n-1}
\end{array}\right]=-P^{-1}\left[\begin{array}{cc}
x_{21}p(x_{1}) & x_{22}p(x_{2})\\
x_{31}p(x_{1}) & x_{32}p(x_{2})\\
\dots & \dots\\
x_{n1}p(x_{1}) & x_{n2}p(x_{2})
\end{array}\right]\left[\begin{array}{c}
a_{1}\\
a_{2}
\end{array}\right],
\]
where 
\[
P=U_{1}\text{diag}[p(u_{1}),p(u_{2}),\dots,p(u_{n-1})].
\]
By (\ref{eq:-38}) and (\ref{eq:-40}),
\begin{equation}
\Big[\sum_{j=1}^{2}a_{j}\Big(\partial_{1}p(x_{j})-\sum_{k=2}^{n}\alpha_{k}x_{kj}p(x_{j})\Big)\Big]\Big[\sum_{j=1}^{2}a_{j}\Big(-x_{1j}p(x_{j})+\sum_{k=2}^{n}\beta_{k}x_{kj}p(x_{j})\Big)\Big]\ge0,\label{eq:-39}
\end{equation}
for any $a_{1},a_{2}\in\mathbb{R}$, where 
\[
(\alpha_{2},\dots,\alpha_{n})=(\partial_{1}p(u_{1}),\dots,\partial_{1}p(u_{n-1}))P^{-1},
\]
and 
\[
(\beta_{2},\dots,\beta_{n})=(u_{11}p(u_{1}),\dots,u_{1,n-1}p(u_{n-1}))P^{-1}.
\]
By (\ref{eq:-39}) and Lemma \ref{lem:-2},
\[
\dfrac{\partial_{1}p(x_{1})-\sum_{k=2}^{n}\alpha_{k}x_{k1}p(x_{1})}{-x_{11}p(x_{1})+\sum_{k=2}^{n}\beta_{k}x_{k1}p(x_{1})}=\dfrac{\partial_{1}p(x_{2})-\sum_{k=2}^{n}\alpha_{k}x_{k2}p(x_{2})}{-x_{12}p(x_{2})+\sum_{k=2}^{n}\beta_{k}x_{k2}p(x_{2})}.
\]
Therefore, there exists a constant $c$ such that
\[
\dfrac{\partial_{1}p(x)-\sum_{k=2}^{n}\alpha_{k}x_{k}p(x)}{-x_{1}p(x)+\sum_{k=2}^{n}\beta_{k}x_{k}p(x)}=c,\quad x\in\text{supp}(p).
\]
Equivalently, there exist constants $c_{11},\dots,c_{1n}$ such that
\[
\partial_{1}p(x)=-\sum_{k=1}^{n}c_{1k}x_{k}p(x),\quad x\in\text{supp}(p).
\]
Similarly, there exist constants $c_{ij},1\le i,j\le n$ such that
\begin{equation}
\partial_{i}p(x)=-\sum_{k=1}^{n}c_{ik}x_{k}p(x),\quad i=1,2,\dots,n,\label{eq:-41}
\end{equation}
for any $x\in\text{supp}(p)$. Solving the differential equation system
(\ref{eq:-41}) gives that $p(x)=\lambda e^{-\frac{1}{2}x^{\text{T}}Cx}$.
Now Example \ref{exa:} and (\ref{eq:-61}) imply that $\partial p\not=0$;
that is $C\not=0$. It follows from $\int p(x)dx=1$ that $C$ is
positive definite and $\lambda=[2\pi\det(C^{-1})]^{-n/2}$. Therefore,
$X$ is a normal distribution with zero mean.
\end{proof}

\section{\label{sec:-5}Appendix 2}

We give a proof of Lemma \ref{lem:-4} in Section \ref{sec:-3}.
\begin{proof}[Proof of Lemma \ref{lem:-4}]
Equation (\ref{eq:-73}) follows readily from the fact that $\mathbb{E}[g(X(1),X(2))X_{i}(k)X_{j}(l)]=0$
if $k\not=l$ or any $k>2$ or $l>2$. For (\ref{eq:-74}), by $\mathbb{E}(X_{i}X_{j})=\delta_{ij}$
and (\ref{eq:-73}),
\[
\begin{aligned} & \mathbb{E}[g(X(1),X(2))(\bar{\Sigma}-I)_{ij}]\\
 & =\frac{1}{N}\sum_{k=1}^{N}\mathbb{E}[g(X(1),X(2))(X_{i}(k)X_{j}(k)-\delta_{ij})]-\mathbb{E}[g(X(1),X(2))\bar{X}_{i}\bar{X}_{j}]\\
 & =\frac{1}{N}\sum_{k=1}^{2}\mathbb{E}[g(X(1),X(2))(X_{i}(k)X_{j}(k)-\delta_{ij})]-\frac{1}{N}\mathbb{E}[g(X(1),X(2))]\delta_{ij}+\mathrm{O}(N^{-2})\\
 & =\frac{1}{N}\sum_{k=1}^{2}\mathbb{E}[g(X(1),X(2))X_{i}(k)X_{j}(k)]-\frac{3}{N}\mathbb{E}[g(X(1),X(2))]\delta_{ij}+\mathrm{O}(N^{-2}).
\end{aligned}
\]
For (\ref{eq:-75}), by $\bar{\Sigma}-I=\frac{1}{N}\sum_{k=1}^{N}X(k)X(k)^{\text{T}}-I+\text{O}(N^{-1})$
and $\bar{X}=\text{O}(N^{-1/2})$,
\[
\begin{aligned} & \mathbb{E}[g(X(1),X(2))(\bar{\Sigma}-I)_{ij}\bar{X}_{p}]\\
 & =\frac{1}{N^{2}}\sum_{1\le k,l\le N}\mathbb{E}[g(X(1),X(2))(X_{i}(k)X_{j}(k)-\delta_{ij})X_{p}(l)]+\text{O}(N^{-3/2})\\
 & =\frac{1}{N^{2}}\sum_{1\le k,l\le2}\mathbb{E}[g(X(1),X(2))(X_{i}(k)X_{j}(k)-\delta_{ij})X_{p}(l)]\\
 & \quad+\frac{1}{N^{2}}\sum_{2<k\le N}\mathbb{E}[g(X(1),X(2))(X_{i}(k)X_{j}(k)-\delta_{ij})X_{p}(k)]+\text{O}(N^{-3/2})\\
 & =\frac{N-2}{N^{2}}\mathbb{E}[g(X(1),X(2))]\mathbb{E}[(X_{i}(k)X_{j}(k)-\delta_{ij})X_{p}(k)]+\text{O}(N^{-3/2})\\
 & =\frac{1}{N}\mathbb{E}[g(X(1),X(2))]\mathbb{E}(X_{i}X_{j}X_{p})+\mathrm{O}(N^{-3/2}).
\end{aligned}
\]
Similarly, for (\ref{eq:-76}),
\[
\begin{aligned} & \mathbb{E}[g(X(1),X(2))(\bar{\Sigma}-I)_{ij}(\bar{\Sigma}-I)_{pq}]\\
 & =\frac{1}{N^{2}}\sum_{1\le k,l\le N}\mathbb{E}[g(X(1),X(2))(X_{i}(k)X_{j}(k)-\delta_{ij})(X_{p}(l)X_{q}(l)-\delta_{pq})]+\text{O}(N^{-3/2})\\
 & =\frac{1}{N^{2}}\sum_{1\le k,l\le2}\mathbb{E}[g(X(1),X(2))(X_{i}(k)X_{j}(k)-\delta_{ij})(X_{p}(l)X_{q}(l)-\delta_{pq})]\\
 & \quad+\frac{1}{N^{2}}\sum_{2<k\le N}\mathbb{E}[g(X(1),X(2))(X_{i}(k)X_{j}(k)-\delta_{ij})(X_{p}(k)X_{q}(k)-\delta_{pq})]+\text{O}(N^{-3/2})\\
 & =\frac{N-1}{N^{2}}\mathbb{E}[g(X(1),X(2))]\mathbb{E}[(X_{i}X_{j}-\delta_{ij})(X_{p}X_{q}-\delta_{pq})]+\text{O}(N^{-3/2})\\
 & =\frac{1}{N}\mathbb{E}[g(X(1),X(2))][\mathbb{E}(X_{i}X_{j}X_{p}X_{q})-\delta_{ij}\delta_{pq}]+\text{O}(N^{-3/2}).
\end{aligned}
\]

Suppose in addition that $\mathbb{E}[g(X,y)]=0$ for any $y\in\mathbb{R}^{n}$.
By the central limit theorem,
\begin{equation}
\frac{1}{N-1}\sum_{k=2}^{N}g(X(1),X(k))=\text{O}(N^{-1/2}).\label{eq:-78}
\end{equation}
Note that, by $\bar{\Sigma}=I+\text{O}(N^{-1/2})$ and $\bar{\Sigma}^{1/2}=I+\text{O}(N^{-1/2})$,
\begin{equation}
\bar{\Sigma}^{1/2}-I=(\bar{\Sigma}^{1/2}+I)^{-1}(\bar{\Sigma}-I)=\frac{1}{2}(\bar{\Sigma}-I)+\text{O}(N^{-1}).\label{eq:-79}
\end{equation}
Therefore, by (\ref{eq:-78}) and (\ref{eq:-79}),
\begin{equation}
\frac{1}{N-1}\sum_{k=2}^{N}g(X(1),X(k))(\bar{\Sigma}^{1/2}-I)=\frac{1}{2(N-1)}\sum_{k=2}^{N}g(X(1),X(k))(\bar{\Sigma}-I)+\text{O}_{2}(N^{-3/2}).\label{eq:-80}
\end{equation}
Moreover, by symmetry and (\ref{eq:-80}),
\[
\begin{aligned} & \mathbb{E}[g(X(1),X(2))(\bar{\Sigma}^{1/2}-I)_{ij}]\\
 & =\frac{1}{N-1}\sum_{k=2}^{N}\mathbb{E}[g(X(1),X(k))(\bar{\Sigma}^{1/2}-I)_{ij}]\\
 & =\frac{1}{2(N-1)}\sum_{k=2}^{N}\mathbb{E}[g(X(1),X(k))(\bar{\Sigma}-I)_{ij}]+\text{O}(N^{-3/2})\\
 & =\frac{1}{2}\mathbb{E}[g(X(1),X(2))(\bar{\Sigma}-I)_{ij}]+\text{O}(N^{-3/2}).
\end{aligned}
\]
Equation (\ref{eq:-77}) now follows easily from (\ref{eq:-74}) and
$\mathbb{E}[g(X(1),X(2))]=0$.
\end{proof}

\end{document}